\documentclass[11pt]{amsart}

% begin preamble

\usepackage{latexsym}
\usepackage{amsmath}
\usepackage{amsthm}
\usepackage{amssymb}
\usepackage{amsfonts}
\usepackage{comment}

\usepackage{mathrsfs}
\usepackage[ocgcolorlinks, linkcolor=blue]{hyperref}

\usepackage{calc}
\newenvironment{tehtratk}%
             {\begin{list}{\arabic{enumi}.}{\usecounter{enumi}%
              \setlength{\labelsep}{0.5em}%
              \settowidth{\labelwidth}{\arabic{enumi}.}%
              \setlength{\leftmargin}{\labelwidth+\labelsep}}}%
             {\end{list}}

             {\begin{list}{$\bullet$}{\usecounter{enumi}%
              \setlength{\labelsep}{0.5em}%
              \settowidth{\labelwidth}{*}%
              \setlength{\leftmargin}{\labelwidth+\labelsep}}}%
             {\end{list}}

\DeclareRobustCommand{\SkipTocEntry}[5]{}

\newcommand{\mR}{\mathbb{R}}                    % Formatting for R
                    % Formatting for C
                    % Formatting for Z
                    % Formatting for N
\newcommand{\abs}[1]{\lvert #1 \rvert}          % Formatting for the absolute value
\newcommand{\norm}[1]{\lVert #1 \rVert}         % Formatting for the norm
     % Formatting for the inner product
         % Formatting for the inner product

\newcommand{\ol}[1]{\overline{#1}}

\newcommand{\etilde}{\,\tilde{\rule{0pt}{6pt}}\,}

         % Formatting for the biconditional

\newcommand{\im}{\mathrm{Im}}

\newcommand{\mF}{\mathscr{F}}

\newcommand{\p}{\partial}
\newcommand{\closure}[1]{\overline{#1}}
\newcommand{\dbar}{\overline{\partial}}

\newcommand{\mdiv}{\mathrm{div}}

\def \vd{\overset{\tt{v}}{\nabla}}
\def \hd{\overset{\tt{h}}{\nabla}}

\newcommand{\eps}{\varepsilon}

\linespread{1.1}

\newtheorem{theorem}{Theorem}[section]

\newtheorem{lemma}[theorem]{Lemma}

\newtheorem{proposition}[theorem]{Proposition}

\newtheorem{question}{Question}[section]

\theoremstyle{definition}
\newtheorem*{definition}{Definition}
\newtheorem{example}[theorem]{Example}

\newtheorem{exercise}{Exercise}[section]
\newtheorem{remark}[theorem]{Remark}

\numberwithin{equation}{section}

%\theoremstyle{definition}
%\newtheorem{thm}{Theorem}[section]
%\newtheorem{claim}{Claim}[section]
%\newtheorem{prop}[thm]{Proposition}
%\newtheorem{cor}[thm]{Corollary}
%\newtheorem{lemma}[thm]{Lemma}
%\newtheorem*{definition}{Definition}
%\newtheorem{example}{Example}[section]
%\newtheorem{question}{Question}
%\newtheorem*{remark}{Remark}

% end preamble

\begin{document}

\title{On geometric inverse problems and microlocal analysis}
\author{Mikko Salo}
\address{Department of Mathematics and Statistics, University of Jyv\"askyl\"a}
\email{mikko.j.salo@jyu.fi}

\begin{abstract}
This work gives an expository account of certain applications of microlocal analysis in three geometric inverse problems. We will discuss the geodesic X-ray transform inverse problem, the Gelfand problem for the wave equation on a Riemannian manifold, and the Calder\'on problem for the Laplace equation on a Riemannian manifold.
\end{abstract}

\maketitle

\tableofcontents

%\mainmatter

%\chapter*{Preface}

\section{Introduction}

In these notes we will describe certain applications of microlocal analysis in geometric inverse problems. We will discuss the following important problems:
\begin{enumerate}
\item[1.]
determining a function from its integrals over maximal geodesics (geodesic X-ray transform inverse problem);
\item[2.] 
determining the sound speed in a domain from boundary measurements of solutions of the wave equation (Gelfand problem);
\item[3.] 
determining the electrical conductivity in a domain from voltage and current measurements on its boundary (Calder\'on problem).
\end{enumerate}
In earlier lecture notes published in \cite{Salo_lectures} we discussed these problems from a microlocal point of view when the background geometry was Euclidean. In these notes we will consider geometric versions of these problems, in the more general setting of a Riemannian background geometry. This material is based on lecture notes for two online minicourses, one organized at DTU, Copenhagen, in January 2021 and another at CRM, Montreal, in August 2021.

Microlocal analysis arises in various ways in the above problems. Here are a few examples:
\begin{enumerate}
\item[1.] 
{\bf X-ray transform:} the X-ray transform is a Fourier integral operator (FIO), and under certain conditions its normal operator is an elliptic pseudodifferential operator ($\Psi$DO). Microlocal analysis can be used to predict which sharp features (singularities) of the image can be reconstructed in a stable way from measurements. %Microlocal analysis is also a powerful tool in the study of geodesic X-ray transforms related to seismic imaging applications.
\item[2.] 
{\bf Gelfand problem:} the boundary measurement map (hyperbolic Dirichlet-to-Neumann map) is an FIO. The scattering relation of the sound speed as well as X-ray transforms of the coefficients can be computed from the canonical relation and the symbol of this FIO.
\item[3.] 
{\bf Calder\'on problem:} the boundary measurement map (Dirichlet-to-Neumann map) is a $\Psi$DO. The boundary values of the conductivity as well as its derivatives can be computed from the symbol of this $\Psi$DO.
\end{enumerate}

The above inverse problems are already relevant in Euclidean space. However, the strength of microlocal methods becomes more apparent in \emph{geometric}, or \emph{non-Euclidean}, settings. For X-ray transform problems this will mean that functions are integrated over geodesics instead of straight lines. For Gelfand or Calder\'on type problems this will mean that domains in $\mR^n$ are replaced by more general geometric spaces.

A particularly clean setting, which is still relevant for several applications, is the one where domains in $\mR^n$ are replaced by Riemannian manifolds and straight lines are replaced by geodesic curves of a smooth Riemannian metric. We will focus on this setting and formulate our questions on compact Riemannian manifolds $(M,g)$ with smooth boundary, although Euclidean space will be used for examples. We also remark that analogous \emph{inverse scattering problems} could be considered on noncompact manifolds. %This corresponds to working with compactly supported functions in the Radon transform problem.

For reading these notes we assume familiarity with basic Riemannian geometry, partial differential equations, and pseudodifferential operators roughly at the level of the textbooks \cite{Lee} and \cite{Folland}.

\addtocontents{toc}{\SkipTocEntry}
\subsection*{Notation}

In these notes $M$ will always be a compact, oriented, smooth ($=C^{\infty}$) manifold with smooth boundary, and $g$ will be a smooth Riemannian metric on $M$. We assume that $n = \dim(M) \geq 2$. We write $\langle \,\cdot\,, \,\cdot\, \rangle_g$ and $\abs{\,\cdot\,}_g$ for the $g$-inner product and norm on tangent vectors. In local coordinates we write $g = (g_{jk})_{j,k=1}^n$, and $(g^{jk})$ is the inverse matrix of $(g_{jk})$. Thus if $x = (x_1, \ldots, x_n)$ are local coordinates and if $\p_j = \frac{\p}{\p x_j}$ are the corresponding coordinate vector fields, then $g_{jk} = \langle \p_j, \p_k \rangle_g$ and 
\[
\langle X^j \p_j, Y^k \p_k \rangle_g = g_{jk} X^j Y_k, \qquad \abs{X^j \p_j}_g = (g_{jk} X^j X^k)^{1/2}.
\]
Here and below we use the Einstein summation convention that a repeated upper and lower index is summed from $1$ to $n$ (i.e.\ we omit the sum signs).

We denote by $\nabla_g = \mathrm{grad}_g$ and by $\mathrm{div}_g$ the Riemannian gradient and divergence on $M$. The Laplace-Beltrami operator is $\Delta_g = \mathrm{div}_g \nabla_g$. 
In local coordinates one has the formulas 
\begin{align*}
\nabla_g u &= g^{jk} \p_j u \p_k, \\
\mathrm{div}_g(X^j \p_j) &= \det(g)^{-1/2} \p_j(\det(g)^{1/2} X^j), \\
\Delta_g u &= \det(g)^{-1/2} \p_j(\det(g)^{1/2} g^{jk} \p_k u).
\end{align*}
We denote the volume form on $(M,g)$ by $dV_g$, and the induced volume form on $\p M$ by $dS_g$. If $u, v \in C^{\infty}(M)$, one has the integration by parts (or Green) formula 
\[
\int_{\p M} (\p_{\nu} u) v \,dS_g = \int_M ((\Delta_g u) v + \langle \nabla_g u, \nabla_g v \rangle_g) \,dV_g
\]
where $\nu$ is the outer unit normal vector to $\p M$, and $\p_{\nu} u = \langle \nabla_g u, \nu \rangle_g|_{\p M}$ is the normal derivative on $\p M$.

We note that we sometimes drop the subindex $g$ for brevity and write $\abs{v}$ instead of $\abs{v}_g$ etc. All geodesics are assumed to have unit speed, i.e.\ to satisfy $\abs{\dot{\gamma}(t)}_g = 1$.

\addtocontents{toc}{\SkipTocEntry}
\subsection*{Acknowledgements}

This material is based on lecture notes for two online minicourses, one at the DTU PhD Winter School in January 2021 and another at the CRM S\'eminaire de Math\'ematiques Sup\'erieures ``Microlocal analysis: theory and applications'' in August 2021. The author would like to thank the organizers of these schools at DTU and CRM, in particular Kim Knudsen, Katya Krupchyk and Suresh Eswarathasan, for the opportunity to give these lectures. The author would also like to thank all the students who attended the online courses for their questions and comments that have improved the presentation. The author is partly supported by the Academy of Finland (Centre of Excellence in
Inverse Modelling and Imaging, grant 353091) and by the European Research Council under Horizon 2020 (ERC CoG 770924).

\section{Geodesic X-ray transform} \label{section_ray_transform}

In this section we discuss the geodesic X-ray transform, which generalizes the classical X-ray (or Radon) transform in Euclidean space. %We will prove that the geodesic X-ray transform is injective on compact \emph{simple} manifolds.

\subsection{Radon transform in $\mR^2$}

To set the stage, we review a few facts about the classical Radon transform following \cite[Chapter 1]{PSU_book}. See \cite{Helgason, Natterer} for further information on the Radon transform.

The \emph{X-ray transform} $If$ of a function $f$ in $\mR^n$ encodes the integrals of $f$ over all straight lines, whereas the \emph{Radon transform} $Rf$ encodes the integrals of $f$ over $(n-1)$-dimensional planes. We will focus on the case $n=2$, where the two transforms coincide. This transform appears naturally in medical imaging in X-ray computed tomography (CT) and positron emission tomography (PET).

There are many ways to parametrize the set of lines in $\mR^2$. We will parametrize lines by their direction vector $\omega$ and signed distance $s$ from the origin.

\begin{definition}
If $f \in C^{\infty}_c(\mR^2)$, the \emph{Radon transform} of $f$ is the function 
\begin{equation*}
Rf(s,\omega) := \int_{-\infty}^{\infty} f(s\omega^{\perp} + t\omega) \,dt, \quad s \in \mR, \ \omega \in S^1.
\end{equation*}
Here $\omega^{\perp}$ is the vector in $S^1$ obtained by rotating $\omega$ counterclockwise by $90^{\circ}$.
\end{definition}

There is a well-known relation between $Rf$ and the Fourier transform $\hat{f}$. We use the convention 
\[
\hat{f}(\xi) = \mF f(\xi) = \int_{\mR^n} e^{-ix \cdot \xi} f(x) \,dx.
\]
We denote by $(Rf)\etilde(\,\cdot\,,\omega)$ the Fourier transform of $Rf$ with respect to $s$.

\begin{theorem}[Fourier slice theorem] \label{thm_fourier_slice}
\begin{equation*}
(Rf)\etilde(\sigma,\omega) = \hat{f}(\sigma \omega^{\perp}).
\end{equation*}
\end{theorem}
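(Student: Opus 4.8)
The plan is to compute the one-dimensional Fourier transform of $Rf(\,\cdot\,,\omega)$ directly from the definition and recognize the result as a restriction of $\hat f$. First I would fix $\omega \in S^1$ and write out
\[
(Rf)\etilde(\sigma,\omega) = \int_{-\infty}^{\infty} e^{-is\sigma} Rf(s,\omega)\,ds = \int_{-\infty}^{\infty} \int_{-\infty}^{\infty} e^{-is\sigma} f(s\omega^{\perp} + t\omega)\,dt\,ds.
\]
Since $f \in C^{\infty}_c(\mR^2)$, the integrand is smooth and compactly supported in $(s,t)$, so Fubini's theorem applies and there is no convergence issue.

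The key step is the change of variables $(s,t) \mapsto x = s\omega^{\perp} + t\omega$. Because $\{\omega^{\perp},\omega\}$ is an orthonormal basis of $\mR^2$, this is a rotation, hence has Jacobian determinant $1$ and $dt\,ds = dx$. Moreover, under this substitution $s = x \cdot \omega^{\perp}$, so the phase becomes $e^{-is\sigma} = e^{-i x \cdot (\sigma\omega^{\perp})}$. Therefore
\[
(Rf)\etilde(\sigma,\omega) = \int_{\mR^2} e^{-i x \cdot (\sigma\omega^{\perp})} f(x)\,dx = \hat f(\sigma\omega^{\perp}),
\]
which is exactly the claimed identity.

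There is essentially no hard part here: the only things to be careful about are that the parametrization $x = s\omega^{\perp} + t\omega$ is indeed an orthogonal (hence measure-preserving) change of coordinates, and that the compact support of $f$ legitimizes the use of Fubini. The one conceptual point worth emphasizing is that $s$ appears in the exponential precisely because $s = \langle x, \omega^{\perp}\rangle_g$ is the signed distance from the origin to the line in direction $\omega$ passing through $x$, which is why the Fourier variable dual to $s$ lands on the line $\mR\omega^{\perp}$ in frequency space.
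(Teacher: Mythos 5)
Your proof is correct and follows exactly the same route as the paper: expand $(Rf)\etilde$ from the definitions, use Fubini, change to the variable $x = s\omega^{\perp} + t\omega$ (an orthogonal, hence measure-preserving, change of coordinates with $s = x\cdot\omega^{\perp}$), and recognize the result as $\hat f(\sigma\omega^{\perp})$. You simply spell out the justifications (compact support for Fubini, Jacobian $1$) that the paper leaves implicit.
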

\begin{proof}
Parametrizing $\mR^2$ by $y = s\omega^{\perp} + t\omega$, we have 
\begin{align*}
(Rf)\etilde(\sigma,\omega) &= \int_{-\infty}^{\infty} e^{-i\sigma s} \left[ \int_{-\infty}^{\infty} f(s\omega^{\perp} + t\omega) \,dt \right] \,ds = \int_{\mR^2} e^{-i\sigma y \cdot \omega^{\perp}} f(y) \,dy \\
 &= \hat{f}(\sigma \omega^{\perp}). \qedhere
\end{align*}
\end{proof}

This already proves that the Radon transform $Rf$ uniquely determines $f$:

\begin{theorem}[Uniqueness] \label{corollary_radon_injectivity}
If $f \in C^{\infty}_c(\mR^2)$ is such that $Rf \equiv 0$, then $f \equiv 0$.
\end{theorem}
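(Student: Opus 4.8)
The plan is to deduce injectivity directly from the Fourier slice theorem (Theorem \ref{thm_fourier_slice}). Suppose $f \in C^\infty_c(\mR^2)$ satisfies $Rf \equiv 0$. Then for every $\omega \in S^1$ and every $s \in \mR$ we have $Rf(s,\omega) = 0$, hence its Fourier transform in the first variable vanishes as well: $(Rf)\etilde(\sigma,\omega) = 0$ for all $\sigma \in \mR$ and all $\omega \in S^1$. By the Fourier slice theorem this says $\hat f(\sigma\omega^\perp) = 0$ for all $\sigma \in \mR$, $\omega \in S^1$.

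The next step is to observe that the set $\{\sigma\omega^\perp : \sigma \in \mR,\ \omega \in S^1\}$ is all of $\mR^2$: any $\xi \in \mR^2$ can be written as $\sigma\omega^\perp$ by taking $\sigma = \abs{\xi}$ and choosing $\omega \in S^1$ so that $\omega^\perp = \xi/\abs{\xi}$ (and $\xi = 0$ is covered by $\sigma = 0$). Therefore $\hat f \equiv 0$ on $\mR^2$. Finally, since $f \in C^\infty_c(\mR^2) \subset L^1(\mR^2)$, Fourier inversion (or injectivity of the Fourier transform on $L^1$, using that $\hat f$ is continuous and identically zero) gives $f \equiv 0$.

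There is essentially no obstacle here: the only point requiring a word of care is the justification that $Rf \equiv 0$ implies $(Rf)\etilde \equiv 0$, which is immediate since the Fourier transform of the zero function is zero, and the fact that $Rf(\,\cdot\,,\omega)$ is a nice enough function of $s$ (it is smooth and compactly supported, as $f$ is) for the Fourier slice theorem to apply. The substantive content has already been packaged into Theorem \ref{thm_fourier_slice}, so this corollary is just the remark that a function is determined by the values of $\hat f$ on all lines through the origin, which exhaust $\mR^2$.
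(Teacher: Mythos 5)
Your argument is correct and is exactly the paper's proof: apply the Fourier slice theorem to conclude $\hat f \equiv 0$ and then invoke Fourier inversion. You have merely spelled out the (routine) observation that $\{\sigma\omega^\perp : \sigma\in\mR,\ \omega\in S^1\}$ exhausts $\mR^2$, which the paper leaves implicit.
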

\begin{proof}
If $Rf \equiv 0$, then  $\hat{f} \equiv 0$ by Theorem \ref{thm_fourier_slice} and consequently $f \equiv 0$  by the Fourier inversion theorem.
\end{proof}

To obtain a different inversion method, and for later purposes, we will consider the adjoint of $R$. The formal adjoint of $R$ is the \emph{backprojection operator}\footnote{The formula for $R^*$ is obtained as follows: if $f \in C^{\infty}_c(\mR^2)$, $h \in C^{\infty}(\mR \times S^1)$ one has  
\begin{align*}
(Rf, h)_{L^2(\mR \times S^1)} &= \int_{-\infty}^{\infty} \int_{S^1} Rf(s,\omega) \ol{h(s,\omega)} \,d\omega \,ds \\
 &= \int_{-\infty}^{\infty} \int_{S^1} \int_{-\infty}^{\infty} f(s\omega^{\perp} + t\omega) \ol{h(s,\omega)} \,dt \,d\omega \,ds \\
 &= \int_{\mR^2} f(y) \left( \int_{S^1} \ol{h(y \cdot \omega^{\perp}, \omega)} \,d\omega \right) \,dy.
\end{align*}}
 \begin{equation*}
R^*: C^{\infty}(\mR \times S^1) \to C^{\infty}(\mR^2), \ \ R^* h(y) = \int_{S^1} h(y \cdot \omega^{\perp}, \omega) \,d\omega.
\end{equation*}

The following result shows that the normal operator $R^* R$ is a classical elliptic $\Psi$DO of order $-1$ in $\mR^2$, and also gives an inversion formula.

\begin{theorem} \label{theorem_normal_operator_radon}
(Normal operator) One has 
\begin{equation*}
R^* R = 4\pi \abs{D}^{-1} = \mF^{-1} \left\{ \frac{4\pi}{\abs{\xi}} \mF(\,\cdot\,) \right\},
\end{equation*}
and $f$ can be recovered from $Rf$ by the formula 
\begin{equation*}
f = \frac{1}{4\pi} \abs{D} R^* R f.
\end{equation*}
\end{theorem}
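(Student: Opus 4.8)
The plan is to compute $R^*R$ directly on the Fourier side, feeding the Fourier slice theorem into the definition of the backprojection operator. For $f \in C^{\infty}_c(\mR^2)$ and $y \in \mR^2$, I would first write
\[
R^*Rf(y) = \int_{S^1} Rf(y\cdot\omega^{\perp},\omega)\,d\omega = \frac{1}{2\pi}\int_{S^1}\int_{-\infty}^{\infty} e^{i(y\cdot\omega^{\perp})\sigma}\,\hat{f}(\sigma\omega^{\perp})\,d\sigma\,d\omega,
\]
where the first equality is the definition of $R^*$ and the second uses one-dimensional Fourier inversion in the variable $s$ together with the identity $(Rf)\etilde(\sigma,\omega) = \hat f(\sigma\omega^{\perp})$ from Theorem~\ref{thm_fourier_slice}. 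Since $(y\cdot\omega^{\perp})\sigma = y\cdot(\sigma\omega^{\perp})$, the exponential is $e^{iy\cdot\xi}$ with $\xi = \sigma\omega^{\perp}$, which sets up a change of variables to the frequency variable $\xi$.

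The key step is to pass from the $(\sigma,\omega)$-integral over $\mR\times S^1$ to an integral over $\mR^2$. Splitting the $\sigma$-integral at $0$ and using polar coordinates $\xi = \rho\theta$, $\rho > 0$, $\theta \in S^1$ (with $d\xi = \rho\,d\rho\,d\theta$), the map $(\sigma,\omega)\mapsto\sigma\omega^{\perp}$ becomes a double cover of $\mR^2\setminus\{0\}$, and one gets for suitable integrands $G$ the identity
\[
\int_{S^1}\int_{-\infty}^{\infty} G(\sigma\omega^{\perp})\,d\sigma\,d\omega = 2\int_{S^1}\int_0^{\infty} G(\rho\theta)\,d\rho\,d\theta = 2\int_{\mR^2} \frac{G(\xi)}{\abs{\xi}}\,d\xi.
\]
Applying this with $G(\xi) = e^{iy\cdot\xi}\hat f(\xi)$ gives $R^*Rf(y) = \frac{1}{\pi}\int_{\mR^2} e^{iy\cdot\xi}\,\abs{\xi}^{-1}\hat f(\xi)\,d\xi = 4\pi\,\mF^{-1}\{\abs{\xi}^{-1}\mF f\}(y)$, where the constant $(2\pi)^2$ from the two-dimensional Fourier inversion formula combines with $1/\pi$ to produce $4\pi$. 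This is exactly $R^*R = 4\pi\abs{D}^{-1}$; since $4\pi\abs{\xi}^{-1}$ is a classical elliptic symbol of order $-1$ away from the origin, $R^*R$ is a classical elliptic $\Psi$DO of order $-1$, and the inversion formula $f = \frac{1}{4\pi}\abs{D}R^*Rf$ follows immediately by applying $\abs{D}$ to both sides.

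The points requiring care — all of them routine — are the bookkeeping of the $2\pi$ constants, the use of Fubini to interchange the $\omega$- and $\sigma$-integrals (legitimate since $\hat f$ is Schwartz and $S^1$ is compact), and the observation that $\abs{\xi}^{-1}\hat f(\xi)$ is integrable near $\xi = 0$ precisely because $n = 2$, so that $\abs{D}^{-1}f$ is a well-defined smooth function for $f \in C^{\infty}_c(\mR^2)$. The only genuinely substantive point is the double-covering change of variables, which is exactly where the Fourier slice theorem — a statement about $\hat f$ restricted to lines through the origin — is converted into a statement about the full two-dimensional Fourier transform.
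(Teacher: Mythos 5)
Your proof is correct and follows essentially the same route as the paper: both hinge on the Fourier slice theorem followed by the double-cover change of variables from $(\sigma,\omega) \in \mR \times S^1$ to $\xi = \sigma\omega^{\perp} \in \mR^2$, with identical bookkeeping of the $2\pi$ constants. The only cosmetic difference is that you compute $R^*Rf(y)$ pointwise via one-dimensional Fourier inversion, while the paper pairs against a test function $g$ and invokes Parseval twice — the same fact in quadratic form.
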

\begin{remark}
Above we have written, for $\alpha \in \mR$, 
\[
\abs{D}^{\alpha} f := \mF^{-1} \{ \abs{\xi}^{\alpha} \hat{f}(\xi) \}.
\]
The notation $(-\Delta)^{\alpha/2} = \abs{D}^{\alpha}$ is also used.
\end{remark}
\begin{proof}
The proof is based on computing $(Rf, Rg)_{L^2(\mR \times S^1)}$ using the Parseval identity, Fourier slice theorem, symmetry and polar coordinates:
\begin{align*}
(R^* R f, g)_{L^2(\mR^2)} &= (Rf, Rg)_{L^2(\mR \times S^1)} \\
 &= \int_{S^1} \left[ \int_{-\infty}^{\infty} (Rf)(s,\omega) \ol{(Rg)(s,\omega)} \,ds \right] \,d\omega \\
 &= \frac{1}{2\pi} \int_{S^1} \left[ \int_{-\infty}^{\infty} (Rf)\etilde(\sigma,\omega) \ol{(Rg)\etilde(\sigma,\omega)} \right] \,d\sigma \,d\omega \\
 &= \frac{1}{2\pi} \int_{S^1} \left[ \int_{-\infty}^{\infty}  \hat{f}(\sigma \omega^{\perp})  \ol{\hat{g}(\sigma \omega^{\perp})} \right] \,d\sigma \,d\omega \\
 &= \frac{2}{2\pi} \int_{S^1} \left[ \int_{0}^{\infty}  \hat{f}(\sigma \omega^{\perp}) \ol{\hat{g}(\sigma \omega^{\perp})} \right] \,d\sigma \,d\omega \\
 &= \frac{2}{2\pi} \int_{\mR^2} \frac{1}{\abs{\xi}} \hat{f}(\xi) \ol{\hat{g}(\xi)} \,d\xi \\
 &= ( 4\pi \mF^{-1} \left\{ \frac{1}{\abs{\xi}} \hat{f}(\xi) \right\}, g)_{L^2(\mR^2)}. \qedhere
\end{align*}
\end{proof}

The same argument, based on computing $(\abs{D_s}^{1/2} Rf, \abs{D_s}^{1/2} Rg)_{L^2(\mR \times S^1)}$ instead of $(Rf, Rg)_{L^2(\mR \times S^1)}$, leads to the famous \emph{filtered backprojection} (FBP) inversion formula:
\begin{equation} \label{fbp_formula}
f = \frac{1}{4\pi} R^* \abs{D_s} R f
\end{equation}
where $\abs{D_s} Rf = \mF^{-1} \{ \abs{\sigma} (Rf) \etilde \}$. This formula is efficient to implement and gives good reconstructions when one has complete X-ray data and relatively small noise, and hence FBP (together with its variants) has been commonly used in X-ray CT scanners.

However, if one is mainly interested in the singularities (i.e.\ jumps or sharp features) of the image, it is possible to use the even simpler \emph{backprojection method}: just apply the backprojection operator $R^*$ to the data $Rf$. Since $R^* R$ is an elliptic $\Psi$DO, singularities are recovered:
\begin{gather*}
\mathrm{sing\,supp}(R^* R f) = \mathrm{sing\,supp}(f), \\
\mathrm{WF}(R^* R f) = \mathrm{WF}(f).
\end{gather*}
Here $\mathrm{sing\,supp}(f)$ and $\mathrm{WF}(f)$ are the singular support and wave front set of $f$, respectively (see e.g.\ \cite[Chapter 1.3.2]{PSU_book}). Moreover, since $R^* R$ is a $\Psi$DO of order $-1$, hence smoothing of order $1$, one expects that $R^* R f$ gives a slightly blurred version of $f$ where the main singularities should still be visible. The ellipticity of the normal operator is also important in the analysis of statistical methods for recovering $f$ from $Rf$ \cite{MonardNicklPaternain}.

The interplay between the Radon and Fourier transforms can further be used to study reconstruction algorithms and stability and range properties for the Radon transform inverse problem. The use of the Fourier transform is possible because the Euclidean space $\mR^2$ is highly symmetric, and can be nicely tiled with straight lines. In more general geometric spaces, symmetries and Fourier methods may not be available and one needs to employ different methods. Some of the available methods will be discussed below.

\subsection{Geodesic X-ray transform}

We will now introduce the geodesic X-ray transform following \cite[Chapters 3 and 4]{PSU_book}, see also \cite{Sh}. This transform appears in seismic and ultrasound imaging, e.g.\ as the linearization of the boundary rigidity/inverse kinematic problem. We will see in the later sections that it also arises in the study of inverse problems for partial differential equations.

Let $(M,g)$ be a compact manifold with smooth boundary, assumed to be embedded in a compact manifold $(N,g)$ without boundary. We parametrize geodesics by points in the \emph{unit sphere bundle}, defined by 
\begin{equation*}
SM := \{ (x,v) \,:\, x \in M, \ v \in T_x M, \ \abs{v}_g = 1 \}.
\end{equation*}
We also consider the unit spheres 
\[
S_x M := \{ v \in T_x M \,:\, \abs{v}_g = 1 \}, \qquad x \in M.
\]
If $(x,v) \in SN$ we denote by $\gamma_{x,v}(t)$ the geodesic in $N$ which starts at the point $x$ in direction $v$, that is, 
\begin{equation*}
D_{\dot{\gamma}} \dot{\gamma} = 0, \quad \gamma_{x,v}(0) = x, \quad \dot{\gamma}_{x,v}(0) = v.
\end{equation*}
Here $D$ denotes the Levi-Civita connection induced by $g$. The geodesic equation $D_{\dot{\gamma}} \dot{\gamma} = 0$ reads in local coordinates as 
\[
\ddot{\gamma}^l(t) + \Gamma_{jk}^l(\gamma(t)) \dot{\gamma}^j(t) \dot{\gamma}^k(t) = 0
\]
where $\Gamma_{jk}^l = \frac{1}{2} g^{lm}(\p_j g_{km} + \p_k g_{jm} - \p_m g_{jk})$ are the Christoffel symbols of the metric $g = (g_{jk})_{j,k=1}^n$, and $(g^{jk})$ is the inverse matrix of $(g_{jk})$.

We also denote by $\varphi_t$ the \emph{geodesic flow} on $SN$, 
\[
\varphi_t: SN \to SN, \ \ \varphi_t(x,v) = (\gamma_{x,v}(t), \dot{\gamma}_{x,v}(t)).
\]
If $(x,v) \in SM$ let $\tau(x,v) \in [0,\infty]$ be the first time when $\gamma_{x,v}(t)$ exits $M$, 
\begin{equation*}
\tau(x,v) := \sup \,\{ t \geq 0 \,:\, \gamma_{x,v}([0,t]) \subset M \}.
\end{equation*}
We assume that $(M,g)$ is \emph{nontrapping}, meaning that $\tau(x,v)$ is finite for any $(x,v) \in SM$. (If $\tau(x,v) = \infty$, we say that the geodesic $\gamma_{x,v}$ is \emph{trapped}.)

\begin{definition}
The \emph{geodesic X-ray transform} of a function $f \in C^{\infty}(M)$ is defined by 
\begin{equation*}
If(x,v) := \int_0^{\tau(x,v)} f(\gamma_{x,v}(t)) \,dt, \quad (x,v) \in \partial(SM).
\end{equation*}
\end{definition}

Thus, $If$ encodes the integrals of $f$ over all maximal geodesics in $M$ starting from $\partial M$, such geodesics being parametrized by points of $\partial(SM) = \{ (x,v) \in SM \,:\, x \in \partial M \}$. We note that $I$ can be extended to act on $L^2(M)$ \cite[Proposition 1.4.2]{PSU_book}.

So far we have not imposed any restrictions on the behavior of geodesics in $(M,g)$ other than the nontrapping condition. However, invertibility of the geodesic X-ray transform is only known under certain geometric restrictions. One class of manifolds where such results have been proved is the following.

\begin{definition}
A compact Riemannian manifold $(M,g)$ with smooth boundary is called \emph{simple} if 
\begin{enumerate}
\item[(a)]
its boundary $\partial M$ is strictly convex,
\item[(b)] 
it is nontrapping, and
\item[(c)]
no geodesic has conjugate points.
\end{enumerate}
\end{definition}

We explain briefly the notions appearing in the definition:
\begin{tehtratk}
\item (Strict convexity)
We say that $\p M$ is \emph{strictly convex} if the second fundamental form of $\p M$ in $M$ is positive definite. This implies in particular that any geodesic in $N$ that is tangent to $\partial M$ stays outside $M$ for small positive and negative times. Thus any maximal geodesic going from $\partial M$ into $M$ stays inside $M$ except for its endpoints, which corresponds to the usual notion of strict convexity in Euclidean space.

We will only use the following consequence of (a): if $\p M$ is strictly convex, then the exit time function $\tau$ is $C^{\infty}$ in $SM^{\mathrm{int}}$ and hence all functions in the analysis below are $C
^{\infty}$, see \cite[Section 3.2]{PSU_book}. In fact assumption (a) can often be removed with extra arguments   \cite{GMT}. \\[-5pt]

\item (Nontrapping)
The \emph{nontrapping} condition means that any geodesic in $M$ should reach the boundary $\p M$ in finite time. An example of a trapped geodesic is the equator in a large spherical cap $\{ x \in S^2 \,:\, x_3 \geq -\eps \}$. \\[-5pt]

\item (Conjugate points)
If $\gamma: [a,b] \to M$ is a geodesic segment and if there is a nontrivial smooth family of geodesics $(\gamma_s)_{s \in (-\eps, \eps)}$ such that $\gamma_0 = \gamma$ and $\gamma_s(a) = \gamma(a)$, $\gamma_s(b) = \gamma(b)$ for $s \in (-\eps,\eps)$, then the points $\gamma(a)$ and $\gamma(b)$ are said to be \emph{conjugate} along $\gamma$. This is a sufficient and almost necessary condition for conjugate points; for precise definitions see \cite[Section 3.7]{PSU_book}. As an example, the north and south poles on the sphere are conjugate along any geodesic (=great circle) connecting them.

Part (c) of the definition of a simple manifold states that there is no pair of conjugate points along any geodesic segment in $M$. Informally this means that there is no family of geodesics that starts at one point and converges to another point after some time. When $\dim(M) = 2$, a sufficient condition for no conjugate points is that the Gaussian curvature satisfies $K(x) \leq 0$ for all $x \in M$ (in higher dimensions it is enough that all sectional curvatures are nonpositive).
\end{tehtratk}

\vspace{5pt}

The class of simple manifolds shows up frequently in geometric inverse problems. We mention that any simple manifold is diffeomorphic to a ball, so one can think of $M$ as being just the closed unit ball in $\mR^n$ with some nontrivial Riemannian metric $g$. There are several equivalent definitions \cite[Section 3.8]{PSU_book} and we will need the following.

\begin{lemma}[Exponential map on simple manifolds] \label{lemma_simple_exp}
Let $(M,g)$ be compact with strictly convex smooth boundary. Then $(M,g)$ is simple iff there is an open manifold $(U,g)$ containing $M$ as a compact subdomain such that for any $p \in M$, the exponential map $\exp_p$ is a diffeomorphism from its maximal domain $D_p$ in $T_p U$ onto $U$. %Moreover, any simple $n$-manifold is diffeomorphic to the closed unit ball in $\mR^n$.
\end{lemma}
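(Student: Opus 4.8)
The plan is to prove the two implications separately, starting with the easy one.

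\emph{$(\Leftarrow)$} Assume the extension $(U,g)$ exists. Strict convexity of $\p M$ is part of the hypothesis, so it suffices to verify that $(M,g)$ is nontrapping and has no conjugate points. If $\gamma : [a,b] \to M$ were a geodesic with $\gamma(a)$ and $\gamma(b)$ conjugate, then $d(\exp_{\gamma(a)})$ would be singular at the point $(b-a)\dot\gamma(a)$ by the defining property of conjugate points \cite[Section 3.7]{PSU_book}; but that point lies in $D_{\gamma(a)}$, since the segment stays in $M \subset U$ and $U$ is open, contradicting that $\exp_{\gamma(a)}$ is a diffeomorphism on $D_{\gamma(a)}$. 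For the nontrapping property, suppose $\gamma_{x,v}$ with $(x,v) \in SM$ satisfied $\gamma_{x,v}(t) \in M$ for all $t \geq 0$. By the homogeneity of geodesics $\exp_x(tv) = \gamma_{x,v}(t) \in M \subset U$ for every $t \geq 0$, so the whole ray $\{ tv : t \geq 0 \}$ lies in $D_x$; applying the continuous map $\exp_x^{-1} : U \to D_x$ to the compact set $M$ shows that $\exp_x^{-1}(M)$ is compact, yet it contains the unbounded ray $\{ tv : t \geq 0 \}$ (and $v \neq 0$), which is absurd. Hence $(M,g)$ is simple.

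\emph{$(\Rightarrow)$} Assume $(M,g)$ is simple. First I enlarge $M$: using the stability of strict convexity, of the nontrapping property, and of the absence of conjugate points under small enlargements \cite[Section 3]{PSU_book}, choose a slightly larger compact simple manifold $(M_1,g)$ with $M \subset M_1^{\mathrm{int}}$, and set $U := M_1^{\mathrm{int}}$. Fix $p \in M \subset U$, so that $T_p U = T_p M_1$. Since $M_1$ is nontrapping and $\p M_1$ strictly convex, and $p$ is an interior point, each geodesic from $p$ meets $\p M_1$ transversally at a finite time, so the exit time $\tau(p,\cdot)$ is finite and smooth, hence bounded, on the compact sphere $S_p M_1$; therefore the maximal domain $D_p = \{ tw : w \in S_p M_1,\ 0 \leq t < \tau(p,w) \}$ of $\exp_p$ in $T_p U$ is a bounded, star-shaped, open set. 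Because $M_1$ has no conjugate points, $d(\exp_p)_v$ is invertible for every $v \in D_p$, so $\exp_p : D_p \to U$ is a local diffeomorphism, in particular an open map.

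Next I show that $\exp_p(D_p)$ is closed in $U$ and that $\exp_p$ is proper. Let $q_j = \exp_p(v_j) \to q \in U$. The $v_j$ lie in the bounded set $D_p$, so after passing to a subsequence $v_j \to v$, and we may assume $v \neq 0$ (if $v = 0$ then $q = p \in \exp_p(D_p)$). Since geodesics on the closed ambient manifold $N \supset M_1$ exist for all time, continuous dependence on initial conditions gives $\gamma_{p,v_j} \to \gamma_{p,v}$ on $[0,1]$ and hence $\gamma_{p,v}(1) = q \in M_1^{\mathrm{int}}$. On the other hand $v_j \in D_p$ gives $\abs{v_j} < \tau(p, v_j/\abs{v_j})$, so letting $j \to \infty$ and using continuity of $\tau(p,\cdot)$ on $S_p M_1$ yields $\abs{v} \leq \tau(p, v/\abs{v})$; equality would place $\gamma_{p,v}(1)$ on $\p M_1$, contradicting $q \in M_1^{\mathrm{int}}$, so in fact $\abs{v} < \tau(p, v/\abs{v})$, i.e.\ $v \in D_p$. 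Thus $q = \exp_p(v) \in \exp_p(D_p)$, so this set is closed; being also open and nonempty in the connected manifold $U$, it equals $U$. The same argument, with $q_j$ ranging over a compact set $K \subset U$, shows $\exp_p^{-1}(K)$ is compact, so $\exp_p$ is proper.

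The remaining point, which is the main obstacle, is injectivity. A proper local diffeomorphism between connected manifolds is a covering map, so $\exp_p : D_p \to U$ is a covering. Since a simple manifold is diffeomorphic to a ball, $U = M_1^{\mathrm{int}}$ is simply connected, and because $D_p$ is connected the covering must be single-sheeted; hence $\exp_p$ is injective, and therefore a diffeomorphism onto $U$. (In dimension two one can bypass the ball structure and argue directly: two distinct geodesics from $p$ to $q$ would bound a geodesic bigon, which is incompatible with Gauss--Bonnet and the absence of conjugate points.) This injectivity step is where the argument effectively reproves one of the equivalent characterizations of simple manifolds recorded in \cite[Section 3.8]{PSU_book}, and it is the place where the full strength of the simplicity hypothesis is used.
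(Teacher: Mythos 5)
The paper does not actually supply a proof of this lemma: the easy direction is left implicit and for the nontrivial direction the text simply refers the reader to \cite[Section 3.8]{PSU_book}, explicitly noting that it ``requires geometric arguments.'' So the relevant comparison is between your attempt and what a self-contained proof would have to accomplish.

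Your $(\Leftarrow)$ direction is correct and cleanly argued: the diffeomorphism property of $\exp_p$ rules out conjugate points because critical points of $\exp_p$ are precisely conjugate points, and the compactness argument (the ray $\{tv : t \geq 0\}$ would be an unbounded subset of the compact set $\exp_x^{-1}(M)$) correctly kills trapping. The setup of the $(\Rightarrow)$ direction is also sound: extending to a slightly larger simple $M_1$ (a stability fact you are entitled to cite), observing that $D_p$ is bounded and star-shaped, that $d\exp_p$ is invertible on $D_p$ by absence of conjugate points, and that the closedness/properness argument combined with openness and connectedness gives surjectivity. All of that is correct, and properness plus local diffeomorphism does make $\exp_p$ a covering map onto $U$.

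The genuine gap is the injectivity step, and you have identified it yourself. To conclude that the covering is one-sheeted you invoke ``a simple manifold is diffeomorphic to a ball, hence $U = M_1^{\mathrm{int}}$ is simply connected.'' But the statement that simple manifolds are diffeomorphic to balls is, in essentially every treatment (including \cite[Section 3.8]{PSU_book}, which is the very source the paper points to), \emph{derived from} the global diffeomorphism property of $\exp_p$: once one knows $\exp_p : D_p \to M$ is a diffeomorphism, $M$ is identified with a bounded star-shaped open set in $T_pM$, which is a ball. So you are using a consequence of the lemma to prove the lemma, applied to $M_1$ rather than $M$, but $M_1$ is simple too and the fact is at exactly the same logical depth. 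The Gauss--Bonnet/geodesic bigon alternative you mention in dimension two is a legitimate escape in that case, but it does not cover the general statement, and even there it needs a more careful argument than ``a bigon is incompatible with Gauss--Bonnet and no conjugate points'' --- the immediate output of Gauss--Bonnet is only $\int_D K = \alpha_p + \alpha_q > 0$, which does not by itself contradict absence of conjugate points (small spherical caps have $K > 0$ and are simple). What a self-contained proof must supply is an independent route to injectivity of $\exp_p$ (or equivalently to $\pi_1(U) = 0$), for instance via an index-form or boundary-distance argument as in \cite[Section 3.8]{PSU_book}, and this is precisely the ``geometric argument'' the paper declines to reproduce.
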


Recall that $\exp_p: D_p \subset T_p U \to U$ parametrizes part of $U$ by radial geodesics starting at $p$. The proof that any simple manifold satisfies the condition in Lemma \ref{lemma_simple_exp} requires geometric arguments and may be found in \cite[Section 3.8]{PSU_book}. It follows that any $x \in U$ can be uniquely written as 
\begin{equation} \label{riemannian_polar_first}
x = \exp_p(r\omega)
\end{equation}
for some $r \geq 0$ and $\omega \in S^{n-1}$, with $r \omega \in D_p$. Thus we may identify $x \in U$ with $(r,\omega)$. The coordinates $(r,\omega)$ are called \emph{Riemannian polar coordinates}, or \emph{polar normal coordinates}, in $(U,g)$. Thus Lemma \ref{lemma_simple_exp} essentially states that a manifold is simple iff it admits global polar coordinates centered at any point.

\begin{example}[Simple manifolds]
Strictly convex bounded smooth domains in $\mR^n$, or in nonpositively curved Riemannian manifolds, are simple. An example with positive curvature is given by the small spherical cap $M = \{ x \in S^2 \,:\, x_3 \geq \eps \}$, where $S^2$ is the unit sphere in $\mR^3$ and $\eps > 0$. Note that such a spherical cap does not contain trapped geodesics or conjugate points. Small metric perturbations of simple manifolds are also simple.
\end{example}

The main result in this setting, proved first in \cite{Mu1} in two dimensions, states that the geodesic X-ray transform is injective on simple manifolds.

\begin{theorem}[Injectivity] \label{thm_xray}
Let $(M,g)$ be a simple manifold. If $f \in C^{\infty}(M)$ satisfies $If = 0$, then $f = 0$.
\end{theorem}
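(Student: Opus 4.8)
The plan is to prove injectivity of the geodesic X-ray transform on a simple manifold by the \emph{energy identity} (Pestov identity) method, which is the standard two-dimensional approach going back to Mukhometov and Pestov--Uhlmann. The starting point is to realize $If$ as boundary data for a transport equation on $SM$. Given $f \in C^\infty(M)$ with $If = 0$, define the function
\[
u(x,v) := \int_0^{\tau(x,v)} f(\gamma_{x,v}(t)) \,dt, \qquad (x,v) \in SM.
\]
Because $\p M$ is strictly convex and $(M,g)$ is nontrapping, $\tau$ is smooth on $SM^{\mathrm{int}}$ (and suitably regular up to the boundary), so $u$ is smooth on $SM$. By the fundamental theorem of calculus $u$ solves the transport equation $Xu = -f$ on $SM$, where $X$ is the geodesic vector field (the generator of $\varphi_t$), and $u|_{\p_-(SM)} = If$ composed with the obvious identification; the hypothesis $If = 0$ gives $u = 0$ on the influx boundary. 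One also checks $u|_{\p_+(SM)} = 0$, so $u$ vanishes on all of $\p(SM)$.

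Next I would bring in the structure of $SM$ as a three-dimensional manifold (here $\dim M = 2$) with the canonical framing $\{X, X_\perp, V\}$, where $V$ is the vertical vector field generating rotations in the fibers $S_xM$ and $X_\perp := [X, V]$. These satisfy the commutator relations $[X, V] = X_\perp$, $[X_\perp, V] = -X$, and $[X, X_\perp] = -K V$ with $K$ the Gaussian curvature. The Pestov identity is the pointwise/integral identity
\[
\norm{X V u}_{L^2(SM)}^2 = \norm{V X u}_{L^2(SM)}^2 - \ip{K V u}{V u}_{L^2(SM)} + \norm{X u}_{L^2(SM)}^2,
\]
valid for $u \in C^\infty(SM)$ vanishing on $\p(SM)$; it is proved by expanding commutators and integrating by parts, all boundary terms dropping out precisely because $u|_{\p(SM)} = 0$. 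Applying this to our $u$ with $Xu = -f$: since $f = f(x)$ is independent of $v$, we have $Vf = 0$, hence $VXu = -Vf = 0$, so the first term on the right vanishes and we are left with
\[
\norm{X V u}_{L^2}^2 + \ip{K V u}{V u}_{L^2} = \norm{f}_{L^2(SM)}^2.
\]

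The final step is to exploit nonpositivity of curvature — or more precisely the \emph{absence of conjugate points}, which is the actual content of condition (c) and is what makes the argument work even for the positively curved small spherical caps. The clean curvature-sign case is immediate: if $K \le 0$ then the left side is $\ge 0$ only after we also control the sign, so in fact one reorganizes to get $\norm{f}_{L^2}^2 = \norm{XVu}^2 - \norm{\ldots}$; the honest statement is that the quadratic form $\norm{XVu}_{L^2}^2 + \ip{KVu}{Vu}_{L^2}$ is nonnegative on functions arising this way whenever $(M,g)$ has no conjugate points, by a Sturm-type comparison / index-form argument applied along each geodesic (this is where one uses that there is no nontrivial Jacobi field vanishing at two boundary points). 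I expect \textbf{this positivity of the Pestov quadratic form under the no-conjugate-points hypothesis to be the main obstacle}, since the curvature term has the wrong sign in general and must be absorbed using a substitution $w = \psi \cdot (Vu)$ with $\psi$ solving the scalar Jacobi equation $\ddot\psi + K\psi = 0$ with no zeros — whose existence is exactly the no-conjugate-points condition. Once positivity is in hand, the identity forces $f \equiv 0$ on $SM$, hence $f \equiv 0$ on $M$, completing the proof. (For $\dim M \ge 2$ one runs the same scheme after decomposing into vertical spherical harmonics, or invokes the higher-dimensional Pestov identity of Pestov--Uhlmann / Paternain--Salo--Uhlmann.)
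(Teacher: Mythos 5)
Your overall strategy --- reduce to the transport equation $Xu=-f$ with $u|_{\p(SM)}=0$, apply the Pestov identity, and exploit the no-conjugate-points hypothesis via an index-form/Jacobi argument --- is exactly the paper's proof. But your Pestov identity has the two second-order terms on the wrong sides, and this sign error prevents the argument from closing. With the conventions you state ($[X,V]=X_\perp$, $[V,X_\perp]=X$, $[X,X_\perp]=-KV$) the identity reads
\[
\norm{VXu}^2 = \norm{XVu}^2 - (KVu, Vu) + \norm{Xu}^2,
\]
with $\norm{VXu}^2$ on the left (it comes from $\norm{Pu}^2=\norm{P^*u}^2+(2i[A,B]u,u)$ with $P=VX$, $P^*=XV$). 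Substituting $VXu=0$ and $Xu=-f$ yields $\norm{XVu}^2 - (KVu, Vu) = -\norm{f}^2$, and the index-form positivity is $\norm{XVu}^2 - (KVu, Vu) \ge 0$, matching $I_\gamma(Y,Y)=\int(\abs{D_tY}^2-K\abs{Y}^2)\,dt\ge 0$ in the absence of conjugate points; hence $\norm{f}^2\le 0$ and $f=0$.

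Your version gives $\norm{XVu}^2 + (KVu,Vu) = \norm{f}^2$, and the nonnegativity you invoke (which in fact should be $\norm{XVu}^2 - (KVu,Vu) \ge 0$, minus sign, since the index form carries $-K\abs{Y}^2$) would only prove the vacuous $\norm{f}^2\ge 0$, so the argument does not conclude. The case ``immediate from the sign of $K$'' is $K\le 0$, where $-(KVu,Vu)\ge 0$ directly; the Jacobi/Sturm substitution $w = \psi Vu$ you describe is indeed the mechanism for general simple manifolds, but it must be tied to the correctly signed identity. A smaller caveat: with $f\in C^\infty(M)$ the primitive $u^f$ is generally not smooth up to the tangential part of $\p(SM)$ (as $\tau$ is only smooth in $SM^{\mathrm{int}}$); the paper sidesteps this regularity issue in its sketch by assuming $f\in C^\infty_c(M^{\mathrm{int}})$.
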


We note that on general manifolds injectivity may fail:

\begin{example}[Counterexamples]
There are two basic examples of manifolds where the geodesic X-ray transform is not injective. The first is a large spherical cap $M = \{ x \in S^2 \,:\, x_3 \geq -\eps \}$. Any odd function $f$ supported in a small neighborhood of $e_1$ and $-e_1$ integrates to zero over all great circles, hence $If = 0$ but $f$ is nontrivial. Another example is a catenoid type surface with a flat cylinder glued in the middle \cite[Section 2.5]{PSU_book}. Note that both examples contain trapped geodesics. The latter example has no conjugate points.
\end{example}

We mention that the nontrapping condition can be replaced by hyperbolic trapped set \cite{Guillarmou}. When $\dim(M) \geq 3$ further injectivity results are available, based on the microlocal method introduced in \cite{UhlmannVasy}. These results are valid on strictly convex nontrapping manifolds that admit a strictly convex function, i.e.\ a function $\varphi \in C^{\infty}(M)$ such that $\mathrm{Hess}_g(\varphi) > 0$, or more generally are foliated by strictly convex hypersurfaces. Such manifolds may have conjugate points. 

The following questions remain open (see e.g.\ \cite{IlmavirtaMonard, PSU_book} for further references):

\begin{question}
Is the geodesic X-ray transform injective on compact strictly convex nontrapping manifolds?
\end{question}

\begin{question}
Is the local geodesic X-ray transform injective on strictly convex 2D manifolds? This is true when $(M,g)$ is real-analytic \cite{SU_nonsimple, MST} or when $\dim(M) \geq 3$ \cite{UhlmannVasy}.
\end{question}

\begin{question}
Does every simple manifold admit a strictly convex function?
\end{question}

\begin{question}
Are there other interesting examples of manifolds where the geodesic X-ray transform is not injective?
\end{question}

We will sketch a proof of Theorem \ref{thm_xray} in the end of this section. However, we first discuss some microlocal aspects of the geodesic X-ray transform.

\subsection{Microlocal aspects}

When $(M,g)$ is compact, strictly convex and nontrapping, it can be proved that $I$ is a Fourier integral operator in $M^{\mathrm{int}}$ (see e.g.\ \cite{MonardStefanovUhlmann}). For general manifolds it is not reasonable to expect exact inversion formulas for $I$ like the FBP formula \eqref{fbp_formula} in the Euclidean case. However, if we additionally assume that $(M,g)$ is simple, an analogue of Theorem \ref{theorem_normal_operator_radon} persists:

\begin{theorem}[Normal operator] \label{thm_xray_normal}
Let $(M,g)$ be a simple manifold. Then $I^* I$, computed with respect to suitable $L^2$ inner products, is a classical elliptic $\Psi$DO of order $-1$ in $M^{\mathrm{int}}$.
\end{theorem}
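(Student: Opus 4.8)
The plan is to follow the Euclidean model of Theorem \ref{theorem_normal_operator_radon} as closely as possible, but replacing the Fourier-analytic computation (which is unavailable on a general simple manifold) by an explicit computation of the Schwartz kernel of $I^* I$ in polar normal coordinates, and then recognizing that kernel as a conormal distribution giving a classical $\Psi$DO of order $-1$. First I would fix the $L^2$ inner products: on $C^\infty(M)$ use $L^2(M, dV_g)$, and on functions on $\partial(SM)$ use the measure $d\mu = \abs{\br{v,\nu}_g}\, dS_g(x)\, dS_x(v)$ (the Santal\'o measure), which is the natural measure making $I^*$ clean to compute. With these choices one writes
\[
I^* I f(x) = \int_{S_x M} \int_{\mathbb{R}} f(\gamma_{x,v}(t))\, dt\, dS_x(v),
\]
the point being that integrating over all of $S_x M$ and over $t \in \mathbb{R}$ amounts to integrating $f$ over every geodesic through $x$, counted appropriately.

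Next I would change variables from $(t,v) \in \mathbb{R} \times S_x M$ to the point $y = \exp_x(tv) \in M$. Here the simplicity assumption enters decisively through Lemma \ref{lemma_simple_exp}: the exponential map $\exp_x$ is a diffeomorphism from its domain in $T_x M$ (extended to the open manifold $U$) onto $U$, so $(t,v)$ are exactly polar normal coordinates for $y$ centered at $x$, and the map $(t,v) \mapsto y$ is smooth with a smooth, positive Jacobian away from $t = 0$. Writing the Euclidean volume element $t^{n-1}\, dt\, dS_x(v)$ and comparing with $dV_g(y)$ produces a factor $a(x,y)/d_g(x,y)^{n-1}$ where $d_g$ is the Riemannian distance and $a$ is smooth and positive (with $a(x,x) > 0$); this is where the absence of conjugate points is used to keep the Jacobian nonvanishing on all of $M \times M$ minus the diagonal. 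Thus
\[
I^* I f(x) = \int_M \frac{A(x,y)}{d_g(x,y)^{n-1}} f(y)\, dV_g(y)
\]
for some $A \in C^\infty(M^{\mathrm{int}} \times M^{\mathrm{int}})$ with $A > 0$ near the diagonal (the $\abs{\br{v,\nu}}$ weight in $d\mu$ is precisely what cancels the corresponding factor coming from the two endpoints of each maximal geodesic).

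It then remains to show that an operator with kernel of this form is a classical elliptic $\Psi$DO of order $-1$. For this I would work in normal coordinates centered at a point $x_0$: in such coordinates $d_g(x,y)$ agrees with $\abs{x-y}$ to leading order, and more precisely the kernel is a classical conormal distribution associated to the diagonal, homogeneous of degree $-(n-1) = -n + 1$ in $x - y$ to leading order, with a full asymptotic expansion in homogeneity. Taking the Fourier transform in $x - y$ of the homogeneous pieces (using the standard formula for the Fourier transform of $\abs{z}^{-(n-1)}$ on $\mathbb{R}^n$, which is $c_n \abs{\xi}^{-1}$ with $c_n > 0$) yields a symbol with leading term a positive constant times $\abs{\xi}_g^{-1}$, hence order $-1$ and elliptic; the lower-order terms in the kernel expansion contribute lower-order symbol terms, and one checks invariance under coordinate changes to conclude $I^* I \in \Psi^{-1}_{\mathrm{cl}}(M^{\mathrm{int}})$. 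The main obstacle is the middle step: verifying that the change of variables $(t,v) \mapsto \exp_x(tv)$ has the claimed smooth nonvanishing Jacobian \emph{globally} on $M$, and that the resulting amplitude $A(x,y)$ is genuinely smooth up to (and across) the diagonal with the stated homogeneity — this is exactly the geometric content of simplicity, and it is where one must invoke the Jacobi-field/no-conjugate-points analysis rather than just formal manipulation.
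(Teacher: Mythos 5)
Your proposal follows the same route as the paper's sketch: compute $I^*I$ with respect to the Santal\'o-weighted $L^2$ inner product on $\partial(SM)$, rewrite it via the change of variables $y = \exp_x(tv)$ using Lemma \ref{lemma_simple_exp} to pass to global polar normal coordinates, obtain the Schwartz kernel $A(x,y)\,d_g(x,y)^{1-n}$ with $A$ smooth and positive, and then read off that this is a classical conormal kernel of order $-1$ with elliptic principal symbol $c_n\abs{\xi}_g^{-1}$. The only differences are that you make explicit two points the paper delegates to the references (the precise choice of boundary measure, and the Fourier-transform argument identifying the homogeneous kernel with a classical $\Psi$DO), so this is the paper's argument, just spelled out in somewhat more detail.
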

\begin{proof}
(Sketch, see \cite[Section 8.1]{PSU_book} for details.) The idea of the proof, as in Theorem \ref{theorem_normal_operator_radon}, is to compute the inner product $(If, Ih)$ in a suitable $L^2$ inner product on $\p SM$. The Fourier slice theorem is not available, but one can use the definitions and directly express the normal operator as 
\[
I^* I f(x) = 2 \int_{D_x} \frac{f(\exp_x(w))}{\abs{w}_g^{n-1}} \,dT_x(w)
\]
where $D_x$ is the maximal domain of $\exp_x$ in $T_x M$. This works on any nontrapping manifold. Now we invoke the simplicity assumption (Lemma \ref{lemma_simple_exp}) which guarantees that one has global polar coordinates $y = \exp_x(w)$ on $M$. This is somewhat analogous to the Euclidean polar coordinates in Theorem \ref{theorem_normal_operator_radon}. Thus 
\[
I^* I f(x) = 2 \int_M \frac{a(x,y)}{d_g(x,y)^{n-1}} f(y) \,dV_g(y)
\]
where $d_g(x,y)$ is the $g$-distance between $x$ and $y$, and $a(x,y)$ is a smooth positive function with $a(x,x) = 1$. We have now computed the Schwartz kernel of $I^* I$, and this kernel is smooth away from the diagonal and has a singularity of the form $d_g(x,y)^{1-n}$ on the diagonal. It follows that $I^* I$ is a classical $\Psi$DO of order $-1$ and its principal symbol is $c_n \abs{\xi}_g^{-1}$, showing that $I^* I$ is elliptic.
\end{proof}

Theorem \ref{thm_xray_normal}, applied in an extension of $M$, shows that on simple manifolds 
\begin{gather*}
\mathrm{sing\,supp}(I^* I f) = \mathrm{sing\,supp}(f), \\
\mathrm{WF}(I^* I f) = \mathrm{WF}(f).
\end{gather*}
Thus we can at least determine the singularities (i.e.\ jumps etc) of $f$ from the knowledge of $If$. Since $I^* I$ is an elliptic $\Psi$DO, the standard parametrix construction implies that it can be inverted modulo a (compact) smoothing operator. This is not in general sufficient for showing that $I$ is honestly invertible. However, in the following situations we do get injectivity of $I$:

\begin{tehtratk}
\item[1.]
$(M,g)$ is real-analytic. The argument is based on \emph{analytic microlocal analysis} and one proof proceeds roughly as follows (see \cite{SU_generic} for details): now $I^* I$ is an analytic elliptic $\Psi$DO, and it has a parametrix $Q$ so that 
\[
Q(I^* I f) = f + Rf
\]
where $R$ is an \emph{analytic smoothing operator} (i.e.\ it maps any function to a real-analytic function). If $If = 0$, it follows that $f = -Rf$ is real-analytic. Moreover, if $If = 0$ one can do a boundary determination argument to show that $f$ must vanish to infinite order on $\p M$. Combining these facts proves that $f=0$. An alternative proof, based on considering $I$ directly as an analytic FIO, may be found in \cite{SU_nonsimple, MST}.
\item[2.] 
$n \geq 3$ and $(M,g)$ is foliated by strictly convex hypersurfaces \cite{UhlmannVasy}. In this case the \emph{localized} normal operator $(\chi I)^* (\chi I)$, where $\chi$ is a cutoff localizing to all sufficiently short geodesics near a fixed point $x_0 \in \p M$, is elliptic (this fails when $n = 2$). A modification of this idea, which involves a suitable artifical ``boundary at infinity'' near $x_0$ and conjugation by certain exponentials, leads to an elliptic $\Psi$DO in Melrose's scattering calculus which is honestly invertible (since after applying a parametrix, the related smoothing operator becomes small in norm by adjusting a parameter controlling the artificial boundary). Thus $If = 0$ implies $f=0$ near $x_0$. Iterating this result by using the strictly convex foliation implies that $f=0$ everywhere. An alternative version of this argument may be found in \cite{Vasy_semiclassical}.
\end{tehtratk}

The microlocal ideas above are not sufficient to prove Theorem \ref{thm_xray} in general, but one can use energy methods instead.

\subsection{Proof of injectivity}

In the rest of this section we will sketch a proof of Theorem \ref{thm_xray} following the argument in \cite{PSU1} under two simplifying assumptions:
\begin{itemize}
\item 
$\dim(M) = 2$ (to simplify the analysis on $SM$);
\item 
$f \in C^{\infty}_c(M^{\mathrm{int}})$ (to remove regularity issues near $\p M$).
\end{itemize}
The proof contains two parts:
\begin{enumerate}
\item[1.]
Reduction from the integral equation $If = 0$ into a partial differential equation $VXu = 0$ on $SM$.
\item[2.]
Uniqueness result for the equation $VXu = 0$ in $SM$ based on energy methods.
\end{enumerate}
A more detailed presentation may be found in \cite[Chapter 4]{PSU_book}.

\subsubsection{Reduction to PDE}

Assume that $f \in C^{\infty}_c(M^{\mathrm{int}})$ satisfies $If = 0$. We begin by introducing the primitive function 
\[
u(x,v) = u^f(x,v) := \int_0^{\tau(x,v)} f(\varphi_t(x,v)) \,dt, \qquad (x,v) \in SM.
\]
Here we think of $f$ as a function on $SM$ by taking $f(x,v) = f(x)$. Note that $u|_{\p(SM)} = If = 0$. Since $\tau$ is smooth in $SM^{\mathrm{int}}$ and $f$ vanishes near $\p M$, we in fact have $u \in C^{\infty}_c(SM^{\mathrm{int}})$.

Next we introduce the \emph{geodesic vector field} $X: C^{\infty}(SN) \to C^{\infty}(SN)$, which differentiates a function on $SN$ along geodesic flow:
\[
Xw(x,v) = \frac{d}{ds} w(\varphi_s(x,v)) \Big|_{s=0}.
\]
We note that the function $u = u^f$ above satisfies 
\begin{align*}
Xu(x,v) &= \frac{d}{ds} u(\varphi_s(x,v)) \Big|_{s=0} = \frac{d}{ds} \int_0^{\tau(\varphi_s(x,v))} f(\varphi_t(\varphi_s(x,v))) \,dt \Big|_{s=0} \\
 &= \frac{d}{ds} \int_0^{\tau(x,v)-s} f(\varphi_{t+s}(x,v)) \,dt \Big|_{s=0} \\
 &= \frac{d}{ds} \int_s^{\tau(x,v)} f(\varphi_r(x,v)) \,dr \Big|_{s=0} \\
 &= -f(x).
\end{align*}
In particular we have 
\begin{equation} \label{isp}
Xu = -f(x) \text{ on $SM$}, \qquad u|_{\p SM} = If = 0.
\end{equation}

The problem \eqref{isp} can be considered as an \emph{inverse source problem} for a transport equation: the source $f(x)$ in the equation produces a measurement $u|_{\p(SM)} = If = 0$. We wish to prove uniqueness in the sense that if the measurement $u|_{\p(SM)}$ is zero, then the source must be zero.

Note that the equation is on $SM = \{ (x,v) \in TM \,:\, \abs{v} = 1 \}$, but the source $f(x)$ has the special property that it only depends on $x$ and not on $v$. We can further get rid of the source by differentiating the equation $Xu(x,v) = -f(x)$ with respect to $v$. To do this in a coordinate-invariant way, we introduce the following notions:

\begin{definition}
Let $(M,g)$ be an oriented two-dimensional manifold. Given $v \in S_x M$, we define $v^{\perp}$ (\emph{rotation by $90^{\circ}$ counterclockwise}) to be the unique vector in $S_x M$ so that $(v, v^{\perp})$ is a positively oriented orthonormal basis of $T_x M$. Morever, given $\theta \in \mR$, we define the \emph{rotation} 
\[
R_{\theta} v = (\cos \theta) v + (\sin \theta) v^{\perp}.
\]
Finally, we define the \emph{vertical vector field} $V: C^{\infty}(SM) \to C^{\infty}(SM)$ by 
\[
Vw(x,v) = \frac{d}{d\theta} w(x, R_{\theta} v) \Big|_{\theta=0}, \qquad (x,v) \in SM.
\]
\end{definition}

\begin{example}[$X$ and $V$ in the Euclidean disk] \label{ex_xv}
Let $M = \ol{\mathbb{D}} \subset \mR^2$ and let $g$ be the Euclidean metric. Then 
\[
SM = \{ (x,v_{\theta}) \,:\, x \in M, \ \theta \in (-\pi,\pi] \}
\]
where $v_{\theta} = (\cos \theta, \sin \theta)$. We identify $(x,v_{\theta})$ with $(x,\theta)$. Then 
\[
Xw(x,\theta) = \frac{d}{dt} w(x+t v_{\theta}, \theta) \Big|_{t=0} = v_{\theta} \cdot \nabla_x w(x, \theta)
\]
and 
\[
Vw(x,\theta) = \frac{d}{d\theta} w(x, \theta).
\]
\end{example}

If $f(x)$ is independent of $v$, clearly $Vf = 0$. Thus if $f \in C^{\infty}_c(M^{\mathrm{int}})$ satisfies $If = 0$, then by \eqref{isp} the primitive $u = u^f \in C^{\infty}_c(SM^{\mathrm{int}})$ satisfies 
\[
VXu = 0 \text{ in $SM$}.
\]
This reduces the geodesic X-ray transform problem to showing that the only solution of the equation $VXu = 0$ on $SM$ which vanishes near $\p M$ is the zero solution.

\subsubsection{Uniqueness via energy methods}

The required uniqueness result will be a consequence of the following energy estimate.

\begin{proposition}[Energy estimate] \label{prop_energy}
If $(M,g)$ is a two-dimensional simple manifold, then 
\[
\norm{Xu}_{L^2(SM)} \leq \norm{VXu}_{L^2(SM)}
\]
for any $u \in C^{\infty}_c(SM^{\mathrm{int}})$.
\end{proposition}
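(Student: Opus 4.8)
The plan is to exploit the commutator structure of the vector fields $X$, $V$ on the unit circle bundle $SM$. Over a two-dimensional manifold, $SM$ is a three-dimensional manifold equipped with a canonical moving frame $\{X, X_\perp, V\}$, where $X_\perp := [X,V]$ is the second ``horizontal'' direction. These satisfy the structure equations
\begin{gather*}
[X,V] = X_\perp, \qquad [X_\perp, V] = -X, \qquad [X,X_\perp] = -K V,
\end{gather*}
where $K$ is the Gaussian curvature pulled back to $SM$. In particular, the volume form $d(SM)$ (Liouville measure) is invariant under the flows of all three fields, so each of $X$, $X_\perp$, $V$ is skew-adjoint on $L^2(SM)$ when acting on functions compactly supported in $SM^{\mathrm{int}}$. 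I would first set $w := Xu$, so $w \in C^\infty_c(SM^{\mathrm{int}})$, and the goal becomes $\norm{w}^2 \le \norm{Vw}^2$.

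The core is a \emph{Pestov-type energy identity}. Starting from $\norm{VXu}^2 = \ip{VXu}{VXu}$ and integrating by parts (legitimate since $u$ has compact support in the interior), I would move one $V$ across and then use the commutator $[X,V] = X_\perp$ to trade an $XV$ for a $VX$ plus lower-order terms. The upshot is an identity of the schematic form
\begin{equation*}
\norm{VXu}_{L^2(SM)}^2 = \norm{XVu}_{L^2(SM)}^2 - \int_{SM} K \abs{Vu}^2 \, d(SM) + \norm{Xu}_{L^2(SM)}^2,
\end{equation*}
where the curvature term arises precisely from the bracket $[X,X_\perp] = -KV$ and the boundary terms all vanish by compact support. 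Since $(M,g)$ is simple and two-dimensional, there are no conjugate points; the standard consequence — obtained by a Sturm comparison / index-form argument along geodesics, or equivalently by producing a positive solution of the scalar Riccati/Jacobi equation $\dot a + a^2 + K = 0$ along every geodesic — is that the quadratic form $\norm{XVu}^2 - \int K \abs{Vu}^2$ is nonnegative on $C^\infty_c(SM^{\mathrm{int}})$. Granting this, the identity immediately gives $\norm{VXu}^2 \ge \norm{Xu}^2$, which is the claim.

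The main obstacle is establishing that nonnegativity of the curvature-corrected form $\norm{XVu}^2 - \int_{SM} K\abs{Vu}^2$, i.e. that this is where the simplicity hypothesis must genuinely be used rather than just formal integration by parts. The cleanest route is to decompose $Vu$ into vertical Fourier modes $Vu = \sum_k (Vu)_k$ on each fiber $S_xM \cong S^1$ — using that $V$ generates rotation, so $V$ acts as $ik$ on the $k$-th mode — reducing matters to a one-dimensional estimate along each geodesic for each mode, which is exactly controlled by the absence of conjugate points (the Jacobi equation has no interior zeros, so the associated Dirichlet form is positive). One must be careful that the Fourier decomposition interacts correctly with $X$ (it does not preserve modes — $X$ shifts by $\pm 1$ — so one works with $X_\perp$ and the orthogonality of distinct modes), and that the compact support in $SM^{\mathrm{int}}$ is preserved mode-by-mode; these are the technical points, but no genuinely new idea beyond Santaló's formula and the Jacobi comparison is needed. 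I would then remark that the inequality is in fact strict unless $Xu \equiv 0$, which is what powers the uniqueness corollary for $VXu = 0$.
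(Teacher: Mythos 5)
Your proposal follows essentially the same two-step strategy as the paper: first derive the Pestov identity
\[
\norm{VXu}^2 = \norm{XVu}^2 - (KVu, Vu) + \norm{Xu}^2
\]
from the commutator structure of $X$, $X_\perp$, $V$ (the paper does this via the self-adjoint/skew-adjoint splitting $P = A+iB$ of $P=VX$, you via direct integration by parts and bracket trades --- these are equivalent), and then show that $\norm{XVu}^2 - (KVu,Vu) \geq 0$ via Santal\'o's formula and nonnegativity of the index form under the no-conjugate-points hypothesis. That matches the paper's proof exactly.

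Two small remarks on peripheral claims. The vertical Fourier decomposition you float as the ``cleanest route'' is actually a detour: the paper simply plugs the normal field $Y_{x,v}(t) = Vu(\varphi_t(x,v))\,\dot\gamma(t)^\perp$ into the index form along each geodesic after Santal\'o, with no mode analysis at all, and your own caveats (mode-shifting by $X$ and $X_\perp$) are precisely why that detour is undesirable. Also, the closing assertion that the inequality is strict unless $Xu \equiv 0$ is both imprecise (equality in Pestov forces $Vu \equiv 0$, not directly $Xu \equiv 0$) and unneeded for the intended application: if $VXu = 0$ then $\norm{Xu} \leq 0$ gives $Xu = 0$ from the non-strict inequality alone.
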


The $L^2$ norm above is interpreted as follows. Recall that on any Riemannian manifold $(M,g)$ there is a volume form $dV_g$. Moreover, if $x \in M$ the metric $g$ induces an inner product (i.e.\ metric) $g(x)$ on $T_x M$, and hence a metric and volume form $dS_x$ on the unit sphere $S_x M$. We then have the $L^2(SM)$ inner product 
\[
(u, w)= \int_{SM} u\bar{w} \,d\Sigma := \int_M \int_{S_x M} u(x,v) \ol{w(x,v)} \,dS_x(v) \,dV_g(x)
\]
and the corresponding norm 
\[
\norm{u} = \norm{u}_{L^2(SM)} = \left( \int_{SM} \abs{u}^2 \,d\Sigma \right)^{1/2}.
\]

The proof of the main theorem, when $\dim(M) = 2$ and $f \in C^{\infty}_c(M^{\mathrm{int}})$, follows easily from Proposition \ref{prop_energy}.

\begin{proof}[Proof of Theorem \ref{thm_xray}]
Let $f \in C^{\infty}_c(M^{\mathrm{int}})$ satisfy $If = 0$. We have seen that the primitive $u = u^f$ is in $C^{\infty}_c(SM^{\mathrm{int}})$ and satisfies $VXu = 0$ in $SM$. Proposition \ref{prop_energy} gives $Xu = 0$ in $SM$. By \eqref{isp} we get $f = -Xu = 0$.
\end{proof}

It remains to prove Proposition \ref{prop_energy}. Write 
\[
P := VX.
\]
The equation $Pu = 0$ in $SM$ is a second order PDE on the three-dimensional manifold $SM$. It does not belong to any of the standard classes (elliptic, parabolic, hyperbolic etc). Nevertheless we can prove an energy estimate for it by using a \emph{positive commutator argument}.

We first need to compute the formal adjoint of $P$ in the $L^2(SM)$ inner product. We start with the adjoints of $X$ and $V$.

\begin{lemma}[Adjoints of $X$ and $V$] \label{lemma_adjoints}
The vector fields $X$ and $V$ are formally skew-adjoint operators in the sense that 
\[
(Xu, w) = -(u, Xw), \quad (Vu, w) = -(u, Vw)
\]
for $u, w \in C^{\infty}_c(SM^{\mathrm{int}})$.
\end{lemma}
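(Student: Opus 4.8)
The plan is to prove both skew-adjointness statements by the same mechanism: each of $X$ and $V$ is differentiation along the flow of a vector field on $SM$ that preserves the measure $d\Sigma$, so Stokes' theorem (or direct integration by parts along flow lines) gives the claim, with no boundary terms since $u, w \in C^\infty_c(SM^{\mathrm{int}})$. Concretely, if $Y$ is a smooth vector field on $SM$ generating a flow $\psi_s$ that preserves $d\Sigma$, then $\mathcal{L}_Y(d\Sigma) = 0$, hence $\mdiv_{d\Sigma}(Y) = 0$, and for compactly supported $u, w$ one has $\int_{SM} (Yu)\,\bar w\,d\Sigma = -\int_{SM} u\,\overline{Yw}\,d\Sigma - \int_{SM} u\bar w\,(\mdiv_{d\Sigma} Y)\,d\Sigma = -\int_{SM} u\,\overline{Yw}\,d\Sigma$. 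So the whole lemma reduces to checking that the geodesic flow $\varphi_t$ and the rotation flow $(x,v) \mapsto (x, R_\theta v)$ each preserve $d\Sigma$.

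For $V$: the flow of $V$ is exactly $\theta \mapsto (x, R_\theta v)$, which acts as a rotation on each fiber $S_x M$ and is the identity on the base. Since $R_\theta$ is an isometry of $(T_x M, g(x))$ (it is defined precisely via the orthonormal frame $(v, v^\perp)$), it preserves the arclength measure $dS_x$ on $S_x M$; and it does nothing to $dV_g(x)$. Hence it preserves $d\Sigma = dS_x\,dV_g$, giving $(Vu, w) = -(u, Vw)$.

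For $X$: the flow of $X$ is the geodesic flow $\varphi_t$, and the key classical fact is that $\varphi_t$ preserves the Liouville (Sasaki) measure on $SM$, which is exactly the measure $d\Sigma$ described in the statement. I would cite this — it is standard and recorded in \cite[Chapter 3]{PSU_book} — or, if a self-contained argument is wanted, note that $X$ preserves the contact volume form on $SM$ because the geodesic flow is a contact flow (Liouville's theorem for the Hamiltonian flow of $\tfrac12|v|_g^2$ restricted to the unit cosphere bundle, transported to $SM$ via the metric). Either way one concludes $\mathcal{L}_X(d\Sigma) = 0$, i.e.\ $\mdiv_{d\Sigma}(X) = 0$, and the integration-by-parts identity above yields $(Xu, w) = -(u, Xw)$.

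The only mild obstacle is bookkeeping about what "$d\Sigma$" means and why it is the invariant measure: one must be careful that the fiberwise sphere measure $dS_x$ induced by $g(x)$ really assembles with $dV_g$ into the Liouville measure, rather than some other normalization. In dimension two this is transparent since $S_x M$ is a circle of circumference $2\pi$ and the Sasaki metric restricts to the standard metric on each fiber; the general identification is in \cite[Chapter 3]{PSU_book}. Granting that, both adjoint formulas follow immediately, and no separate computation in coordinates (such as Example~\ref{ex_xv}) is needed — though one could double-check against the Euclidean disk, where $X = v_\theta \cdot \nabla_x$ and $V = \p_\theta$ are manifestly skew-adjoint with respect to $dx\,d\theta$.
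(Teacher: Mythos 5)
Your approach is correct, but it is genuinely different from the one the paper intends. The paper defers the proof to Exercise 2.2, whose route is a concrete computation in global isothermal coordinates on a 2D surface: one writes $g_{jk} = e^{2\lambda}\delta_{jk}$, reads off explicit formulas $V = \p_\theta$ and $X = e^{-\lambda}(\cos\theta\,\p_{x_1} + \sin\theta\,\p_{x_2} + (\ldots)\p_\theta)$, uses $d\Sigma = e^{2\lambda}\,dx\,d\theta$, and verifies skew-adjointness by integrating by parts term by term. You instead argue abstractly: each of $X$, $V$ generates a flow on $SM$ that preserves $d\Sigma$ (rotation on fibers is trivially measure-preserving since $R_\theta$ is an isometry of each $(T_xM,g(x))$; for the geodesic flow this is Liouville's theorem), and a vector field whose flow preserves the measure is automatically skew-adjoint on compactly supported functions, with no boundary terms. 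Both arguments are valid. Your version is more conceptual and works in every dimension $n$, which is useful later when the paper switches to $\vd$ and $\hd$ in $\dim M \geq 3$; the cost is that you must invoke (or prove) two nontrivial imports — Liouville's theorem for the geodesic flow and the identification of the paper's measure $d\Sigma = dS_x\,dV_g$ with the Liouville/Sasaki measure — whereas the intended isothermal computation is entirely self-contained and elementary, serving double duty by also establishing the coordinate formulas needed in the proof of Lemma~\ref{lemma_commutator}. You flag the measure-identification point yourself and correctly note it is recorded in \cite[Chapter 3]{PSU_book}; if you want to keep your argument fully self-contained at the paper's level you would still need to supply that identification, at which point the coordinate computation is not obviously more work.
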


Assuming this, the formal adjoint of $P$ is $P^* = (VX)^* = XV$. Thus we may decompose $P$ in terms of its self-adjoint and skew-adjoint parts:
\begin{equation} \label{p_decomposition}
P = A+iB, \qquad A = \frac{P+P^*}{2}, \qquad B = \frac{P-P^*}{2i}.
\end{equation}
(Compare with the decomposition $z = a+ib$ of a complex number into its real and imaginary parts.) Since $A^* = A$ and $B^* = B$, we can now study the norm $\norm{VXu} = \norm{Pu}$ for $u \in C^{\infty}_c(SM^{\mathrm{int}})$ as follows:
\begin{align}
\norm{Pu}^2 &= (Pu, Pu) = ((A+iB)u, (A+iB)u) \notag \\
 &= \norm{Au}^2 + \norm{Bu}^2 + i(Bu, Au) - i(Au,Bu) \notag \\
 &= \norm{Au}^2 + \norm{Bu}^2 + (i[A,B]u, u) \label{au_bu_formula}
\end{align}
where $[A,B] := AB-BA$ is the \emph{commutator} of $A$ and $B$.

In Proposition \ref{prop_energy} we need to prove that $\norm{Pu} \geq \norm{Xu}$. We can obtain a lower bound for $\norm{Pu}$ from \eqref{au_bu_formula} if the commutator term $(i[A,B]u, u)$ is positive (or if it can be absorbed in the positive terms $\norm{Au}^2$ and $\norm{Bu}^2$). The commutator has the form 
\begin{align*}
2i[A,B] &= \frac{1}{2} [P+P^*, P-P^*] = [P^*, P] = P^* P - P P^* \\
 &= XVVX - VXXV.
\end{align*}
To study $[A,B]$ we need to commute $X$ and $V$. Define the vector field 
\[
X_{\perp} := [X, V].
\]

\begin{lemma}[Commutator formulas] \label{lemma_commutator}
If $(M,g)$ is two-dimensional, one has 
\begin{align*}
[X,V] &= X_{\perp}, \\
[V,X_{\perp}] &= X, \\
[X, X_{\perp}] &= -KV
\end{align*}
where $K$ is the Gaussian curvature of $(M,g)$.
\end{lemma}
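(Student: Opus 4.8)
\textbf{Proof proposal for Lemma \ref{lemma_commutator} (commutator formulas).}

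The plan is to work on the unit sphere bundle $SM$ of an oriented surface, where $X$, $V$, and $X_\perp := [X,V]$ span the tangent space at each point, and to verify the three bracket relations by a direct coordinate computation using a suitable adapted frame. The cleanest route is to choose, near a point $x_0 \in M$, \emph{isothermal coordinates} in which $g = e^{2\lambda}(dx^2 + dy^2)$ for a smooth function $\lambda$. In these coordinates $SM$ is parametrized by $(x,y,\theta)$, where $\theta$ is the angle that the unit vector $v$ makes with the horizontal in the underlying Euclidean frame, and one can write down explicit vector field expressions
\[
X = e^{-\lambda}\left( \cos\theta\, \p_x + \sin\theta\, \p_y + (-\p_x\lambda \sin\theta + \p_y\lambda \cos\theta)\, \p_\theta \right), \qquad V = \p_\theta,
\]
the formula for $X$ being just the geodesic equation in isothermal coordinates rewritten on $SM$. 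From these two one computes $X_\perp = [X,V]$ directly; differentiating the coefficients of $X$ in $\theta$ yields the expression for $X_\perp$, and one observes it is exactly $X$ with $\theta$ replaced by $\theta + \pi/2$ (i.e.\ the generator of the flow associated to the rotated vector field $v^\perp$), which is the conceptual reason behind the name.

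Having explicit formulas for all three fields, the three identities become routine bracket computations:
\begin{enumerate}
\item[(i)] $[X,V] = X_\perp$ is the definition;
\item[(ii)] $[V,X_\perp] = X$ follows because $X_\perp$ is ``$X$ rotated by $\pi/2$'', so bracketing again with $V = \p_\theta$ rotates once more and returns $X$ up to sign — concretely, applying $\p_\theta$ to the coefficients of $X_\perp$ reproduces $-$(coefficients of the field rotated by another $\pi/2$), which is $X$;
\item[(iii)] $[X,X_\perp] = -KV$: here the only surviving term after all cancellations is a multiple of $\p_\theta = V$, and the scalar multiple is precisely the Gaussian curvature. In isothermal coordinates $K = -e^{-2\lambda}\Delta\lambda$ where $\Delta$ is the flat Laplacian, and one checks that the coefficient of $\p_\theta$ produced by the bracket equals $e^{-2\lambda}(\p_x^2\lambda + \p_y^2\lambda) = -K$, giving $[X,X_\perp] = -KV$.
\end{enumerate}
Alternatively, and more invariantly, one can identify $SM$ (locally, or globally if $M$ is oriented with trivial enough topology) with the frame bundle structure underlying the structure equations of Cartan: $X$, $X_\perp$, $V$ are dual to a coframe $(\alpha_1,\alpha_2,\psi)$ satisfying $d\alpha_1 = \alpha_2 \wedge \psi$, $d\alpha_2 = \psi \wedge \alpha_1$, $d\psi = -K\,\alpha_1 \wedge \alpha_2$, and the bracket relations are then dual to these structure equations. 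Either derivation works; I would present the isothermal-coordinate computation since it is elementary and self-contained.

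The main obstacle is purely bookkeeping: correctly deriving the coefficient of $\p_\theta$ in $X$ from the geodesic equations (getting the signs of $\p_x\lambda$ and $\p_y\lambda$ right), and then carefully tracking the terms in the double bracket $[X,X_\perp]$ so that the first-order (horizontal) terms cancel and only the curvature times $V$ remains. There is no conceptual difficulty — all three fields are explicit and smooth on $SM^{\mathrm{int}}$, and the curvature identity $K = -e^{-2\lambda}\Delta\lambda$ in isothermal coordinates is standard — but the computation of (iii) requires care. A sanity check is available: on the Euclidean disk ($\lambda \equiv 0$, $K \equiv 0$) one has $X = \cos\theta\,\p_x + \sin\theta\,\p_y$, $X_\perp = -\sin\theta\,\p_x + \cos\theta\,\p_y$, and indeed $[X,X_\perp] = 0 = -KV$, while $[V,X_\perp] = X$ and $[X,V] = X_\perp$ hold trivially; on the round sphere one can likewise verify $[X,X_\perp] = -V$.
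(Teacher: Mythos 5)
Your approach is exactly the one the paper takes: the paper relegates the proof to the exercise at the end of Section 2, which asks the reader to compute $X$, $X_\perp$, $V$ in isothermal coordinates $(x_1,x_2,\theta)$ with $g = e^{2\lambda}\delta$ and then verify the bracket relations using $K = -e^{-2\lambda}(\p_1^2\lambda + \p_2^2\lambda)$; your formulas for $X$ and $V$ agree with the paper's. One small caveat in your informal narration: $X_\perp$ is actually $X$ with $\theta$ replaced by $\theta - \pi/2$ (equivalently, $-1$ times $X$ with $\theta \mapsto \theta + \pi/2$), not $\theta + \pi/2$ as you state, but since you explicitly plan to verify the identities by direct bracket computation rather than relying on that heuristic, this does not affect the proof.
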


\begin{example}[Euclidean case]
Let $M = \ol{\mathbb{D}} \subset \mR^2$ and let $g$ be the Euclidean metric. As in Example \ref{ex_xv} we may identify $(x,v_{\theta})$ with $(x,\theta)$. Then $X_{\perp}$ has the form
\begin{align*}
X_{\perp} w &= XVw - VXw = v_{\theta} \cdot \nabla_x(\p_{\theta} w) - \p_{\theta}(v_{\theta} \cdot \nabla_x w) \\
 &= -(\p_{\theta} v_{\theta}) \cdot \nabla_x w = -v_{\theta}^{\perp} \cdot \nabla_x w.
\end{align*}
The formulas in Lemma \ref{lemma_commutator} can be checked by direct computations, e.g. 
\begin{align*}
[X, X_{\perp}] w &= XX_{\perp}w - X_{\perp} Xw = v_{\theta} \cdot \nabla_x(-v_{\theta}^{\perp} \cdot \nabla_x w) + v_{\theta}^{\perp} \cdot \nabla_x (v_{\theta} \cdot \nabla_x w) \\
 &= 0.
\end{align*}
This is consistent since $K=0$ for the Euclidean metric. For a general metric, computing $[X, X_{\perp}]$ requires commuting two covariant derivatives, and hence one expects the curvature to appear.
\end{example}

We will indicate how to prove Lemmas \ref{lemma_adjoints} and \ref{lemma_commutator} in the end of this section. Using Lemma \ref{lemma_commutator}, we can easily compute the commutator $i[A,B]$:
\begin{align*}
2i[A,B] &= XVVX - VXXV \\
 &= VXVX + X_{\perp} VX - VXVX - VXX_{\perp} \\
 &= X_{\perp} VX - VXX_{\perp} \\
 &= VX_{\perp} X - XX - VX X_{\perp} \\
 &= VKV - XX.
\end{align*}
Thus by Lemma \ref{lemma_adjoints} 
\begin{equation} \label{ab_formula}
(2i[A,B]u, u) = \norm{Xu}^2 - (KVu, Vu).
\end{equation}
We observe:
\begin{itemize}
\item 
If $g$ is the Euclidean metric, then one has $K \equiv 0$ and consequently $(i[A,B]u, u) = \norm{Xu}^2 \geq 0$.
\item 
More generally if $(M,g)$ has nonpositive curvature, i.e.\ $K \leq 0$, then $(i[A,B]u, u) \geq \norm{Xu}^2 \geq 0$.
\end{itemize}
Going back to \eqref{au_bu_formula} and using that $\norm{Au}^2 + \norm{Bu}^2 \geq 0$, we see that if $(M,g)$ is a two-dimensional simple manifold which additionally has nonpositive curvature, then 
\[
\norm{VXu}^2 \geq \norm{Xu}^2, \qquad u \in C^{\infty}_c(SM^{\mathrm{int}}).
\]
This proves Proposition \ref{prop_energy} in the (already nontrivial and interesting) case where $K \leq 0$.

To prove Proposition \ref{prop_energy} in general we need to exploit the $\norm{Au}^2$ and $\norm{Bu}^2$ terms more carefully. Using \eqref{p_decomposition} it is easy to check that 
\begin{align*}
\norm{Au}^2 + \norm{Bu}^2 &= \frac{1}{4} \norm{(P+P^*)u}^2 + \frac{1}{4} \norm{(P-P^*)u}^2 \\
 &= \frac{1}{2} \norm{Pu}^2 + \frac{1}{2} \norm{P^* u}^2.
\end{align*}
Inserting this back in \eqref{au_bu_formula} gives 
\[
\norm{Pu}^2 = \norm{P^* u}^2 + 2(i[A,B]u, u).
\]
Since $P = VX$ and $P^* = XV$, using \eqref{ab_formula} yields the identity
\[
\norm{VXu}^2 = \norm{XVu}^2 - (KVu, Vu) + \norm{Xu}^2.
\]
The identity that we have just proved is an important energy identity in the study of X-ray transforms, known as the \emph{Pestov identity}.

\begin{proposition}[Pestov identity]
If $(M,g)$ is a compact 2D Riemannian manifold with smooth boundary, then for any $u \in C^{\infty}_c(SM^{\mathrm{int}})$ one has 
\[
\norm{VXu}^2 = \norm{XVu}^2 - (KVu, Vu) + \norm{Xu}^2.
\]
\end{proposition}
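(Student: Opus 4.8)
\emph{Proof proposal.} The identity to be proved is equivalent, after moving $\norm{XVu}^2$ to the left, to
$\norm{VXu}^2 - \norm{XVu}^2 = \norm{Xu}^2 - (KVu, Vu)$, so the plan is to recognize the left-hand side as an $L^2(SM)$ pairing involving a commutator and then evaluate that commutator explicitly using the structural lemmas already recorded. Write $P := VX$. By Lemma~\ref{lemma_adjoints} the vector fields $X$ and $V$ are formally skew-adjoint on $C^{\infty}_c(SM^{\mathrm{int}})$, hence $P^* = (VX)^* = X^* V^* = XV$, and for $u \in C^{\infty}_c(SM^{\mathrm{int}})$ every integration by parts below is boundary-term free.

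First I would observe that $\norm{Pu}^2 = (Pu, Pu) = (P^*Pu, u)$ and likewise $\norm{P^*u}^2 = (PP^*u, u)$, so subtracting gives the clean formula $\norm{VXu}^2 - \norm{XVu}^2 = ([P^*,P]u, u)$ with $[P^*,P] = P^*P - PP^* = XVVX - VXXV$. This is the same operator that appears in \eqref{au_bu_formula}--\eqref{ab_formula} as $2i[A,B]$ for the decomposition $P = A + iB$; one may route the argument through that splitting instead, but the direct computation of $[P^*,P]$ is shorter and needs no complex bookkeeping.

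Next I would compute $[P^*,P]$ by sliding the vertical derivatives past the horizontal ones using Lemma~\ref{lemma_commutator}, i.e.\ the relations $[X,V] = X_{\perp}$, $[V,X_{\perp}] = X$ and $[X,X_{\perp}] = -KV$. Substituting $XV = VX + X_{\perp}$ in each term of $XVVX - VXXV$ cancels the common piece $VXVX$ and leaves $X_{\perp}VX - VXX_{\perp}$; commuting $X_{\perp}$ past the remaining $V$ via $[V,X_{\perp}] = X$ and then collecting the two terms beginning with $V$ via $[X,X_{\perp}] = -KV$ yields $[P^*,P] = VKV - XX$. Pairing with $u$ and using skew-adjointness once more, $(VKVu, u) = -(KVu, Vu)$ and $(XXu, u) = -\norm{Xu}^2$, so $([P^*,P]u, u) = \norm{Xu}^2 - (KVu, Vu)$, which is exactly the claimed identity after rearranging.

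The argument has essentially no analytic content: compact support in $SM^{\mathrm{int}}$ kills all boundary terms, and simplicity of $(M,g)$ is never used — only $\dim M = 2$. The one genuinely substantive input is Lemma~\ref{lemma_commutator}, and within it the relation $[X,X_{\perp}] = -KV$, which is where the Gaussian curvature enters and which rests on the structure equations of the unit circle bundle of a surface. Granting that lemma, the only thing that can go wrong is a sign or ordering slip when repeatedly applying $[X,V] = X_{\perp}$ to noncommuting factors, so I would carry out that step carefully and then sanity-check it against the Euclidean case $K \equiv 0$, where both sides must collapse to $\norm{VXu}^2 = \norm{XVu}^2 + \norm{Xu}^2$.
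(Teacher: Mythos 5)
Your proof is correct and follows essentially the same route as the paper: both reduce the identity to computing $[P^*,P] = XVVX - VXXV = VKV - XX$ via Lemmas~\ref{lemma_adjoints} and~\ref{lemma_commutator}, and then pair with $u$. You streamline slightly by writing $\norm{Pu}^2 - \norm{P^*u}^2 = ([P^*,P]u,u)$ directly, bypassing the paper's detour through the self/skew-adjoint split $P = A+iB$ in \eqref{p_decomposition}--\eqref{au_bu_formula}; that split is introduced in the text to motivate the positive-commutator viewpoint used for Proposition~\ref{prop_energy}, but it is indeed unnecessary if one only wants the Pestov identity itself.
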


The proof of Proposition \ref{prop_energy} is completed by the following lemma, which explicitly uses the no conjugate points assumption.

\begin{lemma}
If $(M,g)$ is a two-dimensional simple manifold, then 
\[
\norm{XVu}^2 - (KVu, Vu) \geq 0, \qquad u \in C^{\infty}_c(SM^{\mathrm{int}}).
\]
\end{lemma}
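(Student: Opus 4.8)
The plan is to reduce the inequality $\norm{XVu}^2 - (KVu, Vu) \geq 0$ to a one-dimensional Sturm comparison statement along individual geodesics, exactly the place where the absence of conjugate points enters. First I would decompose in the vertical direction: since $V$ generates the circle action on each fiber $S_x M$, any $u \in C^\infty_c(SM^{\mathrm{int}})$ expands in a Fourier series $u = \sum_{k \in \mZ} u_k$ with $Vu_k = iku_k$, and $X$ shifts the Fourier modes by one (this is the consequence of the first commutator formula $[X,V] = X_\perp$ together with the splitting $X = X_+ + X_-$ into raising and lowering parts). One checks that $\norm{XVu}^2 - (KVu,Vu) = \sum_k \big( \norm{Xu_k}^2 \cdot k^2 \cdot(\text{bookkeeping}) - \ldots \big)$; the cleaner route, however, is not to expand but to argue directly with the function $w := Vu$ and aim to control $(Kw,w)$ by $\norm{Xw}^2$ plus a term that integrates to something nonnegative.

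The workhorse is the following. For $w \in C^\infty_c(SM^{\mathrm{int}})$ one wants a pointwise-along-geodesics inequality: along each geodesic $t \mapsto \varphi_t(x,v)$, writing $\phi(t) = w(\varphi_t(x,v))$, one has $Xw$ restricting to $\phi'(t)$, and the simplicity (no conjugate points) guarantees the existence of a smooth solution $r(t)$ of the scalar Riccati/Jacobi equation $\dot r + r^2 + K(\gamma(t)) = 0$ along the whole geodesic segment — this is precisely the statement that the Jacobi equation $\ddot y + K y = 0$ has no conjugate points, so a positive solution $y$ exists and $r = \dot y / y$ works on the full segment. Then for any scalar function $\phi$ vanishing at the endpoints,
\[
\int \big( \phi'(t)^2 - K(\gamma(t)) \phi(t)^2 \big) \,dt = \int \big( \phi'(t) + r(t)\phi(t)\big)^2 \,dt - \big[ r(t)\phi(t)^2 \big]_{\text{endpoints}} \geq 0,
\]
the boundary term vanishing because $\phi$ has compact support in the interior. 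Here one is using $-K\phi^2 = (\dot r + r^2)\phi^2$ and completing the square; this is the standard Sturm trick and the one genuine input from the no-conjugate-points hypothesis.

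Assembling this requires care about the measure. The natural thing is to foliate $SM$ by geodesics and integrate the one-dimensional identity over the space of geodesics against the invariant (Liouville-type) measure on $SM$; since $X$ is the generator of the geodesic flow and the Liouville measure $d\Sigma$ is flow-invariant (equivalently, $X$ is formally skew-adjoint, Lemma \ref{lemma_adjoints}), Fubini in the form of the coarea/Santaló formula turns $\int_{SM}\big((Xw)^2 - K w^2\big)\,d\Sigma$ into an integral over geodesics of $\int\big(\phi'^2 - K\phi^2\big)\,dt$, which we have just shown is $\geq 0$. Applying this with $w = Vu$ gives exactly $\norm{XVu}^2 - (KVu,Vu) \geq 0$. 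The main obstacle I anticipate is the bookkeeping needed to promote the Riccati solution to a smooth function on $SM^{\mathrm{int}}$ that is uniform enough for the global integration — one must verify that the solution $r$ built geodesic-by-geodesic depends smoothly on $(x,v)$ (this uses that on a simple manifold the relevant Jacobi fields never vanish, so the construction is nondegenerate) and that all boundary terms genuinely vanish given $u \in C^\infty_c(SM^{\mathrm{int}})$; everything else is the positive-commutator/complete-the-square computation already set up in the preceding pages.
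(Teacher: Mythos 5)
Your proof is correct and follows the same overall strategy as the paper's: apply the Santal\'o formula to $\abs{XVu}^2 - K\abs{Vu}^2$ to reduce to a one-dimensional inequality along each maximal geodesic, and then invoke the absence of conjugate points to establish that 1D inequality. The only real difference is in how the 1D inequality is justified. The paper cites the nonnegativity of the index form $I_{\gamma}(Y,Y)=\int(\abs{D_t Y}^2-K\abs{Y}^2)\,dt$ for normal fields vanishing at the endpoints, taking as known that no conjugate points implies $I_\gamma \geq 0$; you instead prove the scalar inequality from scratch by the Riccati substitution $r=\dot y/y$ for a positive Jacobi solution $y$ and complete the square, $\int(\phi'^2-K\phi^2)\,dt = \int(\phi'+r\phi)^2\,dt \geq 0$. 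This is precisely the standard proof of index-form nonnegativity unfolded, so the two routes are mathematically equivalent, with yours being marginally more self-contained.

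Two small remarks. First, the smoothness-in-$(x,v)$ concern you flag at the end is a non-issue: the Santal\'o formula is applied to $\abs{XVu}^2-K\abs{Vu}^2$, which is already a smooth function on $SM$, and you only need the pointwise inequality $\int(\phi'^2-K\phi^2)\,dt\geq 0$ to hold geodesic-by-geodesic; no regularity of the Riccati solution in the footpoint $(x,v)$ is required to integrate a nonnegative quantity. Second, to get a positive scalar Jacobi solution $y$ on the entire closed segment $[0,\tau(x,v)]$ (so that $r=\dot y/y$ is finite there) one should take the zero of $y$ slightly outside $M$ using the open extension $(U,g)$ of Lemma~\ref{lemma_simple_exp}; but since $\phi=Vu\circ\varphi_t$ is compactly supported in the open interval, even a blowup of $r$ at the endpoints would be harmless, so your argument goes through as written.
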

\begin{proof}
If $\gamma: [0,\tau] \to M$ is a geodesic segment, we recall the \emph{index form} (see \cite[Section 3.7]{PSU_book})
\[
I_{\gamma}(Y, Y) = \int_0^{\tau} (\abs{D_t Y(t)}_g^2 - K(\gamma(t)) \abs{Y(t)}_g^2) \,dt
\]
defined for vector fields $Y$ along $\gamma$ that are normal to $\dot{\gamma}$. This is the bilinear form associated with the Jacobi equation $-D_t^2 J(t) - K(\gamma(t)) J(t) = 0$. The basic property is that $\gamma$ has no conjugate points iff $I_{\gamma}(Y, Y) > 0$ for all normal vector fields $Y \not \equiv 0$ along $\gamma$ that vanish at the endpoints.

We will also need the \emph{Santal\'o formula} (see \cite[Section 3.5]{PSU_book}), which is a change of variables formula on $SM$ and states that 
\[
\int_{SM} w \,d\Sigma = \int_{\p_+ SM} \left[ \int_0^{\tau(x,v)} w(\varphi_t(x,v)) \,dt \right] \mu \,d(\p SM)
\]
where $\p_+ SM = \{ (x,v) \in \p(SM) \,:\, \langle v, \nu \rangle_g \leq 0 \}$ and $\mu = -\langle v, \nu \rangle_g$, with $\nu$ being the outward unit normal to $\p M$.
Applying the Santal\'o formula to $w = \abs{XVu}^2 - K \abs{Vu}^2$, and using for any $(x,v) \in \p_+ SM$ the normal vector field 
\[
Y_{x,v}(t) := Vu(\varphi_t(x,v)) \dot{\gamma}(t)^{\perp}
\]
along $\gamma_{x,v}$, implies that 
\begin{align*}
 &\norm{XVu}^2 - (KVu, Vu) \\
 &= \int_{\p_+ SM} \left[ \int_0^{\tau(x,v)} (\abs{XVu(\varphi_t(x,v))}^2 - K(\gamma_{x,v}(t)) \abs{Vu(\varphi_t(x,v))}^2) \,dt \right] \mu \,d(\p SM) \\
 &= \int_{\p_+ SM} \left[ \int_0^{\tau(x,v)} (\abs{D_t Y_{x,v}(t)}^2 - K(\gamma_{x,v}(t)) \abs{Y_{x,v}(t)}^2) \,dt \right] \mu \,d(\p SM) \\
 &=  \int_{\p_+ SM} I_{\gamma_{x,v}}(Y_{x,v}, Y_{x,v}) \mu \,d(\p SM).
\end{align*}
The last quantity is $\geq 0$, since the index form is nonnegative by the no conjugate points condition.
\end{proof}

\begin{remark}
If $(M,g)$ is simple and $n = \dim(M) \geq 3$, the same scheme as above can be used to prove that the geodesic X-ray transform is injective. However, the vector fields $V$ and $X_{\perp}$ need to be replaced by suitable vertical and horizontal gradient operators $\vd$ and $\hd$, and the Pestov identity takes the form 
\[
\norm{\vd Xu}^2 = \norm{X \vd u}^2 - (R \vd u, \vd u) + (n-1) \norm{Xu}^2
\]
where $R Z(x,v) := R_x(Z, v) v$ is the Riemann curvature tensor. We refer the reader to \cite[Section 4.7]{PSU_hd} for more details.
\end{remark}

Finally we discuss the proof of Lemmas \ref{lemma_adjoints} and \ref{lemma_commutator}. One way to prove them is via local coordinate computations. There is a particularly useful coordinate system for this, known as \emph{isothermal coordinates}. The existence of global isothermal coordinates is part of the uniformization theorem for Riemann surfaces. It boils down to the following generalization of the Riemann mapping theorem from simply connected planar domains to simply connected Riemann surfaces. Here we use the basic fact that any simple manifold is diffeomorphic to a ball and hence simply connected \cite[Section 3.8]{PSU_book}.

\begin{theorem}[Global isothermal coordinates] \label{thm_isothermal}
Let $(M,g)$ be a compact oriented simply connected two-dimensional manifold with smooth boundary. There are global coordinates $x = (x_1, x_2)$ on $M$ so that in these coordinates the metric has the form 
\[
g_{jk}(x) = e^{2\lambda(x)} \delta_{jk}
\]
for some real $\lambda \in C^{\infty}(M)$.
\end{theorem}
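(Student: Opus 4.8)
The plan is to build global isothermal coordinates in three stages: first produce \emph{local} isothermal coordinates everywhere, then use them to regard $(M,g)$ as a Riemann surface, and finally apply the generalized Riemann mapping theorem to collapse this atlas to a single global chart. (I tacitly assume $\partial M \neq \emptyset$; this is necessary for a single global chart to exist, since $S^2$ admits none.)

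\emph{Local isothermal coordinates.} Near a point $p \in M$ I would solve $\Delta_g u = 0$ on a small coordinate ball around $p$ with boundary data chosen so that $du(p) \neq 0$ --- e.g.\ prescribe one of the ambient linear coordinates and note that the solution has $du$ close to a nonzero constant near the center --- then shrink so that $du$ never vanishes. Since $u$ is harmonic the $1$-form $\star_g du$ is closed, hence exact on the (simply connected) ball, so $\star_g du = dv$ for a smooth $v$. Because the Hodge star acts pointwise as an isometry on $1$-forms in dimension two and $\langle du, \star_g du\rangle_g = 0$, the pair $(u,v)$ is an orthogonal coordinate system with $\abs{du}_g = \abs{dv}_g$; equivalently $g = e^{2\mu}(du^2 + dv^2)$ with $\mu = -\log\abs{du}_g$ smooth, i.e.\ an isothermal chart. (Alternatively one may quote the Korn--Lichtenstein theorem.)

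\emph{Riemann surface structure and globalization.} Any two isothermal charts overlap via a smooth orientation-preserving conformal diffeomorphism of planar domains, which is holomorphic by the Cauchy--Riemann equations; hence, selecting the orientation-compatible isothermal charts, $(M,g)$ becomes a compact, oriented, simply connected Riemann surface with smooth boundary whose complex structure records exactly the conformal class of $g$. (Topologically $M$ is already a disk, by the classification of compact orientable bordered surfaces; the content here is the conformal refinement.) At this point I invoke the bordered case of uniformization --- the generalization of the Riemann mapping theorem alluded to above: such a surface is biholomorphic to $\ol{\mathbb{D}}$, and since $\partial M$ is smooth the biholomorphism $F$ extends to a diffeomorphism up to the boundary. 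Concretely $F$ can be built from the Green's function $G_p$ of $(M,g)$ with an interior pole at $p$ (so $\Delta_g G_p = 0$ off $p$, $G_p + \frac{1}{2\pi}\log d_g(\cdot,p)$ smooth near $p$, $G_p|_{\partial M} = 0$): simple connectivity annihilates all periods of the conjugate form $\star_g dG_p$ except the residue at $p$, so with $dH_p = \star_g dG_p$ the function $F := e^{-2\pi(G_p + iH_p)}$ is single-valued, holomorphic, vanishes only at $p$, and maps $M$ biholomorphically onto $\ol{\mathbb{D}}$.

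\emph{Conclusion and main obstacle.} Set $x = (x_1,x_2) := (\re F, \im F)$, a global coordinate system on $M$. In any local isothermal chart $z$ from the first stage one has $g = e^{2\mu}\abs{dz}^2$, while $z$ and $F$ differ by a holomorphic change of variables, so $dz = h\,dF$ with $h$ holomorphic and nowhere zero; hence $g = e^{2\mu}\abs{h}^2\abs{dF}^2 = e^{2\lambda}(dx_1^2 + dx_2^2)$ with $\lambda := \mu + \log\abs{h}$ smooth and, being patch-independent, globally defined (equivalently $\lambda = \frac{1}{2}\log g_{11}$ in the $x$-coordinates, smooth up to $\partial M$ since $g$ is). Thus $g_{jk} = e^{2\lambda}\delta_{jk}$. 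The main obstacle is the second stage: the bordered Riemann mapping/uniformization statement together with boundary regularity of $F$, which carries essentially all the analytic weight --- via solvability and the critical-point analysis of the Green's function, or a reflection argument across the smooth boundary. Since the theorem is explicitly framed as ``part of the uniformization theorem,'' I would cite this ingredient and concentrate the write-up on the first and third stages, which are the parts genuinely about the metric $g$.
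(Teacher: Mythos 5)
The paper does not actually prove this theorem: it states only that the result ``is part of the uniformization theorem'' and ``boils down to'' a generalization of the Riemann mapping theorem to simply connected Riemann surfaces, and your sketch fills in precisely that route. The three-stage outline you give (local isothermal charts via harmonic conjugates, the induced Riemann surface structure, the bordered uniformization via the Green's function together with boundary regularity of $F$) is correct, and you correctly locate the essential analytic content in the bordered Riemann mapping step, which is exactly what the paper delegates to the literature.
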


The isothermal coordinates induce global coordinates $(x_1, x_2, \theta)$ on $SM$ where $\theta \in (-\pi, \pi]$ is the angle between $v$ and $\p/\p x_1$, i.e. 
\[
v = e^{-\lambda(x)}(\cos \theta \frac{\p}{\p x_1} + \sin \theta \frac{\p}{\p x_2}).
\]
We conclude this chapter with exercises that in particular contain the proof of Lemmas \ref{lemma_adjoints} and \ref{lemma_commutator}.

\begin{exercise}
Prove the following stability result for the Radon transform in $\mR^2$:
\[
\norm{f}_{L^2(\mR^2)} \leq \frac{1}{\sqrt{2}} \norm{Rf}_{H^{1/2}_T(\mR \times S^1)}, \qquad f \in C^{\infty}_c(\mR^2),
\]
where we use the norm $\norm{Rf}_{H^{s}_T(\mR \times S^1)} = \norm{(1+\sigma^2)^{s/2} (Rf)\etilde(\sigma, \omega)}_{L^2(\mR \times S^1)}$.
\end{exercise}

\begin{exercise}
Let $(M,g)$ be a compact oriented simply connected two-dimensional manifold with smooth boundary. Use the $(x_1, x_2)$ and $(x_1, x_2, \theta)$ coordinates above to do the following (see \cite[Section 3.5]{PSU_book} for hints if needed):
\begin{enumerate}
\item[(a)] 
Compute the Christoffel symbols $\Gamma_{jk}^l(x)$.
\item[(b)]
Show that $X$, $X_{\perp}$ and $V$ are given by 
\begin{align*}
X &= e^{-\lambda}\left(\cos\theta\frac{\partial}{\partial x_{1}}+
\sin\theta\frac{\partial}{\partial x_{2}}+
\left(-\frac{\partial \lambda}{\partial x_{1}}\sin\theta+\frac{\partial\lambda}{\partial x_{2}}\cos\theta\right)\frac{\partial}{\partial \theta}\right), \\
X_{\perp} &= -e^{-\lambda}\left(-\sin\theta\frac{\partial}{\partial x_{1}}+
\cos\theta\frac{\partial}{\partial x_{2}}-
\left(\frac{\partial \lambda}{\partial x_{1}}\cos\theta+\frac{\partial \lambda}{\partial x_{2}}\sin\theta\right)\frac{\partial}{\partial \theta}\right), \\
V &= \frac{\partial}{\partial\theta}.
\end{align*}
\emph{Hint.} To compute $X$, you can use the equation $\tan \theta(t) = \frac{\dot{x}_2(t)}{\dot{x}_1(t)}$ where $(x_1(t), x_2(t), \theta(t))$ is a geodesic in the $(x_1, x_2, \theta)$ coordinates.
\item[(c)] 
Prove Lemma \ref{lemma_adjoints}. You can use (b) and the fact that 
\[
\int_{SM} w \,d\Sigma = \int_M \int_{-\pi}^{\pi} w(x,\theta) e^{2\lambda(x)} \,d\theta \,dx.
\]
\item[(d)] 
Prove Lemma \ref{lemma_commutator}. You can use (b) and the fact that if $g_{jk}(x) = e^{2\lambda(x)} \delta_{jk}$, then the Gaussian curvature has the form 
\[
K = -\Delta_g \lambda = -e^{-2\lambda}(\p_1^2 \lambda + \p_2^2 \lambda).
\]
\end{enumerate}
\end{exercise}

%Intuitively, a manifold is simple iff the boundary is strictly convex and if the whole manifold can be parametrized by geodesic rays starting from any fixed point. This means global polar coordinates, which sometimes acts an analog for the parametrization $y = s\omega^{\perp} + t\omega$ of $\mR^2$ used in the discussion of the Radon transform in the plane.

%If $(M,g)$ is simple, one can always find an open manifold $(U,g)$ such that $(M,g) \subset \subset (U,g)$ where $(\closure{U},g)$ is simple. We will always understand that $(M,g)$ and $(U,g)$ are related in this way.

\section{Gelfand problem} \label{sec_gelfand}

Seismic imaging gives rise to various inverse problems related to determining interior properties, e.g.\ oil deposits or deep structure, of the Earth. Often this is done by using acoustic or elastic waves. We will consider the following problem proposed in \cite{Gelfand}. This problem has many names and equivalent forms and it is also known as the \emph{inverse boundary spectral problem} \cite{KKL} or the \emph{Lorentzian Calder\'on problem} \cite{AFO}.

\begin{quote}
{\bf Gelfand problem:} Is it possible to determine the interior structure of Earth by controlling acoustic waves and measuring vibrations at the surface?
\end{quote}

In seismic imaging one often tries to recover an unknown sound speed. However, in this presentation we consider the simpler case where the sound speed is known and one attempts to recover an unknown potential $q$. We assume that the Earth is modelled by a compact Riemannian $n$-manifold $(M,g)$ with smooth boundary (in practice $M$ is a closed ball in $\mR^3$), and the metric $g$ models the sound speed. In fact, if $c(x)$ is a scalar sound speed in a domain in $\mR^n$, the corresponding metric is 
\[
g_{jk}(x) = c(x)^{-2} \delta_{jk}.
\]
A general metric $g$ corresponds to an \emph{anisotropic} (non-scalar) sound speed. Thus Riemannian geometry already appears when considering sound speeds in Euclidean domains.

Consider the free wave operator 
\[
\Box := \p_t^2 - \Delta
\]
in $M \times (0,T)$, where $\Delta$ is the Laplace-Beltrami operator in $(M,g)$:
\[
\Delta u = \mathrm{div} (\nabla u) = \det(g)^{-1/2} \p_j(\det(g)^{1/2} g^{jk} \p_k u).
\]
Here the operators $\nabla = \nabla_g$, $\mathrm{div} = \mathrm{div}_g$, and $\Delta = \Delta_g$ only act in the $x$ variable. Let $q \in C^{\infty}_c(M^{\mathrm{int}})$ be a time-independent real valued potential.

We assume that the medium is at rest at time $t=0$ and that we take measurements until time $T > 0$. If we prescribe the amplitude of the wave to be $f(x,t)$ on $\p M \times (0,T)$, this leads to a solution $u$ of the wave equation 
\begin{equation} \label{wavedp}
\left\{ \begin{array}{rll}
(\Box + q) u &\!\!\!= 0 & \quad \text{in } M \times (0,T), \\
u &\!\!\!= f & \quad \text{on } \partial M \times (0,T), \\
u = \p_t u &\!\!\!= 0 & \quad \text{on } \{ t = 0 \}.
\end{array} \right.
\end{equation}
Given any $f \in C^{\infty}_c(\p M \times (0,T))$, this initial-boundary value problem has a unique solution $u \in C^{\infty}(M \times (0,T))$ (see \cite[Theorem 7 in \S 7.2.3]{Evans} for the Euclidean case; the proof in the Riemannian case is the same). We assume that we can measure the normal derivative $\p_{\nu} u|_{\p M \times (0,T)}$, where $\p_{\nu} u(x,t) = \langle \nabla u(x,t), \nu(x) \rangle$ and $\nu$ is the outer unit normal to $\p M$. We do such measurements for many different functions $f$.

The ideal boundary measurements in our inverse problem are therefore encoded by the \emph{hyperbolic Dirichlet-to-Neumann map} (DN map for short) 
\[
\Lambda_q: C^{\infty}_c(\p M \times (0,T)) \to C^{\infty}(\p M \times (0,T)), \ \ \Lambda_q(f) = \p_{\nu} u|_{\p M \times (0,T)}.
\]
The Gelfand problem for this model amounts to recovering $q$ from the knowledge of the map $\Lambda_q$.

The following is our main result for the Gelfand problem. For simplicity we assume that the potentials are compactly supported in $M^{\mathrm{int}}$.

\begin{theorem}[Recovering the X-ray transform] \label{thm_gelfand_xray}
Let $(M,g)$ be compact with smooth boundary, let $T > 0$, and assume that $q_1, q_2 \in C^{\infty}_c(M^{\mathrm{int}})$. If $\Lambda_{q_1} = \Lambda_{q_2}$, then 
\begin{equation} \label{wave_xray}
\int_0^{\ell} q_1(\gamma(t)) \,dt = \int_0^{\ell} q_2(\gamma(t)) \,dt
\end{equation}
whenever $\gamma: [0,\ell] \to M$ is a non-trapped maximal geodesic in $M$ with $\ell < T$.
\end{theorem}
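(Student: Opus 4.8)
The plan is to use the standard strategy for hyperbolic inverse problems: exploit the fact that $\Lambda_{q_1} = \Lambda_{q_2}$ together with finite speed of propagation to get information about the difference $q := q_1 - q_2$ via an integral identity, and then extract the X-ray transform by choosing wave solutions that concentrate along a fixed geodesic. First I would derive the \emph{integral identity}: if $u_1$ solves $(\Box + q_1)u_1 = 0$ with boundary data $f$ and zero Cauchy data at $t=0$, and if $v$ solves the adjoint equation $(\Box + q_2)v = 0$ with zero \emph{final} Cauchy data at $t = T$ (so a backward solution), then multiplying $(\Box + q_1)u_1 - (\Box + q_2)u_1 = -q u_1$ by $v$, integrating over $M \times (0,T)$, and integrating by parts in both $t$ and $x$, all boundary terms at $t=0$ and $t=T$ vanish by the Cauchy conditions, and the spatial boundary terms on $\partial M \times (0,T)$ involve $\partial_\nu u_1 = \Lambda_{q_1}(f) = \Lambda_{q_2}(f) = \partial_\nu u_2$ and $u_1 = u_2 = f$, hence also cancel. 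This yields
\[
\int_0^T \int_M q\, u_1 \bar v \, dV_g \, dt = 0
\]
for all admissible $f$, i.e.\ for all solutions $u_1$ of $(\Box+q_1)u_1=0$ with vanishing Cauchy data at $t=0$ and all solutions $v$ of $(\Box+q_2)v=0$ with vanishing Cauchy data at $t=T$. (One has to be slightly careful: the test solution $v$ need not have compactly supported boundary data, so an approximation/density argument or a direct energy estimate is needed to justify the class of $v$ allowed; this is routine.)

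Next I would \emph{construct geometric optics (WKB) solutions} concentrating on the given geodesic $\gamma : [0,\ell] \to M$. Fix such a non-trapped maximal geodesic with $\ell < T$; extend it slightly past $\partial M$ in the ambient manifold. For a large parameter $\lambda$, I seek approximate solutions of the form
\[
u_\lambda(x,t) = e^{i\lambda(\varphi(x) - t)} \big( a_0(x,t) + \lambda^{-1} a_1(x,t) + \cdots \big),
\]
where $\varphi$ is a phase function solving the eikonal equation $|\nabla_g \varphi|_g^2 = 1$ with $\nabla_g\varphi$ equal to $\dot\gamma$ along $\gamma$ (so the characteristic $\{\varphi(x) = t\}$ carries energy along $\gamma$), and $a_0$ solves the transport equation along the flow; choosing $a_0$ supported in a thin tube around $\gamma$ and initially a bump, the leading amplitude along $\gamma$ is (up to a known factor coming from the Jacobian of the geodesic spray / the half-density normalization) essentially constant, or more precisely one arranges $|a_0|^2$ to integrate correctly. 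One builds $u_{1,\lambda}$ of this type solving $(\Box + q_1)u_{1,\lambda} = 0$ exactly (correcting the WKB ansatz by a small remainder via an energy estimate), with Cauchy data vanishing at $t=0$ by placing the wave packet so that it has not yet entered $M$ at $t=0$ — possible since $\ell < T$ and the geodesic is non-trapped, so we can let the packet enter through $\partial M$ after time $0$ and exit before time $T$. Similarly one builds $v_\lambda$ with the \emph{same} phase $\varphi - t$ (conjugated appropriately) solving $(\Box + q_2)v_\lambda = 0$ with vanishing Cauchy data at $t=T$.

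Then I would \emph{insert these into the integral identity and pass to the limit $\lambda \to \infty$}. The product $u_{1,\lambda}\bar v_\lambda$ has the oscillatory factors cancel (both carry $e^{i\lambda(\varphi - t)}$, one conjugated), leaving $|a_0|^2 + O(\lambda^{-1})$ supported in a shrinking tube about $\gamma$. Writing the $x$-integral in Fermi (boundary-normal) coordinates around $\gamma$ and using that the amplitude is concentrated near $\gamma$, the identity $\int_0^T\int_M q\, u_{1,\lambda}\bar v_\lambda\,dV_g\,dt = 0$ becomes, in the limit, a constant multiple of $\int_0^\ell q(\gamma(t))\,dt = 0$; here one uses the $t$-integration to localize to the characteristic and the transverse Gaussian-type profile of $a_0$ to reduce the transverse integral to the value on $\gamma$. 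This gives $\int_0^\ell q_1(\gamma(t))\,dt = \int_0^\ell q_2(\gamma(t))\,dt$, which is \eqref{wave_xray}.

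\textbf{Main obstacle.} The technical heart is the geometric optics construction: solving the eikonal equation globally enough along $\gamma$ (the phase $\varphi$ is only defined near $\gamma$, which suffices but requires care that the packet support stays where $\varphi$ is smooth — possible precisely because $\gamma$ is a geodesic, hence has a neighborhood with good normal coordinates, and it is non-trapped so it spends finite time in $M$), controlling the transport equations and the Jacobian factors so that the limit picks out exactly the unweighted integral $\int_0^\ell q(\gamma)\,dt$, and estimating the WKB remainder in an energy norm strong enough that it does not contribute in the limit. The condition $\ell < T$ is exactly what is needed to fit the whole life of the wave packet (from entering $\partial M$ to exiting) strictly inside the time interval $(0,T)$, so that both the $t=0$ Cauchy condition for $u_{1,\lambda}$ and the $t=T$ Cauchy condition for $v_\lambda$ hold. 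Everything else — the integration-by-parts identity and the density argument for the test functions — is standard.
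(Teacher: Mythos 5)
Your overall strategy -- integral identity, construct WKB solutions concentrating along the geodesic, insert and pass to the limit -- is the same as the paper's. The integral identity you derive is essentially Lemma \ref{lemma_gelfand_integral_identity}, and the insert-and-limit step is essentially the paper's proof of Theorem \ref{thm_gelfand_xray} from Proposition \ref{prop_gelfand_concentrating_solutions}.

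The genuine gap is in the geometric optics construction, and it is precisely the place you flag as the ``main obstacle'' but then dismiss as a matter of care rather than a matter of method. The theorem makes no simplicity assumption: $(M,g)$ is an arbitrary compact manifold with smooth boundary, and $\gamma$ is an arbitrary non-trapped maximal geodesic, which may well carry conjugate points. Your ansatz uses a \emph{real} phase $\varphi(x)-t$ solving the eikonal equation $\abs{\nabla_g\varphi}_g^2=1$ with $\nabla_g\varphi=\dot\gamma$ along $\gamma$. Such a real solution cannot in general be found smoothly in a tube around $\gamma$: the level sets of $\varphi$ are hypersurfaces swept out by the geodesics normal to them, and at a conjugate point along $\gamma$ these characteristics focus, producing a caustic where $\varphi$ fails to be smooth and the transport Jacobian degenerates. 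Having ``good normal (Fermi) coordinates'' around $\gamma$ is not the issue -- Fermi coordinates exist along any embedded geodesic segment -- the issue is that the eikonal PDE itself has no smooth real solution past the first conjugate point. Consequently your construction, as written, only works on simple manifolds (or more precisely along geodesics without conjugate points), which is exactly the scope of the paper's Section \ref{sec_wave_simple}, where the phase is built from the globally defined distance function $r(x)=d_g(\eta(-\sigma),x)$ using Lemma \ref{lemma_simple_exp}.

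To prove the theorem in the generality stated, the paper replaces the real phase by a \emph{Gaussian beam}: a complex phase $\varphi$ whose eikonal equation is only solved to infinite order \emph{on} the curve, and whose Hessian $H(t)$ (a solution of a matrix Riccati equation) is arranged to have $\mathrm{Im}\,H(t)$ positive definite for all $t$. The positivity of $\mathrm{Im}\,H(t)$ is what makes the amplitude Gaussian-concentrated around $\gamma$ \emph{and} what survives conjugate points, because the complexified Jacobi/Riccati flow never degenerates when the initial imaginary part is positive definite (the key point of the lemma on the matrix Riccati equation in Section \ref{subseq_gaussian_beam}). Without this ingredient your proposal proves a weaker statement (the simple or no-conjugate-points case) and does not establish Theorem \ref{thm_gelfand_xray} as stated.
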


If $(M,g)$ is simple, we can combine the above result with injectivity of the geodesic X-ray transform (Theorem \ref{thm_xray}) to obtain a uniqueness result:

\begin{theorem}[Uniqueness] \label{thm_gelfand_uniqueness}
Assume that $(M,g)$ is simple, and let $T > 0$ be larger than the length of the longest maximal geodesic in $M$. If $q_1, q_2 \in C^{\infty}_c(M^{\mathrm{int}})$ and 
\[
\Lambda_{q_1} = \Lambda_{q_2},
\]
then $q_1 = q_2$ in $M$.
\end{theorem}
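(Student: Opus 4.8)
The plan is simply to chain together the two results already in hand: Theorem~\ref{thm_gelfand_xray}, which reads off the geodesic X-ray transform of the potential from the hyperbolic DN map, and Theorem~\ref{thm_xray}, which asserts injectivity of that transform on simple manifolds. No new analytic input is needed; the work is in checking that the hypotheses match up.

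First I would put $q := q_1 - q_2 \in C^{\infty}_c(M^{\mathrm{int}})$. Since $\Lambda_{q_1} = \Lambda_{q_2}$, Theorem~\ref{thm_gelfand_xray} yields $\int_0^{\ell} q(\gamma(t))\,dt = 0$ for every non-trapped maximal geodesic $\gamma : [0,\ell] \to M$ with $\ell < T$. Next I would observe that this length restriction is vacuous under the present hypotheses: simplicity includes the nontrapping condition, so every maximal geodesic in $M$ has finite length, and by assumption $T$ exceeds the length $L$ of the longest such geodesic. Hence for every $(x,v) \in \p(SM)$ the geodesic $\gamma_{x,v}$ has length $\tau(x,v) \le L < T$, and the displayed identity gives $Iq(x,v) = \int_0^{\tau(x,v)} q(\gamma_{x,v}(t))\,dt = 0$. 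Thus $Iq \equiv 0$ on $\p(SM)$.

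Finally, $q \in C^{\infty}(M)$ and $(M,g)$ is simple, so Theorem~\ref{thm_xray} forces $q = 0$, i.e.\ $q_1 = q_2$ in $M$. The only point requiring any care is the middle step --- making sure that the family of ``non-trapped maximal geodesics of length $< T$'' appearing in Theorem~\ref{thm_gelfand_xray} genuinely exhausts all the geodesics used to define $I$ in Section~\ref{section_ray_transform} --- and this is precisely where the nontrapping part of simplicity and the assumption on $T$ enter. (Strict convexity of $\p M$, also part of simplicity, guarantees in addition that $\tau$ is well-behaved, so that $I$ acts on smooth functions as in Section~\ref{section_ray_transform}.) I do not expect any genuine obstacle beyond this bookkeeping, since the substantive analysis is carried out in Theorems~\ref{thm_xray} and~\ref{thm_gelfand_xray}.
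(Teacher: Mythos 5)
Your argument is correct and is essentially the same as the paper's proof: combine Theorem~\ref{thm_gelfand_xray} with the injectivity of the geodesic X-ray transform on simple manifolds (Theorem~\ref{thm_xray}), noting that nontrapping plus the hypothesis on $T$ makes the length restriction in Theorem~\ref{thm_gelfand_xray} vacuous. The extra bookkeeping you spell out is exactly the content the paper compresses into one sentence.
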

\begin{proof}
The assumption on $T$ together with Theorem \ref{thm_gelfand_xray} imply that the integrals of $q_1$ and $q_2$ along any maximal geodesic are the same, i.e.\ $I q_1 = I q_2$ where $I$ is the geodesic X-ray transform. Since $(M,g)$ is simple, Theorem \ref{thm_xray} gives that $q_1=q_2$.
\end{proof}

\begin{remark}
It is natural that one needs $T$ to be sufficiently large in Theorem \ref{thm_gelfand_uniqueness}. By finite propagation speed the map $\Lambda_q$ is unaffected if one changes $q$ outside the set $\{ x \in M \,:\, \mathrm{dist}(x,\p M) < T/2 \}$.\footnote{If $u$ and $\tilde{u}$ solve \eqref{wavedp} for potentials $q$ and $\tilde{q}$ with the same Dirichlet data $f$, and if $q = \tilde{q}$ in $U := \{ x \in M  \,:\, \mathrm{dist}(x,\p M) < T/2 \}$, then $w := u - \tilde{u}$ solves $(\Box + q)w = F$ where $F := -(q-\tilde{q})\tilde{u}$ vanishes in $U \times (0,T)$ and also in $(M \setminus U) \times (0,T/2)$ since $\tilde{u}$ vanishes there. Moreover, $w = \p_t w = 0$ on $\{ t=0 \}$ and $w|_{\p M \times (0,T)} = 0$. By finite speed of propagation $\p_{\nu} w|_{\p M \times (0,T)} = 0$. This proves that $\Lambda_q = \Lambda_{\tilde{q}}$.}
\end{remark}

Theorem \ref{thm_gelfand_xray} could be proved based on the following facts, see e.g.\ \cite{StefanovYang}:
\begin{enumerate}
\item[1.]
The map $\Lambda_q$ is an FIO of order $1$ on $\p M \times (0,T)$.
\item[2.]
The X-ray transform of $q$ can be read off from the principal symbol of $\Lambda_q - \Lambda_0$.
\end{enumerate}
We will give a direct proof that avoids the first step and is based on testing $\Lambda_q$ against highly oscillatory boundary data. This follows the idea that the principal symbol of a $\Psi$DO (or FIO) can be obtained by testing against oscillatory functions, e.g.\ 
\begin{equation} \label{principal_symbol_oscillatory}
\sigma_{\mathrm{pr}}(A)(x_0,\xi_0) = \lim_{\lambda \to \infty} \lambda^{-m} e^{-i\lambda x \cdot \xi_0} A(e^{i \lambda x \cdot \xi_0}) \Big|_{x=x_0}
\end{equation}
when $A$ is a classical $\Psi$DO of order $m$ in $\mR^n$. Our proof of Theorem \ref{thm_gelfand_xray} is based on \emph{geometric optics solutions} and will be done in two parts. First we assume that $(M,g)$ is simple and use a classical geometric optics construction with real phase function. In the general case we employ a Gaussian beam construction with complex phase function.

Theorem \ref{thm_gelfand_uniqueness} is in fact true even for a general compact manifold $(M,g)$ under the sharp condition $T > 2 \sup_{x \in M} \mathrm{dist}(x, \p M)$. This and many other results for \emph{time-independent} coefficients follow from the \emph{Boundary Control method} introduced in \cite{Belishev}, see \cite{KKL, Lassas} for further developments. However, the Boundary Control method is not in general available when $q = q(x,t)$ is \emph{time-dependent}. This case arises in inverse problems for nonlinear equations or in general relativity. In that case (and if one considers the analogous problem on $\p M \times \mR$ instead of $\p M \times (0,T)$, see Exercise \ref{ex_wave}), the geometric optics method still works and gives that 
\begin{equation} \label{light_ray}
\int_0^{\ell} q_1(\gamma(t), t+\sigma) \,dt = \int_0^{\ell} q_2(\gamma(t), t+\sigma) \,dt
\end{equation}
whenever $\gamma$ is a maximal geodesic as above and $\sigma \in \mR$ is a time-delay parameter. This means that the \emph{light ray transforms} of $q_1$ and $q_2$ are the same. The curves $(\gamma(t), t+\sigma)$ where $\gamma$ is a geodesic in $M$ are called \emph{light rays}; they are lightlike, or null, geodesics for the \emph{Lorentzian metric} $-dt^2 + g(x)$. When $(M,g)$ is simple the invertibility of the light ray transform follows from invertibility of the geodesic X-ray transform, see Exercise \ref{ex_wave}.

More generally, instead of the wave operator $\Box = \p_t^2 - \Delta$ corresponding to the product Lorentzian metric $-dt^2 + g(x)$ in $M \times \mR$, one could consider a more general Lorentzian metric $\bar{g}$ (i.e.\ a symmetric $2$-tensor field on $M \times \mR$ that has one negative and $n$ positive eigenvalues at each point) and the corresponding wave operator $\Box_{\bar{g}}$. Inverse problems for $\Box_{\bar{g}}$ constitute a wave equation analogue of the anisotropic Calder\'on problem (see Section \ref{sec_calderon}).

The following questions remain open:

\begin{question}
Can one recover a time-dependent potential $q \in C^{\infty}_c(M \times \mR)$ from the hyperbolic DN map on $\p M \times \mR$ for a general compact Riemannian manifold $(M,g)$ with boundary?
\end{question}

\begin{question}
For which Lorentzian metrics $\bar{g}$ is the light ray transform invertible?
\end{question}

\begin{question}
For which Lorentzian metrics $\bar{g}$ does one have uniqueness in the Gelfand problem?
\end{question}

See \cite{AFO, FIKO, FIO, Stefanov} for recent results on the above questions. We also mention that for \emph{nonlinear} wave equations better results are available, see e.g.\ \cite{Lassas}.

We now start the proof of Theorem \ref{thm_gelfand_xray}. Alternative presentations may be found in the lecture notes \cite{Oksanen_lectures, Salo_lectures} in the case of simple manifolds or Euclidean space. Similar results in much more general settings appear in \cite{StefanovYang, OSSU}. The proof proceeds in three steps.
\begin{enumerate}
\item[1.] 
Derivation of an integral identity showing that if $\Lambda_{q_1}=\Lambda_{q_2}$, then $q_1-q_2$ is $L^2$-orthogonal to certain products of solutions.
\item[2.] 
Construction of special solutions that concentrate near a light ray $(\gamma(t), t + \sigma)$ for some $\sigma > 0$.
\item[3.] 
Proof of \eqref{wave_xray} by inserting the special solutions in the integral identity and taking a limit.
\end{enumerate}

\subsection{Integral identity}

\begin{lemma}[Integral identity] \label{lemma_gelfand_integral_identity}
Assume that $q_1, q_2 \in C^{\infty}(M)$. For any $f_1, f_2 \in C^{\infty}_c(\p M \times (0,T))$, one has  
\[
((\Lambda_{q_1} - \Lambda_{q_2}) f_1, f_2)_{L^2(\p M \times (0,T))} = \int_{M} \int_0^T (q_1-q_2) u_1 \bar{u}_2 \,dt \,dV
\]
where $u_1$ solves \eqref{wavedp} with $q=q_1$ and $f=f_1$, and $u_2$ solves an analogous problem with vanishing Cauchy data on $\{ t=T \}$:
\begin{equation} \label{wavedptwo}
\left\{ \begin{array}{rll}
(\Box + q_2) u_2 &\!\!\!= 0 & \quad \text{in } M \times (0,T), \\
u_2 &\!\!\!= f_2 & \quad \text{on } \partial M \times (0,T), \\
u_2 = \p_t u_2 &\!\!\!= 0 & \quad \text{on } \{ t = T \}.
\end{array} \right.
\end{equation}
\end{lemma}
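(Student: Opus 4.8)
The plan is to prove the integral identity by a standard duality argument based on Green's formula for the operator $\Box + q$, adapted to the initial-boundary value problem. The key tool is an integration by parts identity on the spacetime cylinder $M \times (0,T)$: for sufficiently regular functions $v, w$ on $M \times (0,T)$ one has
\[
\int_M \int_0^T ((\Box + q)v)\,\bar{w} \,dt\,dV - \int_M \int_0^T v\,\overline{(\Box + \bar q)w} \,dt\,dV = (\text{boundary terms}),
\]
where the boundary terms come from two sources: the lateral boundary $\p M \times (0,T)$, which produces terms involving $v$, $\p_\nu v$, $w$, $\p_\nu w$ via Green's formula for $-\Delta$; and the time slices $\{t=0\}$ and $\{t=T\}$, which produce terms involving $v, \p_t v, w, \p_t w$ via integration by parts in $t$ for $\p_t^2$. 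Since $q$ is real-valued, $\Box + \bar q = \Box + q$, which simplifies matters.

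First I would fix $f_1, f_2 \in C^\infty_c(\p M \times (0,T))$ and let $u_1$ solve \eqref{wavedp} with $q = q_1$, $f = f_1$, and let $u_2$ solve \eqref{wavedptwo}. I would also introduce an auxiliary solution $\tilde u_1$ solving \eqref{wavedp} with $q = q_2$ and $f = f_1$, so that by definition $(\Lambda_{q_1} - \Lambda_{q_2})f_1 = \p_\nu(u_1 - \tilde u_1)|_{\p M \times (0,T)}$. Then $w := u_1 - \tilde u_1$ satisfies $(\Box + q_2)w = (q_2 - q_1)u_1$ in $M \times (0,T)$, $w = 0$ on $\p M \times (0,T)$, and $w = \p_t w = 0$ on $\{t = 0\}$. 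Next I would apply the spacetime Green identity above to the pair $v = w$ and $w = u_2$ (the solution of the backward-in-time problem). On the lateral boundary, the term involving $\p_\nu u_2$ drops because $w|_{\p M \times (0,T)} = 0$, leaving $-\int (\p_\nu w)\bar u_2$; on the time slice $\{t=0\}$ all terms vanish because $w$ has vanishing Cauchy data there; on the time slice $\{t=T\}$ all terms vanish because $u_2$ has vanishing Cauchy data there. Since $(\Box + q_2)u_2 = 0$, the identity reduces to
\[
\int_M \int_0^T (q_2 - q_1)u_1 \bar u_2 \,dt\,dV = -\int_{\p M}\int_0^T (\p_\nu w)\,\bar u_2 \,dt\,dS,
\]
and recognizing $\p_\nu w = (\Lambda_{q_1} - \Lambda_{q_2})f_1$ and $u_2|_{\p M \times (0,T)} = f_2$ gives, after a sign flip, precisely the claimed formula.

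I do not expect any genuine obstacle here; the only points requiring a little care are bookkeeping. First, one must verify that the boundary terms at $t = 0$ and $t = T$ genuinely vanish: this uses that $w, \p_t w$ vanish at $t=0$ (inherited from the initial conditions of \eqref{wavedp}, which $u_1$ and $\tilde u_1$ both satisfy) and that $u_2, \p_t u_2$ vanish at $t = T$ (the defining conditions of \eqref{wavedptwo}). Second, one should note that all functions involved are smooth up to the closure by the well-posedness statement cited after \eqref{wavedp} (the same regularity theory applies to the backward problem \eqref{wavedptwo} after the change of variables $t \mapsto T - t$), so every integration by parts is justified with no approximation argument needed. Finally, the computation is purely at the level of the integral identity and does not require $(M,g)$ to be simple or nontrapping, consistent with the hypotheses of the lemma. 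I would present the spacetime Green identity as the one displayed computation and then simply specialize it.
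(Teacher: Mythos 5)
Your proof is correct, and every step you outline does go through, but it is organized differently from the paper's. The paper first derives a transpose identity $(\Lambda_q f, h)_{L^2(\p M \times (0,T))} = (f, \Lambda_q^T h)_{L^2(\p M \times (0,T))}$, where $\Lambda_q^T h = \p_\nu v|_{\p M \times (0,T)}$ for $v$ solving the backward problem, via two successive integrations by parts (Green's formula in $x$, then integration by parts in $t$, with the time boundary terms dropping because $u$ has zero Cauchy data at $t=0$ and $v$ at $t=T$). It then writes $(\Lambda_{q_1} f_1, f_2)$ as the bulk quadratic form $\int\int(\langle\nabla u_1, \nabla\bar u_2\rangle - \p_t u_1\p_t\bar u_2 + q_1 u_1\bar u_2)$ using the first half of that computation, writes $(\Lambda_{q_2} f_1, f_2) = (f_1, \Lambda_{q_2}^T f_2)$ as the same quadratic form with $q_2$ in place of $q_1$ using the second half, and subtracts. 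This uses only the two solutions $u_1$, $u_2$ of the lemma statement; the asymmetry (forward problem for $q_1$, backward for $q_2$) is handled by the transpose identity. Your argument instead introduces a third solution $\tilde u_1$ (forward problem for $q_2$ with data $f_1$), reduces to the difference $w = u_1 - \tilde u_1$ which satisfies $(\Box + q_2)w = (q_2-q_1)u_1$ with homogeneous lateral and initial data, and then applies a single spacetime Green identity to the pair $(w, u_2)$. Both are standard duality arguments and neither is shorter than the other in any essential way; the paper's version avoids the auxiliary solution at the cost of stating the transpose property explicitly, while yours hides the transpose property inside the choice of $w$ but pays with an extra solution. Your observation that all time-slice boundary terms vanish because $w$ carries zero Cauchy data at $t=0$ and $u_2$ at $t=T$ is the correct bookkeeping and matches the mechanism in the paper's computation.
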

\begin{proof}
We first compute the formal adjoint of the DN map: one has 
\[
( \Lambda_q f, h)_{L^2(\p M \times (0,T))} = ( f, \Lambda_q^T h)_{L^2(\p M \times (0,T))}
\]
where $\Lambda_q^T h = \p_{\nu} v|_{\p M \times (0,T)}$ with $v$ solving $(\Box + q)v = 0$ so that $v|_{\p M \times (0,T)} = h$ and $v = \p_t v = 0$ on $\{ t = T \}$. To prove this, we let $u$ be the solution of \eqref{wavedp} and integrate by parts:
\begin{align*}
( \Lambda_q f, h)_{L^2(\p M \times (0,T))} &= \int_{\p M} \int_0^T (\p_{\nu} u) \bar{v} \,dt \,dS \\
 &= \int_{M} \int_0^T (\langle \nabla u, \nabla \bar{v} \rangle + (\Delta u) \bar{v}) \,dt \,dV \\
 &= \int_{M} \int_0^T (\langle \nabla u, \nabla \bar{v} \rangle + (\p_t^2 u + qu) \bar{v}) \,dt \,dV \\
 &= \int_{M} \int_0^T (\langle \nabla u, \nabla \bar{v} \rangle - \p_t u \p_t \bar{v} + qu \bar{v}) \,dt \,dV \\
 &= \int_{M} \int_0^T (\langle \nabla u, \nabla \bar{v} \rangle + u (\ol{\p_t^2 v + qv}) ) \,dt \,dV \\
 &= \int_{M} \int_0^T (\langle \nabla u, \nabla \bar{v} \rangle + u \Delta \ol{v}) \,dt \,dV \\
 &= \int_{\p M} \int_0^T u \p_{\nu} \bar{v} \,dt \,dS \\
 &= ( f, \Lambda_q^T h)_{L^2(\p M \times (0,T))}.
\end{align*}

Now, if $u_1$ and $u_2$ are as stated, the first half of the computation above gives 
\begin{align*}
( \Lambda_{q_1} f_1, f_2)_{L^2(\p M \times (0,T))} 
 &= \int_{M} \int_0^T (\langle \nabla u_1, \nabla \bar{u}_2 \rangle- \p_t u_1 \p_t \bar{u}_2 + q_1 u_1 \bar{u}_2) \,dt \,dV \\
 \intertext{and the second half of the computation above gives} 
( \Lambda_{q_2} f_1, f_2)_{L^2(\p M \times (0,T))} 
 &= ( f_1, \Lambda_{q_2}^T f_2)_{L^2(\p M \times (0,T))} \\
 &= \int_{\Omega} \int_0^T (\langle \nabla u_1, \nabla \bar{u}_2 \rangle- \p_t u_1 \p_t \bar{u}_2 + q_2 u_1 \bar{u}_2) \,dt \,dV. 
\end{align*}
The result follows by subtracting these two identities.
\end{proof}

If $\Lambda_{q_1} = \Lambda_{q_2}$, it follows from Lemma \ref{lemma_gelfand_integral_identity} that 
\[
\int_{M} \int_0^T (q_1-q_2) u_1 \bar{u}_2 \,dt \,dV = 0
\]
for all solutions $u_1$ and $u_2$ of the given type.

\subsection{Recovering the X-ray transform}

We will now start the construction of special solutions concentrating near a light ray $(\gamma(t), t + \sigma)$ where $\sigma > 0$ is a small time delay parameter. We use the method of \emph{geometrical optics}, also known as the \emph{WKB method}, and first look for approximate solutions using the ansatz 
\[
v(x,t) = e^{i\lambda \varphi(x,t)} a(x,t)
\]
where $\lambda > 0$ is a large parameter, $\varphi$ is a real phase function, and $a$ is an amplitude supported near the curve $t \mapsto (\gamma(t), t+\sigma)$. (We could also write $h = 1/\lambda$ and state our results in terms of a small parameter $h$, but this is not necessary since our arguments are elementary and we do need any semiclassical $\Psi$DO calculus. Using a large parameter may be less confusing since then we do not need to multiply operators by powers of $h$.)

A direct computation, given below in \eqref{wave_equation_geometric_optics}, shows that 
\begin{equation} \label{wave_eq_conjugation_first}
(\Box+q)v = e^{i\lambda \varphi} \big[ \lambda^2 \left[ \abs{\nabla_x \varphi}_g^2 - (\p_t \varphi)^2 \right] a + i \lambda La + (\Box + q)a \big]
\end{equation}
where $L$ is a certain first order differential operator. Now $v$ is a good approximate solution if the right hand side is very small when $\lambda$ is large. In particular, we want the $\lambda^2$ term to vanish, which means that the phase function $\varphi$ should solve the \emph{eikonal equation} 
\begin{equation} \label{eikonal_eq}
\abs{\nabla_x \varphi}_g^2 - (\p_t \varphi)^2 = 0.
\end{equation}
We will show that when $(M,g)$ is simple, the function $\varphi(x,t) := t - r$ is a solution where $(\omega, r)$ are Riemannian polar coordinates as in formula \eqref{riemannian_polar_first}. Here and below it is convenient to write $(\omega,r)$ instead of $(r,\omega)$. We also show that by solving transport equations involving $L$ one can obtain an amplitude $a$ supported near the curve $t \mapsto (\gamma(t), t+\sigma)$ satisfying 
\[
\norm{i \lambda La + (\Box + q)a}_{L^{\infty}} \to 0 \text{ as $\lambda \to \infty$}.
\]
Thus $v$ is an approximate solution in the sense that $(\Box+q)v = o(1)$ as $\lambda \to \infty$.
These approximate solutions can then be converted into exact solutions by solving a Dirichlet problem for the wave equation.

After the outline above, we give the precise statement regarding concentrating solutions.

\begin{proposition}[Concentrating solutions] \label{prop_gelfand_concentrating_solutions}
Assume that $q \in C^{\infty}_c(M^{\mathrm{int}})$, and let $\gamma: [0,\ell] \to M$ be a maximal geodesic in $M$ with $\ell < T$. Let also $\sigma > 0$ be a small enough time delay parameter.
For any $\lambda \geq 1$ there is a solution $u = u_{\lambda}$ of $(\Box + q) u = 0$ in $M \times (0,T)$ with $u = \p_t u = 0$ on $\{ t=0 \}$, such that for any $\psi \in C^{\infty}_c(M \times [0,T])$ one has  
\begin{equation} \label{concentrating_solutions_first_limit}
\lim_{\lambda \to \infty} \int_{M} \int_0^T \psi \abs{u}^2 \,dt \,dV = \int_{0}^{\ell} \psi(\gamma(t), t + \sigma) \,dt.
\end{equation}
Moreover, if $\tilde{q} \in C^{\infty}_c(M^{\mathrm{int}})$, there is a solution $\tilde{u} = \tilde{u}_{\lambda}$ of $(\Box + \tilde{q})\tilde{u} = 0$ in $M \times (0,T)$ with $\tilde{u} = \p_t \tilde{u} = 0$ on $\{ t=T \}$, such that for any $\psi \in C^{\infty}_c(M \times [0,T])$ one has  
\begin{equation} \label{concentrating_solutions_second_limit}
\lim_{\lambda \to \infty} \int_{M} \int_0^T \psi u \ol{\tilde{u}} \,dt \,dV  = \int_{0}^{\ell} \psi(\gamma(t), t + \sigma) \,dt.
\end{equation}
\end{proposition}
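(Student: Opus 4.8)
The plan is to implement the WKB/geometrical optics construction outlined in the text, so I will spell out the three ingredients: (i) the eikonal equation, (ii) the transport equation for the amplitude, and (iii) the conversion of approximate solutions into exact ones, and then verify the two limits.

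\textbf{Step 1: Solving the eikonal equation.} Fix $\gamma = \gamma_{p,\omega_0}$ a maximal geodesic in $M$ with $\ell < T$. Since $(M,g)$ is simple, by Lemma \ref{lemma_simple_exp} there is an open manifold $(U,g)\supset M$ on which $\exp_p: D_p \to U$ is a diffeomorphism, so we have global polar normal coordinates $(\omega,r)$ centered at $p$, and $r(x) = d_g(p,x)$ is smooth on $U \setminus \{p\}$. In these coordinates the metric takes the form $dr^2 + h_r(\omega)$, so $\abs{\nabla_x r}_g = 1$. Setting $\varphi(x,t) := t - r(x)$ one checks $\abs{\nabla_x \varphi}_g^2 - (\p_t \varphi)^2 = \abs{\nabla_x r}_g^2 - 1 = 0$, so $\varphi$ solves \eqref{eikonal_eq}. (To handle the general non-simple case one replaces this real phase by a Gaussian beam / complex phase concentrated along $\gamma$, but for the simple case the real phase suffices; I would organize the proof to present the simple case first and then remark on the Gaussian beam modification.)

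\textbf{Step 2: Transport equations and the amplitude.} Plugging $v = e^{i\lambda\varphi}a$ into $\Box + q$ and using the eikonal equation kills the $\lambda^2$ term, leaving \eqref{wave_eq_conjugation_first} with a first-order operator $L$ whose coefficients are built from $\varphi$; concretely $L$ involves differentiation along the characteristic vector field $\p_t + \nabla_x r \cdot \nabla_x$, i.e. along the curve $t \mapsto (\gamma_{p,\omega}(t), t)$, together with a zeroth-order term coming from $\Box \varphi$ (which is $-\Delta_g r$, related to the volume-form Jacobian of polar coordinates). Choosing initial data concentrated near $\omega = \omega_0$ and near $r$-value matching the time delay $\sigma$, I solve the first transport equation $La_0 = 0$ along characteristics to get a principal amplitude $a_0$ supported in a thin tube around $t \mapsto (\gamma(t), t+\sigma)$; an amplitude factor can be arranged so that the $L^2$ mass of $\abs{v}^2$ localizes to the curve with the correct normalization. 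One may either stop at $a = a_0$, in which case $\norm{i\lambda L a_0 + (\Box+q)a_0}_{L^\infty} = \norm{(\Box+q)a_0}_{L^\infty} = O(1)$ is not quite $o(1)$ — so I would solve one further transport equation for a correction term $a_1/\lambda$ (i.e. $a = a_0 + a_1/\lambda$ with $L a_1 = i(\Box+q)a_0$) to gain the extra power of $\lambda$ and achieve $(\Box+q)v = o(1)$ in $L^2$, in fact $O(1/\lambda)$. Here one uses $q \in C^\infty_c(M^{\mathrm{int}})$ and that the tube around $\gamma$ stays, for small $\sigma$, inside $M^{\mathrm{int}} \times (0,T)$ away from $t = 0$; this is also why $\sigma$ must be small (so that the delayed curve still fits in the time cylinder and the amplitude can be taken to vanish near $\{t=0\}$).

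\textbf{Step 3: From approximate to exact solutions, and the limits.} Write $v = v_\lambda$ for the approximate solution; it satisfies $v = \p_t v = 0$ near $\{t=0\}$ by construction, and $(\Box + q)v =: F_\lambda$ with $\norm{F_\lambda}_{L^2(M\times(0,T))} = O(1/\lambda)$. Let $w_\lambda$ solve $(\Box+q)w_\lambda = -F_\lambda$ in $M\times(0,T)$ with $w_\lambda = \p_t w_\lambda = 0$ on $\{t=0\}$ and $w_\lambda|_{\p M \times (0,T)} = 0$; by the standard energy estimate for the wave equation (well-posedness as cited for \eqref{wavedp}) one gets $\norm{w_\lambda}_{L^2(M\times(0,T))} \lesssim T\norm{F_\lambda}_{L^2} = O(1/\lambda)$. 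Then $u_\lambda := v_\lambda + w_\lambda$ solves $(\Box+q)u_\lambda = 0$ with vanishing Cauchy data at $t=0$, as required. For the limit \eqref{concentrating_solutions_first_limit}: expand $\abs{u_\lambda}^2 = \abs{v_\lambda}^2 + 2\re(v_\lambda \bar w_\lambda) + \abs{w_\lambda}^2$; the last two terms integrate against the bounded $\psi$ to $O(1/\lambda)$ by Cauchy–Schwarz since $\norm{v_\lambda}_{L^2}$ is bounded, and $\int\int \psi\abs{v_\lambda}^2\,dt\,dV = \int\int \psi \abs{a}^2\,dt\,dV$ (the phase cancels in $\abs{e^{i\lambda\varphi}a}^2$), which by the concentration of $a$ near the curve and the chosen normalization converges to $\int_0^\ell \psi(\gamma(t),t+\sigma)\,dt$ as the tube shrinks — here one must couple the tube width to $\lambda$, or better, fix the amplitude independent of $\lambda$ but supported in a thin tube and then the $L^2$ mass is already exactly (up to a controlled error) the line integral; I would set this up so the convergence is clean. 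For \eqref{concentrating_solutions_second_limit} one repeats Steps 1–3 with $\tilde q$ and with the same phase $\varphi$ but imposing vanishing Cauchy data at $\{t=T\}$ instead (solving transport equations "backward"), producing $\tilde u_\lambda = \tilde v_\lambda + \tilde w_\lambda$; since $\tilde v_\lambda = e^{i\lambda\varphi}\tilde a$ has the \emph{same} phase, the product $u_\lambda \overline{\tilde u_\lambda}$ has leading term $e^{i\lambda\varphi}\overline{e^{i\lambda\varphi}}\,a\bar{\tilde a} = a\bar{\tilde a}$ with no residual oscillation, and the same error analysis gives the stated limit.

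\textbf{Main obstacle.} The delicate point is Step 2: constructing the amplitude so that its $L^2$ mass concentrates, with the correct constant, on the curve $t\mapsto(\gamma(t),t+\sigma)$, while simultaneously (a) keeping the support inside $M^{\mathrm{int}}\times(0,T)$ and away from $t=0$ (needed so the exact solution inherits zero Cauchy data, and so $q\in C^\infty_c$ interacts nicely), and (b) getting the remainder down to $o(1)$ in $L^2$ after the Dirichlet correction. Keeping track of the Jacobian of the polar normal coordinates — which appears both in the transport operator (through $\Box\varphi = -\Delta_g r$) and in the volume form $dV_g$ when computing $\int \abs{a}^2\,dV_g$ — is exactly what makes the normalization come out to the bare line integral with constant $1$; this bookkeeping is the technical heart of the argument. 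The conversion to exact solutions and the two limits are then routine energy estimates and Cauchy–Schwarz.
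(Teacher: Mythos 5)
Your overall plan follows the paper's route closely: polar normal coordinates from Lemma~\ref{lemma_simple_exp}, the real phase $\varphi = t - r$, a two-term amplitude $a = a_0 + \lambda^{-1} a_{-1}$ solving transport equations with the integrating factor absorbing $\Box\varphi$, the energy estimate to pass from approximate to exact solutions, and the phase cancellation in $|u|^2$ and $u\bar{\tilde{u}}$. The structure is right. But there is a genuine gap exactly at the point you label the main obstacle, and your proposed fix does not work.

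You offer two ways to make \eqref{concentrating_solutions_first_limit} come out: couple the tube width to $\lambda$, \emph{or better}, fix the amplitude independent of $\lambda$. The second alternative cannot produce the stated limit. If $a_0$ is independent of $\lambda$ and supported in a fixed tube around $t \mapsto (\gamma(t),t+\sigma)$, then $|v_\lambda|^2 = |a_0|^2$ is $\lambda$-independent, so $\lim_{\lambda\to\infty}\int\int\psi|u_\lambda|^2\,dt\,dV = \int\int\psi|a_0|^2\,dt\,dV$ is an integral over a tube of positive width, not the line integral $\int_0^\ell\psi(\gamma(t),t+\sigma)\,dt$; there is no second limit left to take. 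The paper uses your \emph{first} alternative and this choice is forced, with a specific rate: the cross-sectional profile is $\chi(y) = \eps^{-n/2}\zeta(y/\eps)$ with $\norm{\zeta}_{L^2}=1$ and $\eps = \eps(\lambda) = \lambda^{-1/(n+8)}$. The $L^2$-normalized shrinking profile turns $\int\int\psi|a_0|^2$ into the line integral in the limit (once one checks, as you anticipate, that the integrating factor $c = \det(g)^{1/4}$ exactly cancels $\det(g)^{1/2}\,d\omega\,dr$ after the change of variables), while the exponent $1/(n+8)$ ensures that $\norm{(\Box+q)a_{-1}}_{L^\infty} \lesssim \norm{\chi}_{W^{4,\infty}} \lesssim \eps^{-n/2-4} \lesssim \lambda^{1/2}$, so $\norm{F_\lambda} = O(\lambda^{-1/2}) \to 0$. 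This coupling between the shrinking rate and the derivative growth of $\chi$ is the step your Step 3 implicitly relies on but does not supply: once $\chi$ depends on $\lambda$, the claim "$(\Box+q)v = O(1/\lambda)$" is false as stated, and the remainder bound must be re-derived with the $\eps^{-n/2-4}$ factor tracked explicitly.
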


\begin{remark} \label{rem_wave_sing}
The fact that one can construct solutions to the wave equation that concentrate near light rays $t \mapsto (\gamma(t), t + \sigma)$ is a consequence of \emph{propagation of singularities}. This general phenomenon states that singularities of solutions for operators with real valued principal symbol $p$ propagate along \emph{null bicharacteristic curves}, i.e.\ integral curves of the Hamilton vector field $H_p$ in phase space. The principal symbol of the wave operator $\Box$ is $p(x,t,\xi,\tau) = -\tau^2 + \abs{\xi}_g^2$, and the light rays are projections to the $(x,t)$ variables of null bicharacteristic curves for $\Box$. This point of view will be emphasized in the context of Gaussian beams in Section \ref{subseq_gaussian_beam} below.

We also mention that \eqref{concentrating_solutions_first_limit} indicates that the \emph{semiclassical measure}, or \emph{quantum limit}, of the family $(u_{\lambda})$ as $\lambda \to \infty$ is the delta function of the light ray. One could also give more precise phase space versions of the statements in Proposition \ref{prop_gelfand_concentrating_solutions} (see \cite{OSSU}).
\end{remark}

\begin{proof}[Proof of Theorem \ref{thm_gelfand_xray}]
Using the assumption $\Lambda_{q_1} = \Lambda_{q_2}$ and Lemma \ref{lemma_gelfand_integral_identity}, we have 
\begin{equation} \label{gelfand_orthogonality_relation}
\int_{M} \int_0^T (q_1-q_2) u_1 \ol{u}_2 \,dt \,dV = 0
\end{equation}
for any solutions $u_j$ of $(\Box + q_j) u_j = 0$ in $M \times (0,T)$ so that $u_1 = \p_t u_1 = 0$ on $\{ t = 0 \}$, and $u_2 = \p_t u_2 = 0$ on $\{ t = T \}$.

Let $\gamma: [0,\ell] \to M$ be a maximal unit speed geodesic segment in $M$ with $\ell < T$, let $\sigma > 0$ be small, and let $u_1 = u_{1,\lambda}$ be the solution constructed in Proposition \ref{prop_gelfand_concentrating_solutions} for the potential $q_1$ with $u_1 = \p_t u_1 = 0$ on $\{ t = 0 \}$. Moreover, let $u_2 = u_{2,\lambda}$ be the solution constructed in the end of Proposition \ref{prop_gelfand_concentrating_solutions} for the potential $q_2$ with $u_2 = \p_t u_2 = 0$ on $\{ t = T \}$. Taking the limit as $\lambda \to \infty$ in \eqref{gelfand_orthogonality_relation} and using \eqref{concentrating_solutions_second_limit} with $\psi(x,t) = (q_1-q_2)(x)$, we obtain that 
\[
\int_{0}^{\ell} (q_1-q_2)(\gamma(t)) \,dt = 0.
\]
This is true for any maximal geodesic $\gamma$ in $M$ with length $\ell < T$, which proves the result. %If we assume that $T$ is larger than the length of the longest maximal geodesic in $M$, it follows that the geodesic X-ray transform of $q_1-q_2$ vanishes. By Theorem \ref{thm_xray} we obtain that $q_1=q_2$.
\end{proof}

It remains to prove Proposition \ref{prop_gelfand_concentrating_solutions}. We first give a proof under the additional assumption that $(M,g)$ is \emph{simple}. In this case it is possible to solve the eikonal equation globally and a standard geometric optics construction is sufficient. For general manifolds $(M,g)$ there may be conjugate points and it may not be possible to find smooth global solutions of the eikonal equation. We will use a Gaussian beam construction to deal with this case.

\subsection{Special solutions in the simple case -- geometrical optics} \label{sec_wave_simple}

Recall that we are looking for approximate solutions of the form $v = e^{i\lambda \varphi} a$. For the construction of the phase function $\varphi$ we will use Lemma \ref{lemma_simple_exp}, which essentially states that a manifold is simple iff it admits global polar coordinates $(\omega, r)$ centered at any point. We will need also need the following property.

\begin{lemma}[Riemannian polar coordinates] \label{lemma_polar}
In the $(\omega,r)$ coordinates the metric has the form 
\[
g(\omega,r) = \left( \begin{array}{cc}  g_0(\omega,r) & 0 \\ 0 & 1 \end{array} \right).
\]
\end{lemma}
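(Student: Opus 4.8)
The statement is precisely the Gauss lemma together with the fact that radial geodesics have unit speed, so the plan is to verify these two facts in the coordinate frame $(\partial_r, \partial_{\omega_1}, \dots, \partial_{\omega_{n-1}})$ associated to the chart $x = \exp_p(r\omega)$. Recall that simplicity of $(M,g)$ lets us invoke Lemma \ref{lemma_simple_exp}: for each $p$ the map $\exp_p$ is a diffeomorphism from its maximal domain $D_p \subset T_p U$ onto $U$, so away from $p$ every point of $U$ has a unique representation $x = \exp_p(r\omega)$ with $r > 0$, $\omega \in S^{n-1}$, and $(r,\omega)$ are genuine coordinates there.

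First I would check the $(n,n)$ entry. By definition the curve $r \mapsto \exp_p(r\omega)$ (with $\omega$ fixed) is the geodesic $\gamma_{p,\omega}$, which has unit speed since all geodesics in these notes are unit speed; hence $\partial_r = \dot\gamma_{p,\omega}$ and $g_{rr} = \abs{\partial_r}_g^2 = 1$. Moreover it is a geodesic, so $D_{\partial_r} \partial_r = 0$ along the radial direction. Next, to get the vanishing of the off-diagonal block $g_{r i} := \langle \partial_r, \partial_{\omega_i}\rangle_g$, I would differentiate along $r$: since $\partial_r$ and $\partial_{\omega_i}$ are coordinate vector fields they have vanishing Lie bracket, so by the torsion-free property $D_{\partial_r}\partial_{\omega_i} = D_{\partial_{\omega_i}} \partial_r$, and therefore
\[
\p_r \langle \partial_r, \partial_{\omega_i}\rangle_g = \langle D_{\partial_r}\partial_r, \partial_{\omega_i}\rangle_g + \langle \partial_r, D_{\partial_{\omega_i}}\partial_r \rangle_g = 0 + \tfrac12 \p_{\omega_i} \langle \partial_r, \partial_r\rangle_g = \tfrac12 \p_{\omega_i}(1) = 0 .
\]
Thus $g_{r i}$ is independent of $r$ along each radial geodesic, and it remains to identify its value.

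To pin down that constant value I would examine the limit $r \to 0$. As $r \to 0$ the point $\exp_p(r\omega)$ converges to $p$ and $\partial_r$ converges to $\omega \in T_p U$ (a unit vector), while $\partial_{\omega_i} = \frac{d}{d\omega_i}\exp_p(r\omega) \to 0$ because $\exp_p(0\cdot\omega) = p$ is independent of $\omega$; more precisely $d(\exp_p)_0 = \mathrm{Id}$ gives $\partial_{\omega_i} = r\,\p_{\omega_i}\omega + O(r^2)$ in a frame near $p$, so $\abs{\partial_{\omega_i}}_g \to 0$. Hence $g_{ri} = \langle \partial_r, \partial_{\omega_i}\rangle_g \to 0$, and by the previous step $g_{ri} \equiv 0$ on $U \setminus \{p\}$. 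This shows $g$ has the claimed block form, with $g_0(\omega,r)$ the positive-definite $(n-1)\times(n-1)$ matrix $\big(\langle \partial_{\omega_i}, \partial_{\omega_j}\rangle_g\big)$ obtained by restricting $g$ to the geodesic spheres.

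The only genuinely delicate point is the behaviour near the center $r = 0$, where the polar chart degenerates; everything else is the standard Gauss lemma computation. I would handle this either by the asymptotic expansion of $\exp_p$ sketched above, or, to stay coordinate-free, by noting that $J_i(r) := \partial_{\omega_i}|_{\gamma_{p,\omega}(r)}$ is the Jacobi field along $\gamma_{p,\omega}$ with $J_i(0) = 0$ and using $\langle J_i, \dot\gamma\rangle_g(r) = \langle J_i, \dot\gamma\rangle_g(0) + r\,\langle D_r J_i, \dot\gamma\rangle_g(0) = 0$ since $J_i(0)=0$ and $\langle D_r J_i, \dot\gamma\rangle_g$ is constant (its $r$-derivative is $\langle D_r^2 J_i, \dot\gamma\rangle = -\langle R(J_i,\dot\gamma)\dot\gamma, \dot\gamma\rangle = 0$). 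Either route completes the proof.
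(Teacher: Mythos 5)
Your proof is correct and follows the same outline as the paper's: both reduce the claim to $\langle\partial_r,\partial_r\rangle_g = 1$ (radial geodesics have unit speed) and $\langle\partial_r,\partial_{\omega_i}\rangle_g = 0$, the latter being exactly the Gauss lemma, which the paper simply cites. The only difference is that you prove the Gauss lemma from scratch via $\partial_r\langle\partial_r,\partial_{\omega_i}\rangle_g = 0$ together with the vanishing limit as $r\to 0$, which is the standard argument and is correctly executed.
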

\begin{proof}
It is enough to prove that $\langle \p_r, \p_r \rangle = 1$ and $\langle \p_r, w \rangle = 0$, where $w = \dot{\eta}(0)$ for any curve $\eta(t) = (r, \omega(t))$. Since $\p_r$ is the tangent vector of a unit speed geodesic starting at $p$, one has $\langle \p_r, \p_r \rangle = 1$. If $\eta(t)$ is a curve as above, the fact that $\langle \p_r, w \rangle = 0$ is precisely the content of the Gauss lemma in Riemannian geometry (see e.g.\ \cite[Section 3.7]{PSU_book}).
\end{proof}

We can now prove the result on concentrating solutions. The proof is elementary although a bit long.

\begin{proof}[Proof of Proposition \ref{prop_gelfand_concentrating_solutions} when $(M,g)$ is simple]
Let $\gamma: [0,\ell] \to M$ be a maximal unit speed geodesic in $M$ with $\ell < T$, and let initially $\sigma \in (0,T-\ell)$.

We first construct an approximate solution $v = v_{\lambda}$ for the operator $\Box + q$, having the form 
\[
v(x,t) = e^{i\lambda \varphi(x,t)} a(x,t)
\]
where $\varphi$ is a real phase function, and $a$ is an amplitude supported near the curve $t \mapsto (\gamma(t), t + \sigma)$. Note that 
\begin{align*}
\p_t(e^{i\lambda \varphi} u) &= e^{i\lambda \varphi}(\p_t + i \lambda \p_t \varphi) u, \\
\p_t^2(e^{i\lambda \varphi} u) &= e^{i\lambda \varphi}(\p_t + i \lambda \p_t \varphi)^2 u
\end{align*}
and similarly for the $x$-derivatives 
\begin{align*}
\nabla(e^{i\lambda \varphi} u) &= e^{i\lambda \varphi}(\nabla + i \lambda \nabla \varphi) u, \\
\mathrm{div} \nabla(e^{i\lambda \varphi} u) &= e^{i\lambda \varphi}(\mathrm{div} + i \lambda \langle \nabla \varphi, \,\cdot\, \rangle)(\nabla + i \lambda \nabla \varphi) u.
\end{align*}
We thus compute 
\begin{align}
(\Box+q)(e^{i\lambda \varphi} a) &= e^{i\lambda \varphi}( (\p_t + i\lambda \p_t \varphi)^2 - (\mathrm{div}_x + i \lambda \langle \nabla_x \varphi, \,\cdot\, \rangle)(\nabla_x + i \lambda \nabla_x \varphi) + q) a \notag \\
 &= e^{i\lambda \varphi} \big[ \lambda^2 \left[ \abs{\nabla_x \varphi}_g^2 - (\p_t \varphi)^2 \right] a \notag \\
 & \qquad + i \lambda \left[2 \p_t \varphi \p_t a - 2 \langle \nabla_x \varphi, \nabla_x a \rangle + (\Box \varphi)a \right] + (\Box + q)a \big]. \label{wave_equation_geometric_optics}
\end{align}

We would like to have $(\Box+q)(e^{i\lambda \varphi} a) = O(\lambda^{-1})$, so that $v = e^{i\lambda \varphi} a$ would indeed be an approximate solution when $\lambda$ is large. To this end, we first choose $\varphi$ so that the $\lambda^2$ term in \eqref{wave_equation_geometric_optics} vanishes. This will be true if $\varphi$ solves the \emph{eikonal equation} 
\begin{equation} \label{eikonal1}
\abs{\nabla_x \varphi}_g^2 - (\p_t \varphi)^2 = 0.
\end{equation}
We make the simple choice 
\begin{equation} \label{phase_choice}
\varphi(x,t) := t - \psi(x)
\end{equation}
where $\psi \in C^{\infty}(M)$ should solve the equation 
\begin{equation} \label{eikonal2}
\abs{\nabla \psi}_g^2 = 1.
\end{equation}
This is another eikonal equation, now only in the $x$ variables. We now invoke the assumption that $(M,g)$ is \emph{simple} and give an explicit solution of \eqref{eikonal2}. %Functions satisfying \eqref{eikonal2} are called \emph{distance functions}, and in fact they are locally of the form $\psi(x) = \mathrm{dist}_g(x,S)$ for some submanifold $S$. The equation \eqref{eikonal2} does not in general have a smooth solution globally in $M$. However, if we invoke the assumption that $(M,g)$ is \emph{simple} then we can exhibit a smooth global solution as follows.

Let $(U,g)$ be an open manifold as in Lemma \ref{lemma_simple_exp} that contains $M$ as a compact subdomain. Let $\eta$ be the maximal geodesic in $U$ with $\eta|_{[0,\ell]} = \gamma$ and, possibly after decreasing $\sigma > 0$, $p := \eta(-\sigma) \in U \setminus M$. By Lemma \ref{lemma_simple_exp}, if $D_p$ is the maximal domain of $\exp_p$ in $T_p U$, then 
\[
\exp_p: D_p \to U
\]
is a diffeomorphism. Thus any point $x \in U$ can be written uniquely as 
\[
x = \exp_p(r\omega)
\]
for some $r \geq 0$ and $\omega \in S^{n-1}$ with $r \omega \in D_p$. Identifying $x$ with $(\omega,r)$ gives global coordinates in $U \setminus \{ p \}$. We claim that 
\[
\psi(\omega,r) := r
\]
is a smooth solution of \eqref{eikonal2} near $M$. Note first that $\psi$ is smooth in $M$, since the origin of polar coordinates is outside $M$. Now the fact that $\psi$ solves \eqref{eikonal2} follows immediately from Lemma \ref{lemma_polar} since 
\[
\langle \nabla \psi, \nabla \psi \rangle = \langle \p_r, \p_r \rangle = 1.
\]

With the choice $\varphi(x,t) = t - \psi(x)$, we have \eqref{eikonal1} and thus the equation \eqref{wave_equation_geometric_optics} becomes 
\begin{equation} \label{wave_equation_geometric_optics_two}
(\Box+q)(e^{i\lambda \varphi} a) = e^{i\lambda \varphi} \left[ i\lambda (La) + (\Box+q)a \right]
\end{equation}
where $L$ is the first order operator defined by 
\[
La := 2 \p_t \varphi \p_t a - 2 \langle \nabla_x \varphi, \nabla_x a \rangle + (\Box \varphi)a.
\]
Now $\p_t \varphi = 1$, and since $\psi(\omega, r) = r$ we obtain from Lemma \ref{lemma_polar} that 
\[
\langle \nabla_x \varphi, \nabla_x a \rangle = g^{jk} \p_{x_j} \varphi \p_{x_k} a = -\p_r a.
\]
Writing $b := \Box \varphi$, the operator $L$ simplifies to 
\begin{equation} \label{l_second}
La = 2 (\p_t + \p_r)a + ba.
\end{equation}
For later purposes we observe that by Lemma \ref{lemma_polar}, $b$ has the precise form 
\begin{align}
b(\omega,r,t) &= (\p_t^2 - \Delta_x) \varphi = \Delta_x r = \det(g)^{-1/2}\p_r (\det(g)^{1/2}) \label{b_precise_form} \\
 &= \frac{1}{2} \p_r \left[ \log \det(g(\omega,r))  \right]. \notag
\end{align}

We next look for the amplitude $a$ in the form 
\[
a = a_0 + \lambda^{-1} a_{-1}.
\]
Inserting this to \eqref{wave_equation_geometric_optics} and equating like powers of $\lambda$, we get 
\begin{multline} \label{wave_equation_geometric_optics_three}
(\Box+q)(e^{i\lambda \varphi} a) \\
 = e^{i\lambda \varphi} \left[ i\lambda (La_0) + \left[ i L a_{-1} + (\Box+q)a_0 \right] + \lambda^{-1} (\Box+q)a_{-1} \right].
\end{multline}
We would like the last expression to be $O(\lambda^{-1})$. This will hold if $a_0$ and $a_{-1}$ satisfy the \emph{transport equations} 
\begin{align} \label{wave_transport_equations}
\left\{ \begin{array}{rl}
L a_0 &\!\!\!= 0, \\[3pt]
La_{-1} &\!\!\!= i(\Box+q)a_0.
\end{array} \right.
\end{align}
It is not hard to solve these transport equations. To do this, it is convenient to switch from the coordinates $(\omega,r,t)$ near $M \times (0,T)$ to new coordinates $(\omega, z, w)$, where 
\begin{equation} \label{new_z_w_coordinates}
z = \frac{t+r}{2}, \qquad w = \frac{t-r}{2}.
\end{equation}
Then $L$ in \eqref{l_second} simplifies to $2 \p_z + b$ in the sense that 
\[
LF(\omega,r,t) = (2 \p_z \breve{F} + \breve{b} \breve{F})(\omega,\frac{t+r}{2},\frac{t-r}{2})
\]
where $\breve{F}$ corresponds to $F$ in the new coordinates:
\[
\breve{F}(\omega,z,w) := F(\omega,z-w,z+w).
\]
Finally, we can use an integrating factor to get rid of $\breve{b}$. One has 
\begin{equation} \label{l_integrating_factor}
LF(\omega,r,t) = 2 c^{-1} \p_z (c \breve{F})(\omega,\frac{t+r}{2},\frac{t-r}{2})
\end{equation}
provided that $2 c^{-1} \p_z c = \breve{b}$. By \eqref{b_precise_form}, this will hold if we choose $c$ so that 
\begin{equation} \label{c_integrating_factor}
c(\omega, z, w) := \det(g(\omega,z-w))^{1/4}.
\end{equation}

We can now solve the transport equations \eqref{wave_transport_equations}. By \eqref{l_integrating_factor} the first transport equation reduces to 
\[
\p_z (c \breve{a}_0) = 0.
\]
Recall that we want our amplitude $a$ to be supported near the curve $t \mapsto (\eta(t), t + \sigma)$ in the $(x,t)$ coordinates. Recall also that the center $p$ of our polar coordinates was given by $p = \eta(-\sigma)$. Thus $\eta(t) = (\omega_0, t+\sigma)$ for some $\omega_0 \in S^{n-1}$ in the $(\omega, r)$ coordinates, and at time $\sigma+t$ the amplitude should be supported near $(\omega_0, \sigma+t)$. Because of these facts, it makes sense to choose 
\[
\breve{a}_0(\omega,z,w) := c(\omega,z,w)^{-1} \chi(\omega,w),
\]
where $\chi \in C^{\infty}_c(S^{n-1} \times \mR)$ is supported near $(\omega_0, 0)$. We will later choose $\chi$ to depend on $\lambda$. Note also that $\gamma(t)$ exits $M$ when $t=\ell$, which means that 
\[
\breve{a}_0|_{M \times [\sigma+\ell+\eps, \sigma+\ell+2\eps]} = 0
\]
for some $\eps > 0$ if $\sigma$ is chosen so small that $\sigma + \ell < T$. We set $\breve{a}_0 = 0$ for $t \in [\sigma+\ell+\eps, T]$.

Next we choose 
\[
\breve{a}_{-1}(\omega,z,w) := -\frac{1}{2i c} \int_0^z c ((\Box + q)a_0){\,\breve{\rule{0pt}{6pt}}\,}(\omega,s,w) \,ds.
\]
The functions $a_0$ and $a_{-1}$ satisfy \eqref{wave_transport_equations}, and they vanish unless $w$ is small (i.e.\ $r$ is close to $t$). Then \eqref{wave_equation_geometric_optics_three} becomes 
\[
(\Box+q)(e^{i\lambda \varphi} a) = F_{\lambda}
\]
where 
\[
F_{\lambda} := \lambda^{-1} e^{i\lambda \varphi} (\Box+q)a_{-1}.
\]
Using the Cauchy-Schwarz inequality, one can check that 
\begin{align}
\norm{F_{\lambda}}_{L^{\infty}(M \times (0,T))} &\leq \lambda^{-1} \norm{(\Box + q) a_{-1}}_{L^{\infty}(M \times (0,T))} \notag \\
 &\leq C \lambda^{-1} \norm{\chi}_{W^{4,\infty}(S^{n-1} \times \mR)} \label{flambda_estimate}
\end{align}
uniformly over $\lambda \geq 1$. This concludes the construction of the approximate solution $v = e^{i\lambda \varphi} a$.

We next find an exact solution $u = u_{\lambda}$ of \eqref{wavedp} having the form 
\[
u = v + R
\]
where $R$ is a correction term. Note that for $t$ close to $0$, $v(\,\cdot\,,t)$ is supported near $p \notin M$ and hence $v = \p_t v = 0$ on $M \times \{t=0\}$. Note also that $(\Box + q)v = F_{\lambda}$. Thus $u$ will solve \eqref{wavedp} for $f = v|_{\p M \times (0,T)}$ if $R$ solves 
\begin{equation} \label{wavedp_correction_term}
\left\{ \begin{array}{rll}
(\Box + q) R &\!\!\!= -F_{\lambda} & \quad \text{in } M \times (0,T), \\
R &\!\!\!= 0 & \quad \text{on } \partial M \times (0,T), \\
R = \p_t R &\!\!\!= 0 & \quad \text{on } \{ t = 0 \}.
\end{array} \right.
\end{equation}
By the well-posedness of this problem (see \cite[Theorem 5 in \S 7.2.3]{Evans} for the Euclidean case, again the proof in the Riemannian case is the same), there is a unique solution $R$ with 
\begin{equation} \label{r_estimates}
\norm{R}_{L^{\infty}((0,T) ; H^1(M))} \leq C \norm{F_{\lambda}}_{L^2((0,T) ; L^2(M))} \leq C \lambda^{-1} \norm{\chi}_{W^{4,\infty}}.
\end{equation}

We now fix the choice of $\chi$ so that \eqref{concentrating_solutions_first_limit} will hold. Recall that $\chi \in C^{\infty}_c(S^{n-1} \times \mR)$ is supported near $(\omega_0, 0)$. We may parametrize a neighborhood of $\omega_0$ in $S^{n-1}$ by points $y' \in \mR^{n-1}$ so that $\omega_0$ corresponds to $0$, and thus we may think of $\chi$ as a function in $\mR^n$ supported near $0$. Let $\zeta \in C^{\infty}_c(\mR^n)$ satisfy $\zeta = 1$ near $0$ and $\norm{\zeta}_{L^2(\mR^n)} = 1$, and choose 
\[
\chi(y) := \eps^{-n/2} \zeta(y/\eps)
\]
where 
\[
\eps = \eps(\lambda) = \lambda^{-\frac{1}{n+8}}.
\]
With this choice 
\[
\norm{\chi}_{L^2(\mR^n)} = 1, \qquad \norm{\chi}_{W^{4,\infty}(\mR^n)} \lesssim \eps^{-n/2-4} \lesssim \lambda^{1/2}.
\]
It follows from \eqref{r_estimates} that 
\[
\norm{v}_{L^2(M \times (0,T))} \lesssim 1, \qquad \norm{R}_{L^2(M \times (0,T))} \lesssim \lambda^{-1/2}.
\]
Since $u = v + R$, the integral in \eqref{concentrating_solutions_first_limit} has the form 
\begin{align*}
\int_{M} \int_0^T \psi \abs{u}^2 \,dV \,dt &= \int_{M} \int_0^T \psi \abs{v}^2 \,dV \,dt + O(\lambda^{-1/2}) \\
 &= \int_{M} \int_0^T \psi \abs{a_0}^2 \,dV \,dt + O(\lambda^{-1/2}).
\end{align*}
We recall that $\breve{a}_0 = c^{-1} \chi$ where $c$ is given by \eqref{c_integrating_factor}. Using that $\psi \abs{a_0}^2$ is compactly supported in $M^{\mathrm{int}} \times (0,T)$, changing variables according to \eqref{new_z_w_coordinates}, and identifying $y' \in \mR^{n-1}$ with $\omega \in S^{n-1}$, we obtain 
\begin{align*}
 &\int_{M} \int_0^T \psi \abs{a_0}^2 \,dV \,dt = \int_{S^{n-1}} \int \int (\psi \abs{a_0}^2 \det(g)^{1/2})(\omega,r,t) \,d\omega \,dr \,dt \\
 & \qquad = \int_{\mR^{n+1}} \psi(y',z-w,z+w) \chi(\omega,w)^2 \frac{\det(g(y',z-w))^{1/2} }{c(y',z,w)^{2}} \,dy' \,dz \,dw.
 \end{align*}
By \eqref{c_integrating_factor} one has $\frac{\det(g(y',z-w))^{1/2} }{c(y',z,w)^{2}} \equiv 1$. It follows that 

%Using that $\psi \abs{a_0}^2$ is compactly supported in $M^{\mathrm{int}} \times (0,T)$, we may use the $(y',r,t)$ coordinates (where $y' \in \mR^{n-1}$ corresponds to $\omega \in S^{n-1}$) to see that 
%\int_{\mR^{n+1}} \psi(y',r,t) \eps^{-n} \zeta(\frac{y'}{\eps}, \frac{t-r}{2\eps})^2 \,dy' \,dr \,dt + O(\lambda^{-1/2})
\begin{align*}
 &\int_{M} \int_0^T \psi \abs{u}^2 \,dV \,dt \\
 & \qquad = \int_{\mR^{n+1}} \psi(y',z-w,z+w) \eps^{-n} \zeta(y'/\eps, w/\eps)^2 \,dy' \,dz \,dw + O(\lambda^{-1/2}).
\end{align*}
Finally, changing $y'$ to $\eps y'$ and $w$ to $\eps w$ and letting $\lambda \to \infty$ (so $\eps \to 0$) yields 
\begin{align*}
\lim_{\lambda \to \infty} \int_{M} \int_0^T \psi \abs{u}^2 \,dV \,dt &= \int_{\mR^{n+1}} \psi(0',z,z) \zeta(y',w)^2 \,dy' \,dz \,dw \\
 &= \int_{-\infty}^{\infty} \psi(0',z,z) \,dz
\end{align*}
by the normalization $\norm{\zeta}_{L^2(\mR^n)} = 1$ and the fact that $\psi \in C^{\infty}_c(M^{\mathrm{int}} \times [0,T])$. Undoing the changes of coordinates, we see that the curve $(0',z,z)$ in the $(y',r,t)$ coordinates corresponds to $t \mapsto (\omega_0,t,t)$ in the $(\omega, r, t)$ coordinates. Thus 
\[
\int_{-\infty}^{\infty} \psi(0',z,z) \,dz = \int_0^{\ell} \psi(\gamma(t),t+\sigma) \,dt
\]
which proves \eqref{concentrating_solutions_first_limit}.

It remains to prove \eqref{concentrating_solutions_second_limit}. It is enough to construct a solution $\tilde{u} = v + \tilde{R}$ for potential $\tilde{q}$ where $\tilde{R} = \p_t \tilde{R} = 0$ on $\{ t=T \}$. Since $\gamma(t)$ exits $M$ after time $\ell < T$, we have $v|_{M \times [\sigma+\ell+\eps, \sigma+\ell+2\eps]} = 0$ for some small $\eps > 0$. Redefining $v$ to be zero for $t \geq \sigma+\ell+2\eps$, we see that \eqref{flambda_estimate} still holds. Then we choose $\tilde{R}$ solving \eqref{wavedp_correction_term} but with $\tilde{R} = \p_t \tilde{R} = 0$ on $\{ t=T \}$ instead of $\{ t=0 \}$. We can do such a construction for the potential $\tilde{q}$ instead of $q$. Since $\varphi$ and $a_0$ are independent of the potential $q$, the same argument as above proves \eqref{concentrating_solutions_second_limit}.
\end{proof}

\subsection{Special solutions in the general case -- Gaussian beams} \label{subseq_gaussian_beam}

Above we used the assumption that $(M,g)$ is simple in order to find a global smooth real valued solution $\varphi$ of the eikonal equation in $M$. For general manifolds $(M,g)$ such solutions do not exist in general. We will remedy this by using a \emph{Gaussian beam} construction, which involves two modifications:
\begin{enumerate}
\item[1.]
The phase function is \emph{complex valued} with imaginary part growing quadratically away from the curve of interest.
\item[2.]
The eikonal equation is only solved to \emph{infinite order on the curve}, instead of globally. This boils down to finding a solution of a \emph{matrix Riccati equation} with positive definite imaginary part.
\end{enumerate}
The Gaussian beam construction is classical and goes back to \cite{BabichLazutkin, Hormander1971}. We will sketch briefly the main ideas in this construction and we refer to \cite{KKL, Ralston, Ralston_note} for further details. A general version of this construction, with applications to inverse problems, is given in \cite{OSSU}.

\subsubsection{General setup}

Let $(M,g)$ be a general compact manifold with boundary, embedded in a closed manifold $(N,g)$. Recall that we would like to find an approximate solution of $Pv = 0$ in $M \times [0,T]$ where $P = \p_t^2 - \Delta_g$, so that $v$ concentrates near a light curve $(\eta(t), t + \sigma)$ where $\eta(t)$ is a geodesic in $M$ and $\sigma$ is a time delay parameter.

It turns out that the construction of concentrating solutions depends very little on the fact that one is working with the wave equation. In fact the Gaussian beam construction can be carried out for any differential operator $P$ on a manifold $X$ and one can find approximate solutions concentrating near a curve $x: [0,T] \to X$, provided that 
\begin{enumerate}
\item[(a)]  
the principal symbol $p \in C^{\infty}(T^* X)$ of $P$ is \emph{real valued}; and 
\item[(b)]  
the curve $x(t)$ is the spatial projection of a \emph{null bicharacteristic curve} $\gamma: [0,T] \to T^* X$, i.e.\ $x(t) = \pi(\gamma(t))$ where $\pi: T^* X \to X$ is the base projection to $X$. We will also assume that 
\begin{itemize}
\item 
$\gamma: [0,T] \to T^* X$ is injective (i.e.\ non-periodic, so that we do not need to worry about the curve closing); and 
\item 
$\dot{x}(t) \neq 0$ for $t \in [0,T]$ (i.e.\ $x(t)$ has no cusps, to simplify the construction).
\end{itemize}
\end{enumerate}
The principal symbol of $\p_t^2 - \Delta_g$ is $-\tau^2 + \abs{\xi}_g^2$ which is clearly real valued, and one can check that the light curve $(\eta(t), t + \sigma)$ is indeed the projection of some null bicharacteristic curve $\gamma(t)$ as in (b). Below we will consider a general differential operator $P$ on an $n$-dimensional manifold $X$ and a curve $x: [0,T] \to X$ such that (a) and (b) hold.

\begin{remark}
We can give a microlocal explanation for the construction that follows. Since $p \in C^{\infty}(T^* X)$ is real, it induces a Hamilton vector field $H_p$ on $T^* X$. In local coordinates $(x,\xi)$ on $T^* X$ one has 
\[
H_p = (\nabla_{\xi} p(x,\xi), -\nabla_x p(x,\xi)).
\]
The integral curves of $H_p$ are called \emph{bicharacteristic curves} for $P$. Since $H_p p = 0$, the quantity $p$ is conserved along bicharacteristic curves. A bicharacteristic curve in $p^{-1}(0)$ is called a \emph{null bicharacteristic curve}. Given any $(x_0,\xi_0) \in p^{-1}(0)$, there is a unique bicharacteristic curve $\gamma(t)$ with $\gamma(0) = (x_0, \xi_0)$. In local coordinates one has $\gamma(t) = (x(t), \xi(t))$ where 
\begin{align*}
\dot{x}(t) &= \nabla_{\xi} p(x(t), \xi(t)), \\
\dot{\xi}(t) &= -\nabla_{x} p(x(t), \xi(t)).
\end{align*}

Differential operators (and even $\Psi$DOs) with real principal symbol satisfy the fundamental \emph{propagation of singularities} theorem: if $u$ solves $Pu = 0$, then $\mathrm{WF}(u)$ is contained in $p^{-1}(0)$ and it is invariant under the bicharacteristic flow. This is often used as a regularity result stating that if $u$ is smooth at $(x_0,\xi_0) \in p^{-1}(0)$ (i.e.\ $(x_0,\xi_0) \notin \mathrm{WF}(u)$), then $u$ is smooth on the whole null bicharacteristic through $(x_0,\xi_0)$. This result is sharp in the sense that if $\gamma$ is a suitable null bicharacteristic, there is an (approximate) solution of $Pu = 0$ whose wave front set is precisely on $\gamma$. Proposition \ref{prop_gelfand_concentrating_solutions} is a semiclassical example of such a result, and this yields interesting special solutions for $P$. It is this facet of propagation of singularities that is often useful for solving inverse problems. 
\end{remark}

\subsubsection{Deriving the eikonal equation}

Recall the geometric optics ansatz: we would like to construct approximate solutions to $Pv = 0$ in $X$ concentrating near the spatial projection $x(t)$ of a null bicharacteristic curve $\gamma(t)$, where $v$ has the form 
\[
v = e^{i\lambda \varphi} a.
\]
In the case of the wave equation, with $P = \p_t^2 - \Delta_g$, we computed in \eqref{wave_eq_conjugation_first} that 
\[
(\Box+q)v = e^{i\lambda \varphi} \big( \lambda^2 \left[ \abs{\nabla_x \varphi}_g^2 - (\p_t \varphi)^2 \right] a + i \lambda La + (\Box + q)a \big)
\]
where $L$ is a certain first order differential operator. Note that the quantity on brackets can be written as $p(x,t,\nabla_{x,t} \varphi(x,t))$ where $p(x,t,\xi,\tau) = -\tau^2 + \abs{\xi}_g^2$ the principal symbol of $P$. A similar direct computation can be performed for any differential operator $P$ of order $m$, and it gives  
\begin{equation} \label{p_conjugated}
Pv = e^{i\lambda \varphi} ( \lambda^m \left[ p(x, \nabla \varphi(x)) \right] a + i \lambda^{m-1} L a + O(\lambda^{m-2}) )
\end{equation}
where $L$ is a first order differential operator in $X$ given in local coordinates by 
\begin{equation} \label{l_explicit}
L = -\p_{\xi_j} p(x, \nabla \varphi(x)) \p_{x_j} + b(x)
\end{equation}
for some function $b$. In this argument we are always restricting to a local coordinate patch; in general one can glue together the functions from different coordinate patches.

Thus in order to kill the $\lambda^m$ term, ideally the phase function should satisfy the eikonal equation 
\[
p(x, \nabla \varphi(x)) = 0 \text{ in $X$}.
\]
Now instead of finding a smooth solution $\varphi$ in $X$, we wish to find a smooth function $\varphi$ near the curve $x(t)$ that solves the eikonal equation to infinite order on the curve, in the sense that 
\begin{equation} \label{eikonal_infinite_order}
\p^{\alpha}(p(x, \nabla \varphi(x)))|_{x(t)} = 0 \text{ for $t \in [0,T]$, for any multi-index $\alpha$.}
\end{equation}
It is clear that \eqref{eikonal_infinite_order} only involves derivatives of $\varphi$ on the curve $x(t)$. Thus it is our task to prescribe all these derivatives, i.e.\ the \emph{formal Taylor series} of $\varphi$ on the curve $x(t)$, so that \eqref{eikonal_infinite_order} holds. We can then use Borel summation to find a smooth function $\varphi$ near the curve $x(t)$ which has all the correct derivatives and thus satisfies \eqref{eikonal_infinite_order}.

\subsubsection{Solving the eikonal equation to infinite order on the curve}

Recall that $\gamma: [0,T] \to T^* X$ is a segment of a null bicharacteristic curve for $P$. In local coordinates $\gamma(t) = (x(t), \xi(t))$ one has the Hamilton equations 
\begin{align*}
\dot{x}(t) &= \nabla_{\xi} p(x(t), \xi(t)), \\
\dot{\xi}(t) &= -\nabla_{x} p(x(t), \xi(t)).
\end{align*}
Note that \eqref{eikonal_infinite_order} only involves $\nabla \varphi$ and its derivatives on $x(t)$. For the sake of definiteness we can choose 
\begin{equation} \label{varphi_zero}
\varphi(x(t)) := 0.
\end{equation}
Looking at zeroth order derivatives in \eqref{eikonal_infinite_order}, we see that $\varphi$ should satisfy 
\[
p(x(t), \nabla \varphi(x(t))) = 0.
\]
Now since $(x(t), \xi(t))$ is a \emph{null} bicharacteristic, this will hold if we choose 
\begin{equation} \label{nablavarphi_bicharacteristic}
\nabla \varphi(x(t)) := \xi(t).
\end{equation}
Looking at first order derivatives in \eqref{eikonal_infinite_order}, we get 
\[
\p_{x_j}(p(x, \nabla \varphi(x)))|_{x(t)} = \p_{x_j} p(x, \nabla \varphi) + \p_{\xi_l} p(x, \nabla \varphi) \p_{x_j x_l} \varphi \Big|_{x(t)}.
\]
This quantity should vanish, and indeed it always does: by the Hamilton equations $\p_{x_j} p(x, \nabla \varphi)|_{x(t)} = -\dot{\xi}_j(t)$, and by differentiating \eqref{nablavarphi_bicharacteristic} one has $\p_{\xi_l} p(x, \nabla \varphi) \p_{x_j x_l} \varphi|_{x(t)} = \dot{\xi}_j(t)$ so the two terms cancel.

Let us proceed to second order derivatives in \eqref{eikonal_infinite_order}. This will be the most important part of the construction. We compute (writing $p$ instead of $p(x, \nabla \varphi(x))$ for brevity) 
\begin{multline} \label{eikonal_second_derivative}
\p_{x_j x_k} (p(x, \nabla \varphi(x)))|_{x(t)} = \p_{x_j x_k} p + \p_{x_j \xi_l} p \,\p_{x_k x_l} \varphi + \p_{x_k \xi_l} p \, \p_{x_j x_l} \varphi \\
 + \p_{\xi_l \xi_m} p \, \p_{x_j x_l} \varphi \, \p_{x_k x_m} \varphi+ \p_{\xi_l} p \, \p_{x_j x_k x_l} \varphi \Big|_{x(t)}. 
\end{multline}
We wish to choose the second derivatives of $\varphi$ on $x(t)$, written as the matrix 
\begin{equation} \label{varphi_hessian}
H(t) := (\p_{x_j x_k} \varphi(x(t)))_{j,k=1}^n,
\end{equation}
so that the last expression vanishes. Since $\p_{\xi_l} p|_{x(t)} = \dot{x}_l(t)$, the last term in \eqref{eikonal_second_derivative} becomes 
\[
\p_{\xi_l} p \, \p_{x_j x_k x_l} \varphi |_{x(t)} = \frac{d}{dt} [\p_{x_j x_k} \varphi(x(t))] = \dot{H}_{jk}(t).
\]
Thus \eqref{eikonal_second_derivative} vanishes provided that $H(t)$ satisfies 
\begin{equation} \label{matrix_riccati}
\dot{H}+ BH + HB^t + HCH + F = 0
\end{equation}
where $B(t) := (\p_{x_j \xi_k} p)|_{x(t)}$, $C(t) := (\p_{\xi_j \xi_k} p)|_{x(t)}$, and $F(t) := (\p_{x_j x_k} p)|_{x(t)}$. The equation \eqref{matrix_riccati} is a \emph{matrix Riccati equation} for $H(t)$, i.e.\ a nonlinear ODE for $H(t)$ with quadratic nonlinearity. We will see below that one can always find a smooth solution in $[0,T]$ for the Riccati equation (this is where we need $\varphi$ to be complex, with $\mathrm{Im}(H(t))$ positive definite!).

We have now prescribed $\p^{\beta} \varphi(x(t))$ for $\abs{\beta} \leq 2$, so that \eqref{eikonal_infinite_order} holds for $\abs{\alpha} \leq 2$. Looking at third order derivatives in \eqref{eikonal_infinite_order} leads to a \emph{linear} ODE for the third order derivatives of $\varphi$ on the curve $x(t)$. Since linear ODEs with smooth coefficients always have smooth global solutions, we can prescribe the third order derivatives of $\varphi$. The same argument works for all the higher order derivatives. This concludes the construction of the formal Taylor series of $\varphi$ on the curve $x(t)$ so that \eqref{eikonal_infinite_order} holds. By Borel summation we obtain the required phase function $\varphi$.

\subsubsection{Solving the matrix Riccati equation}

The following result, which may also be found e.g.\ in \cite[Lemma 2.56]{KKL}, allows us to solve the Riccati equation \eqref{matrix_riccati}.

\begin{lemma}
Suppose that $B(t), C(t), F(t)$ are smooth real matrix functions on $[0,T]$ with $C$ and $F$ symmetric. Given any complex symmetric matrix $H_0$ with $\mathrm{Im}(H_0)$ positive definite, there is a unique smooth solution $H(t)$ of \eqref{matrix_riccati} in $[0,T]$ with $H(0) = H_0$. The matrix $H(t)$ is complex symmetric and $\mathrm{Im}(H(t))$ is positive definite on $[0,T]$.
\end{lemma}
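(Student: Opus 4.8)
The plan is to \emph{linearize} the Riccati equation \eqref{matrix_riccati}. I would introduce an auxiliary pair of $n \times n$ matrix functions $Y(t), Z(t)$ solving the linear system
\[
\dot{Y} = B^t Y + C Z, \qquad \dot{Z} = -F Y - B Z, \qquad Y(0) = I, \quad Z(0) = H_0,
\]
and set $H(t) := Z(t) Y(t)^{-1}$ on the set where $Y(t)$ is invertible. A direct computation using $\frac{d}{dt}(Y^{-1}) = -Y^{-1} \dot{Y} Y^{-1}$ shows that such an $H$ solves \eqref{matrix_riccati} with $H(0) = H_0$; conversely, by ODE uniqueness (the right-hand side of \eqref{matrix_riccati} is a polynomial in $H$, hence locally Lipschitz) this is the \emph{only} solution on any interval where it exists. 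Since the linear system has smooth coefficients on $[0,T]$, the pair $(Y,Z)$ is smooth on all of $[0,T]$, and the only obstruction to $H$ being defined and smooth on all of $[0,T]$ is that $Y(t)$ might become singular.

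To rule this out I would use the conservation of two bilinear expressions, which are manifestations of the fact that the coefficient matrix $\left(\begin{smallmatrix} B^t & C \\ -F & -B \end{smallmatrix}\right)$ is infinitesimally symplectic (precisely because $C$ and $F$ are symmetric). First, differentiating $Y^t Z - Z^t Y$ and using symmetry of $C, F$ gives $\frac{d}{dt}(Y^t Z - Z^t Y) = 0$; since this vanishes at $t=0$ (as $H_0 = H_0^t$), we get $Y^t Z = Z^t Y$ on $[0,T]$, which forces $H = Z Y^{-1}$ to be complex symmetric. Second, and this is the key point, consider the Hermitian matrix
\[
W(t) := \tfrac{1}{2i}\big( Y(t)^* Z(t) - Z(t)^* Y(t) \big).
\]
Using that $B, C, F$ are \emph{real} and $C, F$ symmetric, the same kind of computation yields $\dot{W} = 0$, so $W(t) \equiv W(0) = \mathrm{Im}(H_0)$, which is positive definite by hypothesis.

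The invertibility of $Y(t)$ on $[0,T]$ now follows at once: if $Y(t_0) v = 0$ for some $v \neq 0$, then
\[
v^* W(t_0) v = \tfrac{1}{2i}\big( (Y(t_0) v)^* Z(t_0) v - (Z(t_0) v)^* Y(t_0) v \big) = 0,
\]
contradicting positive definiteness of $W(t_0) = \mathrm{Im}(H_0)$. Hence $H(t) = Z(t) Y(t)^{-1}$ is well-defined and smooth on all of $[0,T]$, it is complex symmetric by the first conservation law, and from $Y^*(H - H^*)Y = Y^* Z - Z^* Y = 2i\, W(t)$ we obtain $\mathrm{Im}(H(t)) = (Y(t)^*)^{-1}\, \mathrm{Im}(H_0)\, Y(t)^{-1}$, which is positive definite as a congruence transform of a positive definite matrix. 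Combining this with the uniqueness noted above completes the proof.

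I expect the main difficulty to be purely computational bookkeeping in verifying the two conservation laws, together with the conceptual step of realizing that one should track the conserved Hermitian form $W$ rather than attack the quadratic ODE \eqref{matrix_riccati} head-on; once $W$ is known to be constant and positive definite, the invertibility of $Y$ and all the asserted properties of $H$ drop out with essentially no further work.
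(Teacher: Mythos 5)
Your proof is correct and follows the same fundamental strategy as the paper's sketch: linearize the Riccati equation via $H = Z Y^{-1}$ with $(Y,Z)$ solving a linear Hamiltonian system, then exploit a conserved Hermitian pairing (the analogue of $\bar z y - \bar y z$ in the paper's scalar toy model) to rule out singularities of $Y$. What you add is precisely what the paper leaves to the reader: you write out the correct linear system for the full equation $\dot H + BH + HB^t + HCH + F = 0$ rather than the simplified $\dot H + H^2 = F$, you verify directly that $H = ZY^{-1}$ satisfies it, and you track the second conserved quantity $Y^tZ - Z^tY$ to establish symmetry of $H(t)$, a point the paper does not address at all (it is automatic when $n=1$). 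Your identity $\mathrm{Im}(H(t)) = (Y(t)^*)^{-1}\mathrm{Im}(H_0)\,Y(t)^{-1}$ is also a cleaner way to deliver positive definiteness than the contradiction-only argument, since it gives the quantitative congruence. So this is the paper's argument, executed in full generality.
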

\begin{proof}
(Sketch) For simplicity we only consider the equation 
\[
\dot{H}(t) + H(t)^2 = F(t), \qquad H(0) = H_0.
\]
The general case is analogous. Let us first consider the case $n=1$, i.e.\ $H(t)$ is a scalar function. We consider the ansatz 
\[
H(t) = \frac{z(t)}{y(t)}.
\]
Then 
\[
\dot{H}(t) + H(t)^2 = \frac{\dot{z} y - z \dot{y} + z^2}{y^2}.
\]
We note that this simplifies if $\dot{y} = z$. Thus we would like that $(z(t), y(t))$ solves the system 
\begin{equation} \label{z_y_system}
\left\{ \begin{array}{rl} \dot{z}(t) &\!\!\!= F(t) y(t), \\ \dot{y}(t) &\!\!\!= z(t). \end{array} \right.
\end{equation}
To get the initial conditions right, we would also like that 
\[
z(0) = H_0, \qquad y(0) = 1.
\]
Now this is a \emph{linear} system of ODEs, and hence always has a smooth solution $(z(t), y(t))$ on $[0,T]$.

The issue with this argument is that $y(t)$ might develop zeros, so that $H(t) = z(t)/y(t)$ would not be well defined. This can actually happen when the functions are \emph{real valued}. For instance, when $F(t) = 0$ and $H_0 = -1$, one has $z(t) = -1$ and $y(t) = 1-t$ so that $y(1) = 0$. Geometrically, if one is working with a geodesic, the zeros of $y(t)$ correspond to conjugate points. Hence on non-simple manifolds this argument would not give a smooth solution $H(t)$ on $[0,T]$.

However, if one lets $H(t)$ be \emph{complex} and chooses $H_0$ so that $\mathrm{Im}(H_0)$ is positive, this problem magically disappears. To see this, note that \eqref{z_y_system} gives (since $F$ is real) 
\[
\p_t(\bar{z} y - \bar{y} z) = F \abs{y}^2 + \abs{z}^2 - \abs{z}^2 - F \abs{y}^2 = 0.
\]
Now if $y(t_0) = 0$ for some $t_0$, the previous fact gives that 
\begin{align*}
0 &= \bar{z}(t_0) y(t_0) - \bar{y}(t_0) z(t_0) = \bar{z}(0) y(0) - \bar{y}(0) z(0) = \bar{H}_0 - H_0 \\
 &= -2i \mathrm{Im}(H_0).
\end{align*}
This is a contradiction. Hence $y(t)$ is nonvanishing on $[0,T]$ and $H(t)$ is well defined on $[0,T]$. The proof of the lemma in the general case can be concluded in a similar way by letting $z(t)$ and $y(t)$ be matrix valued.
\end{proof}

\subsubsection{Finding the amplitude $a$}

We have completed the construction of a smooth complex phase function $\varphi$ so that the eikonal equation holds to infinite order on the curve. To find the amplitude $a$ we go back to \eqref{p_conjugated} and write 
\[
a = a_0 + \lambda^{-1} a_{1} + \lambda^{-2} a_{2} + \ldots.
\]
Plugging this in \eqref{p_conjugated}, dropping the $p(x, \nabla \varphi(x))$ term (which is already very small), and looking at the highest power of $\lambda$ gives the equation 
\[
L a_0 = 0.
\]
This is a \emph{transport equation} (first order linear PDE). Again we are only interested in solving this equation to infinite order on the curve, i.e. 
\begin{equation} \label{transport_infinite_order}
\p^{\alpha}(La_0)|_{x(t)} = 0 \text{ for $t \in [0,T]$, for any multi-index $\alpha$.}
\end{equation}
Looking at zeroth order derivatives gives the equation 
\[
(L a_0)(x(t)) = 0.
\]
This is the most important part of the construction of $a$. But the explicit form for $L$ in \eqref{l_explicit}, and the fact that $\p_{\xi_j} p(x(t), \nabla \varphi(x(t))) = \dot{x}_j(t)$ by the Hamilton equations, show that this reduces to the equation 
\[
\p_t (a_0(x(t))) + c(t) a_0(x(t)) = 0.
\]
This is a \emph{linear} ODE which can always be solved (say with initial condition $a(x(0)) = 1$). Thus \eqref{transport_infinite_order} will hold for $\alpha=0$ and $a_0(x(t))$ will be a nonvanishing function.

Looking at higher order derivatives in \eqref{transport_infinite_order} gives similar linear ODE, and solving all of these and using Borel summation gives the desired function $a_0$. The functions $a_1, a_2, \ldots$ can be constructed in a similar way after taking into account some terms from $O(\lambda^{m-2})$ in \eqref{p_conjugated}. We can apply Borel summation to obtain the required amplitude $a$. Moreover, we can replace $a$ by $\chi a$ where $\chi$ a smooth cutoff function with $\chi = 1$ in a small neighborhood of $x([0,T])$, in order to have $a$ supported near the curve.

\subsubsection{Properties of $v$}

We have now constructed a function $v = e^{i\lambda \varphi} a$ where $\varphi$ and $a$ are smooth and the eikonal and transport equations are satisfied to infinite order on the curve $x(t)$. From \eqref{p_conjugated} we see that 
\[
Pv = e^{i\lambda \varphi}( \lambda^m q_m + \lambda^{m-1} q_{m-1} + \ldots)
\]
where each $q_j$ vanishes to infinite order on the curve $x(t)$. We wish to show that $Pv = O(\lambda^{-\infty})$ as $\lambda \to \infty$. We equip $X$ with some Riemannian metric and write $\delta(y)$ for the distance between $y$ and the curve $x(t)$. Since each $q_j$ vanishes to infinite order on the curve, for any $N$ there is $C_N > 0$ with 
\[
\abs{q_j(y)} \leq C_N \delta^N.
\]
Moreover, by \eqref{varphi_zero}, \eqref{nablavarphi_bicharacteristic}, \eqref{varphi_hessian} we have the Taylor series 
\[
\varphi(y) = \xi(t) \cdot (y-x(t)) + \frac{1}{2} H_{jk}(t) (y_j - x_j(t))(y_k-x_k(t)) + O(\delta^3).
\]
Here the term $\xi(t) \cdot (y-x(t))$ is real. Thus by the property $\mathrm{Im}(H(t)) > 0$ we have 
\begin{equation} \label{im_varphi_condition}
\mathrm{Im}(\varphi(y)) \geq c \delta(y)^2
\end{equation}
for some $c > 0$. This implies that for any $N$ there is $C_N$ with 
\[
\abs{e^{i\lambda \varphi} \lambda^j q_j} = e^{-\lambda \mathrm{Im}(\varphi)} \abs{\lambda^j q_j} \leq e^{-c \lambda \delta^2} \lambda^j  C_N \delta^N \leq C_N \lambda^{j-N/2}.
\]
Consequently, choosing $N$ large enough we obtain that for any $k$ there is $C_k > 0$ so that 
\[
\abs{Pv} \leq C_k \lambda^{-k} \text{ in $X$}
\]
as required.

Incidentally, \eqref{im_varphi_condition} explains why this is called a \emph{Gaussian beam} construction: the approximate solution has Gaussian decay away from the curve $x(t)$ since
\[
\abs{e^{i\lambda \varphi} a} = e^{-\lambda \mathrm{Im}(\varphi)} \abs{a} \leq C e^{-c \lambda \delta^2}.
\]
Moreover, the approximate solution is nontrivial since $a(x(t))$ is nonvanishing. After multiplying $v$ by a suitable constant, we have proved most of Proposition \ref{prop_gelfand_concentrating_solutions} (in the case of a general differential operator $P$ with real principal symbol). The remaining parts can be checked from the rather explicit form of $v$.

\begin{exercise}
Let $P = \p_t^2 - \Delta$ in $\mR^{n+1}$. Compute the null bicharacteristic curves of $P$.
\end{exercise}

\begin{exercise}
Let $P = \p_t^2 - \Delta$ in $\mR^{n+1}$, let $\omega \in S^{n-1}$, let $\lambda > 0$ and let $a \in C^{\infty}_c(\mR)$ have support equal to $\ol{B(0,\eps)}$. Show that the function 
\[
u(x,t) = e^{i\lambda(t-x \cdot \omega)} a(t-x \cdot \omega)
\]
solves $Pu = 0$ and satisfies $\mathrm{supp}(u) = \{ (x,t) \,:\, \abs{t-x \cdot \omega} \leq \eps \}$.
\end{exercise}

\begin{exercise}[Time-dependent case] \label{ex_wave}
Let $(M,g)$ be simple and assume that $q \in C^{\infty}_c(M^{\mathrm{int}} \times \mR)$. Consider the Dirichlet problem 
\begin{equation} \label{wavedp_r}
\left\{ \begin{array}{rll}
(\Box + q) u &\!\!\!= 0 & \quad \text{in } M \times \mR, \\
u &\!\!\!= f & \quad \text{on } \partial M \times \mR, \\
u &\!\!\!= 0 & \quad \text{for } t \ll 0.
\end{array} \right.
\end{equation}
Here $t \ll 0$ means that $t \leq -T_0$ for some $T_0 \geq 0$. You may assume that this problem is well-posed and for any $f \in C^{\infty}_c(\p M \times \mR)$ there is a unique solution $u \in C^{\infty}(M \times \mR)$. Consider the hyperbolic DN map 
\[
\Lambda_q: C^{\infty}_c(\p M \times \mR) \to C^{\infty}(\p M \times \mR), \ \ f \mapsto \p_{\nu} u|_{\p M \times \mR}.
\]
\begin{enumerate}
\item[(a)] 
Formulate a counterpart of Lemma \ref{lemma_gelfand_integral_identity} in this case.
\item[(b)] 
Formulate a counterpart of Proposition \ref{prop_gelfand_concentrating_solutions}. Which parts of the proof need to be modified?
\item[(c)] 
Use parts (a) and (b) to show that if $\Lambda_{q_1} = \Lambda_{q_2}$, then 
\[
\int_0^{\ell} q_1(\gamma(t), t+\sigma) \,dt = \int_0^{\ell} q_2(\gamma(t), t+\sigma) \,dt
\]
for any maximal geodesic $\gamma: [0,\ell] \to M$ and any $\sigma \in \mR$.
\item[(d)] 
Use the Fourier transform in $\sigma$ and injectivity of the geodesic X-ray transform in $(M,g)$ to invert the light ray transform in part (c) and to prove that $q_1 = q_2$. (\emph{Hint.} Look at the derivatives of the Fourier transform at $0$.)
\end{enumerate}
\end{exercise}

\section{Calder\'on problem} \label{sec_calderon}

Electrical Impedance Tomography (EIT) is an imaging method with applications in seismic and medical imaging and nondestructive testing. The method is based on the following important inverse problem.

\begin{quote}
{\bf Calder\'on problem:} Is it possible to determine the electrical conductivity of a medium by making voltage and current measurements on its boundary?
\end{quote}

In a standard formulation the medium is modelled by a bounded domain $\Omega \subset \mR^n$ with smooth boundary (in practice $n=3$), and the electrical conductivity is a positive function $\gamma \in C^{\infty}(\ol{\Omega})$. Under the assumption of no sources or sinks of current in $\Omega$, a voltage $f$ at the boundary induces a potential $u$ in $\Omega$ that solves the Dirichlet problem for the conductivity equation, 
\begin{equation} \label{conductivitydp}
\left\{ \begin{array}{rll}
\nabla \cdot \gamma \nabla u &\!\!\!= 0 & \quad \text{in } \Omega, \\
u &\!\!\!= f & \quad \text{on } \partial \Omega.
\end{array} \right.
\end{equation}
Since $\gamma \in C^{\infty}(\closure{\Omega})$ is positive, the equation is uniformly elliptic and there is a unique solution $u \in C^{\infty}(\ol{\Omega})$ for any boundary value $f \in C^{\infty}(\partial \Omega)$. One can define the (elliptic) Dirichlet-to-Neumann map (DN map) as 
\begin{equation*}
\Lambda_{\gamma}: C^{\infty}(\partial \Omega) \to C^{\infty}(\partial \Omega),  \ \ f \mapsto \gamma \p_{\nu} u |_{\partial \Omega}.
\end{equation*}
Here $\nu$ is the outer unit normal to $\p \Omega$ and $\p_{\nu} u|_{\p \Omega} = \nabla u \cdot \nu|_{\p \Omega}$ is the normal derivative of $u$. Physically, $\Lambda_{\gamma} f$ is the current flowing through the boundary.

The Calder\'on problem (also called the \emph{inverse conductivity problem}) is to determine the conductivity function $\gamma$ from the knowledge of the map $\Lambda_{\gamma}$. That is, if the measured current $\Lambda_{\gamma} f$ is known for all boundary voltages $f \in C^{\infty}(\p \Omega)$, one would like to determine the conductivity $\gamma$.

If the electrical properties of the medium depend on direction, which happens e.g.\ in muscle tissue, the medium is said to be \emph{anisotropic} and $\gamma = (\gamma^{jk})$ is a positive definite matrix function. When $n \geq 3$ one can write $\gamma^{jk} = \det(g)^{1/2} g^{jk}$ for some Riemannian metric $g$, and the conductivity equation becomes 
\[
\mathrm{div}_g(\nabla_g u) = 0.
\]
Thus Riemannian geometry appears already when considering anisotropic conductivities in Euclidean domains. More generally, if $(M,g)$ is a compact manifold with smooth boundary, we can consider the equation 
\begin{equation} \label{conductivity_riemannian}
\mathrm{div}_g(\gamma \nabla_g u) = 0
\end{equation}
for a positive function $\gamma \in C^{\infty}(M)$. This equation contains both equations above as a special case.

As a final reduction, if we replace $u$ by $\gamma^{-1/2} u$ in \eqref{conductivity_riemannian}, we obtain the equivalent \emph{Schr\"odinger equation} 
\[
(-\Delta_g + q)u = 0 \text{ in $M$}
\]
where $q = \frac{\Delta_g(\gamma^{1/2})}{\gamma^{1/2}}$. Assuming that $0$ is not a Dirichlet eigenvalue for $-\Delta_g + q$ in $M$, the DN map is given by 
\[
\Lambda_{q}: C^{\infty}(\partial M) \to C^{\infty}(\partial M),  \ \ f \mapsto \p_{\nu} u |_{\partial \Omega},
\]
where $f$ is the Dirichlet boundary value for the solution $u$. It is this equation that we will study in Section \ref{subseq_calderon_interior} when recovering the coefficients in the interior.

The Calder\'on problem is by now reasonably well understood in Euclidean domains \cite{SylvesterUhlmann, Nachman, AstalaPaivarinta, Bukhgeim}. Moreover, if $\dim(M) = 2$ and $M$ is simply connected, then isothermal coordinates, see Theorem \ref{thm_isothermal}, can be used to reduce the Riemannian case to the Euclidean case. We will thus assume from now on that $\dim(M) \geq 3$. In this case the problem is open in general, but there are results in special product geometries.

\begin{definition}
We say that $(M,g)$ is \emph{transversally anisotropic} if 
\[
(M, g) \subset \subset (\mR \times M_0, g), \qquad g = e \oplus g_0,
\]
where $(\mR, e)$ is the Euclidean line and $(M_0,g_0)$ is a compact $(n-1)$-manifold with boundary called the \emph{transversal manifold}.
\end{definition}

The definition means that $(M,g)$ is contained in some product manifold $\mR \times M_0$ with coordinates $(t, x)$ where $t \in \mR$ and $x \in M_0$, and the metric looks like 
\[
g(t, x) = \left( \begin{array}{cc} 1 & 0 \\ 0 & g_0(x) \end{array} \right).
\]
The Laplace-Beltrami operator has the form 
\[
-\Delta_g = -\p_t^2 - \Delta_x
\]
where $\Delta_x$ is the Laplace-Beltrami operator of $(M_0,g_0)$. Note that this looks similar to the Gelfand problem studied in Section \ref{sec_gelfand}, where we studied the wave operator $\p_t^2 - \Delta_x$. Formally the \emph{Wick rotation}, i.e.\ the map $t \mapsto it$, converts one equation to the other.

It turns out that, surprisingly, there are in fact analogies between the elliptic and hyperbolic inverse problems. One has the following counterpart of Theorem \ref{thm_gelfand_uniqueness} proved in \cite{DKSU, DKLS}.

\begin{theorem}[Uniqueness] \label{thm_calderon}
Let $(M,g)$ be a compact transversally anisotropic manifold. Assume also that the transversal manifold $(M_0,g_0)$ has injective geodesic X-ray transform. If $q_1, q_2 \in C^{\infty}(M)$ and if 
\[
\Lambda_{q_1} = \Lambda_{q_2},
\]
then $q_1 = q_2$ in $M$.
\end{theorem}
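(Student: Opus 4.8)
The plan is to run the complex geometric optics (CGO) method of \cite{SylvesterUhlmann}, in the form adapted to transversally anisotropic geometry via limiting Carleman weights \cite{DKSU} and transversal Gaussian beam quasimodes \cite{DKLS}. The first step is to reduce to an orthogonality relation. Exactly as in Lemma \ref{lemma_gelfand_integral_identity}, but with the elliptic operator $-\Delta_g + q$ and Green's formula in place of the hyperbolic energy identity, the hypothesis $\Lambda_{q_1} = \Lambda_{q_2}$ yields
\[
\int_M (q_1 - q_2)\, u_1 u_2 \, dV_g = 0
\]
for all $u_j \in H^1(M)$ solving $(-\Delta_g + q_j) u_j = 0$ in $M$. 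A boundary determination argument (testing $\Lambda_{q_j}$ against solutions that oscillate rapidly near a boundary point) also shows that $\Lambda_{q_1} = \Lambda_{q_2}$ forces $q_1$ and all its derivatives to agree with those of $q_2$ on $\partial M$, so $q := q_1 - q_2$ extends by zero to a smooth compactly supported function on $\mathbb{R} \times M_0$. It then suffices to force $q \equiv 0$ from the orthogonality relation, tested on a sufficiently rich family of solutions.

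Next I would construct CGO solutions. Write points of $\mathbb{R} \times M_0$ as $(x_1, x')$. The decisive structural fact is that, since $g = e \oplus g_0$ is a product and $x_1$ is linear in the Euclidean factor, the function $\varphi(x_1, x') = \pm x_1$ is a \emph{limiting Carleman weight} for $-\Delta_g$; this gives a Carleman estimate
\[
\frac{h}{C}\, \|w\|_{L^2(M)} \,\le\, \bigl\| e^{\varphi/h}(-h^2\Delta_g)e^{-\varphi/h} w \bigr\|_{L^2(M)}, \qquad w \in C^\infty_c(M^{\mathrm{int}}),\ \ 0 < h \le h_0,
\]
and, by the usual duality argument with the zeroth order term $h^2 q$ absorbed, solvability of the conjugated equation. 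Consequently, for each small $h > 0$ and each fixed $\lambda \in \mathbb{R}$ there are solutions of $(-\Delta_g + q_j) u_j = 0$ in $M$ of the form $u_1 = e^{-x_1/h}(v_h + r_1)$ and $u_2 = e^{x_1/h}(w_h + r_2)$ with $\|r_j\|_{L^2(M)} = O(h)$, the exponential prefactors cancelling in the product $u_1 u_2$.

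The amplitudes $v_h, w_h$ are chosen as \emph{transversal Gaussian beam quasimodes}. Fix a unit-speed geodesic $\gamma_0$ of $(M_0, g_0)$ and apply the Gaussian beam construction of Section \ref{subseq_gaussian_beam} on $M_0$, to the operator $-\Delta_{g_0} - 1$ (semiclassically, $-h^2\Delta_{g_0} - 1$), whose null bicharacteristics project exactly to unit-speed geodesics of $(M_0, g_0)$; one also inserts a fixed oscillation $e^{i\lambda x_1}$ in the Euclidean variable, and takes $v_h$ and $w_h$ built from conjugate phases so that $v_h w_h$ concentrates on $\gamma_0$. Crucially, the transversal phase need only solve the eikonal equation $|\nabla_{g_0}\varphi|_{g_0}^2 = 1$ to infinite order along $\gamma_0$, with positive imaginary part off the geodesic, so no simplicity assumption on $M_0$ is needed. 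Plugging these solutions into the orthogonality relation, $u_1 u_2$ is, up to $O(h)$ in $L^2$, a Gaussian beam concentrated on $\mathbb{R} \times \gamma_0$ carrying the oscillation $e^{i\lambda x_1}$; letting $h \to 0$ and applying Gaussian stationary phase in the transversal variables yields
\[
\int \hat q(\lambda, \gamma_0(s))\, ds = 0
\]
for every $\lambda \in \mathbb{R}$ and every maximal geodesic $\gamma_0$ of $(M_0, g_0)$, where $\hat q(\lambda, \cdot)$ is the Fourier transform of $q$ in $x_1$. Thus the geodesic X-ray transform of $\hat q(\lambda, \cdot) \in C^\infty_c(M_0^{\mathrm{int}})$ vanishes for every $\lambda$; by the assumed injectivity of the geodesic X-ray transform on $(M_0, g_0)$ we conclude $\hat q(\lambda, \cdot) \equiv 0$ for all $\lambda$, hence $q \equiv 0$ and $q_1 = q_2$ in $M$.

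The main obstacle is the bookkeeping of the two scales involved: the Carleman parameter $1/h$, and the transversal concentration width $\sqrt h$ of the beam. One must check that the remainders $r_j$ produced by the Carleman estimate, together with the error in the Gaussian beam quasimode, are genuinely negligible against the leading $O(1)$ contribution of $\int \hat q(\lambda, \gamma_0(s))\, ds$ as $h \to 0$; this requires quantitative ($W^{k,\infty}$-type) control of the beam amplitudes and of how they feed into the Carleman solvability bound. A further technical point is running the Gaussian beam construction on a manifold \emph{with boundary}: one works with geodesics $\gamma_0$ meeting $\partial M_0$ non-tangentially (or enlarges $(M_0, g_0)$ slightly, using that $q$ is compactly supported in the interior), so that the beam, and the set $\mathbb{R} \times \gamma_0$, stays inside $M$ where the equation and the orthogonality relation live. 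The boundary determination step and the absorption of $h^2 q$ into the Carleman estimate are routine but also needed.
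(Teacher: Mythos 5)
Your proposal is correct and follows essentially the same route as the paper (via \cite{DKSU, DKLS}): reduce to the orthogonality relation, build CGO solutions from the limiting Carleman weight $e^{\mp x_1/h}$ times transversal Gaussian-beam quasimodes carrying a fixed oscillation $e^{i\lambda x_1}$, correct them via the Carleman estimate (Proposition \ref{prop_carleman}), pass to the limit to recover the geodesic X-ray transform of $\hat q(\lambda,\cdot)$ on $M_0$, and conclude by the assumed injectivity. The only notable presentational differences are that the paper sketches the transversal eikonal construction in detail only when $M_0$ is simple (global polar coordinates give $\Phi = it - r$) and merely points to Gaussian beams for the general case --- which you carry out explicitly --- and that the paper packages the $x_1$-oscillation as a complex frequency $\lambda+i\sigma$ applied to $\Phi$, while you keep the Carleman weight and the oscillation $e^{i\lambda x_1}$ separate; you also make explicit the boundary-determination/extension-by-zero step that the paper's sketch leaves implicit.
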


In particular, uniqueness holds by Theorem \ref{thm_xray} if the transversal manifold is \emph{simple}. By conformal invariance Theorem \ref{thm_calderon} holds more generally for metrics of the form $g = c(e \oplus g_0)$ for $c \in C^{\infty}(M)$ positive. The following questions remain open.

\begin{question} \label{question_transversal}
Is Theorem \ref{thm_calderon} true for any transversal manifold $(M_0,g_0)$?
\end{question}

\begin{question}
Is Theorem \ref{thm_calderon} true for any compact manifold $(M,g)$?
\end{question}

\begin{question}[Partial data] \label{question_partialdata}
If $\Omega \subset \mR^n$, $n \geq 3$, is a bounded domain and $\Gamma \subset \p \Omega$ is open, does the knowledge of $\Lambda_{\gamma} f|_{\Gamma}$ for all $f \in C^{\infty}_c(\Gamma)$ determine $\gamma$ uniquely?
\end{question}

Similarly as for the wave equation, it turns out that one can get better results for \emph{nonlinear} elliptic equations. Consider the model equation 
\begin{equation} \label{semilinear_dp}
\left\{ \begin{array}{rll}
-\Delta_g u + q u^3 &\!\!\!= 0 & \quad \text{in } M, \\
u &\!\!\!= f & \quad \text{on } \partial M.
\end{array} \right.
\end{equation}
In fact the method applies to the nonlinearities $q u^m$ for any integer $m \geq 3$. If $f \in C^{\infty}(\p M)$ is small (say in the $C^{2,\alpha}(\p M)$ norm), a Banach fixed point argument implies that \eqref{semilinear_dp} has a unique \emph{small} solution $u \in C^{\infty}(M)$. One can define the \emph{nonlinear DN map} 
\[
\Lambda_q^{\mathrm{NL}}: \{ f \in C^{\infty}(\p M) \;;\, \norm{f}_{C^{2,\alpha}(\p M)} < \delta \} \to C^{\infty}(\p M), \ \ \Lambda_q f = \p_{\nu} u|_{\p M}.
\]
It was proved independently in \cite{FO, LLLS} that Question \ref{question_transversal}, which is open for the linear Schr\"odinger equation, can be solved for the nonlinear equation \eqref{semilinear_dp}.

\begin{theorem}[Nonlinear case] \label{thm_calderon_semilinear}
Let $(M,g)$ be a compact transversally aniso\-tropic manifold, and let $q_1, q_2 \in C^{\infty}(M)$. If 
\[
\Lambda_{q_1}^{\mathrm{NL}} = \Lambda_{q_2}^{\mathrm{NL}},
\]
then $q_1 = q_2$ in $M$.
\end{theorem}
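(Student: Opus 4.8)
The plan is to use the \emph{higher order linearization} method to reduce the nonlinear statement to an integral identity for products of harmonic functions, and then to feed into that identity the same complex geometric optics solutions used in the linear case (Theorem~\ref{thm_calderon}). Fix $f_1,f_2,f_3\in C^\infty(\p M)$ and small real parameters $\eps_1,\eps_2,\eps_3$, and let $u=u(\,\cdot\,;\eps)$ be the small solution of \eqref{semilinear_dp} with boundary value $\eps_1f_1+\eps_2f_2+\eps_3f_3$. The implicit function theorem (in the $C^{2,\alpha}$ scale) shows $\eps\mapsto u$ is smooth with $u|_{\eps=0}=0$. Differentiating $-\Delta_g u+qu^3=0$ once in $\eps_j$ at $\eps=0$ shows that $v_j:=\p_{\eps_j}u|_{\eps=0}$ solves $-\Delta_g v_j=0$ with $v_j|_{\p M}=f_j$, i.e.\ $v_j$ is harmonic; the second mixed derivative again solves $-\Delta_g(\cdot)=0$ with zero boundary data, hence vanishes (this is why a cubic nonlinearity forces us to third order). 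Differentiating three times and using $u|_{\eps=0}=0$, the function $w:=\p_{\eps_1}\p_{\eps_2}\p_{\eps_3}u|_{\eps=0}$ satisfies
\[
-\Delta_g w + 6\,q\,v_1v_2v_3 = 0 \ \text{in } M, \qquad w|_{\p M}=0 .
\]
Applying $\p_{\eps_1}\p_{\eps_2}\p_{\eps_3}|_{\eps=0}$ to $\Lambda_{q_1}^{\mathrm{NL}}=\Lambda_{q_2}^{\mathrm{NL}}$ gives $\p_\nu w_1=\p_\nu w_2$ on $\p M$, where $w_j$ is the solution above with $q=q_j$ and the $v_j$ are the same for both potentials. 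Hence $W:=w_1-w_2$ has vanishing Cauchy data on $\p M$ and $-\Delta_g W=-6(q_1-q_2)v_1v_2v_3$, so testing against any harmonic $v_0$ (Green's formula; all boundary terms vanish) yields
\[
\int_M (q_1-q_2)\,v_0v_1v_2v_3 \, dV_g = 0
\]
for \emph{all} $v_0,v_1,v_2,v_3\in C^\infty(M)$ with $-\Delta_g v_j=0$.

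It then remains to prove: if $f\in C^\infty(M)$ and $\int_M f\,v_0v_1v_2v_3\,dV_g=0$ for all harmonic $v_j$, then $f\equiv 0$. Here I would use the product structure $(M,g)\subset \subset(\mR\times M_0,e\oplus g_0)$, with $t$ the coordinate on $\mR$, exactly as in the proof of Theorem~\ref{thm_calderon}. For the limiting Carleman weight $\varphi=t$ there is a transversal solution $\psi=\psi(x)$ of $\abs{\nabla\psi}_{g_0}^2=1$ along a fixed non-tangential geodesic $\gamma$ of $M_0$ (inside a slight enlargement), and the Gaussian beam construction of \cite{DKSU,DKLS} produces, for $s\to\infty$ and a fixed $\lambda\in\mR$, harmonic functions of the form
\[
v_1=e^{s(t+i\psi)}(a_1+r_1),\quad v_2=e^{-s(t+i\psi)}(a_2+r_2),\quad v_3=e^{s(t+i\psi)+i\lambda t}(a_3+r_3),\quad v_4=e^{-s(t+i\psi)}(a_4+r_4),
\]
where each amplitude $a_j$ concentrates as $s\to\infty$ on $\mR_t\times\gamma$ with Gaussian profile of width $\sim s^{-1/2}$ transverse to $\gamma$ in $M_0$, and the remainders $r_j$ are small in suitable norms. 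In the product $v_1v_2v_3v_4$ the exponential weights cancel, leaving $e^{i\lambda t}(a_1a_2a_3a_4+\text{cross terms})$; every cross term contains an $r_j$ (with the exponentials still cancelling) and tends to $0$. Letting $s\to\infty$ in the integral identity then gives
\[
\int_{\mR}\int_0^{L} e^{i\lambda t}\,f(t,\gamma(\sigma))\,d\sigma\,dt = 0
\]
for every $\lambda\in\mR$ and every non-tangential maximal geodesic $\gamma:[0,L]\to M_0$; equivalently, the geodesic X-ray transform on $(M_0,g_0)$ of $x\mapsto\widehat f(\lambda,x):=\int_\mR e^{i\lambda t}f(t,x)\,dt$ vanishes identically. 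Since $(M_0,g_0)$ has injective geodesic X-ray transform (Theorem~\ref{thm_xray} when it is simple, and the hypothesis in general), $\widehat f(\lambda,\cdot)\equiv 0$ for all $\lambda$, so $f\equiv 0$ by Fourier inversion in $t$, i.e.\ $q_1=q_2$.

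The new conceptual input — that a cubic nonlinearity converts equality of the DN maps into the vanishing of $\int_M(q_1-q_2)v_0v_1v_2v_3$ — is obtained simply by differentiating three times, and given this identity the rest follows the template of Theorem~\ref{thm_calderon}. The technical heart, and the place where work beyond the linear case is needed, is the limit above: one must construct the complex geometric optics solutions with remainders controlled in norms strong enough that the \emph{product of four} of them — rather than two, as in Theorem~\ref{thm_calderon} — can be estimated and all cross terms are killed as $s\to\infty$ for \emph{every} $n\ge 3$. This typically requires higher-order ($H^s$, $s>n/2$) Carleman estimates for the conjugated Laplacian, so that $\norm{r_j}_{L^\infty}$ (or in $C^k$) decays fast enough to beat the growth of the concentrating amplitudes $a_j$, together with a careful choice of normalizations so that the main term converges to the X-ray transform. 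Getting this refinement of the construction behind Theorem~\ref{thm_calderon} right is the main obstacle.
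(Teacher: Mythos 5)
Your linearization step is correct and matches the paper: differentiating three times in $\eps$ produces the integral identity $\int_M (q_1-q_2) v_1 v_2 v_3 v_4 \,dV = 0$ for all harmonic $v_j$ (Lemma~\ref{lemma_identity_nonlinear}). But your second step misses the crucial point of Theorem~\ref{thm_calderon_semilinear}. You insert four CGO solutions all concentrating near the \emph{same} set $\mR \times \gamma$, deduce that the geodesic X-ray transform of $\widehat{q_1-q_2}(\lambda,\cdot)$ on $(M_0,g_0)$ vanishes, and then invoke injectivity of that X-ray transform (``the hypothesis in general''). However, Theorem~\ref{thm_calderon_semilinear} has \emph{no} X-ray injectivity hypothesis on $(M_0,g_0)$ — that is precisely what distinguishes it from Theorem~\ref{thm_calderon} and resolves Question~\ref{question_transversal} for the nonlinear equation. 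Your argument only yields the weaker statement with the extra assumption, i.e.\ it re-proves a nonlinear analogue of Theorem~\ref{thm_calderon} rather than Theorem~\ref{thm_calderon_semilinear} as stated.

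The idea you are missing, and the reason the fourfold product is so powerful, is to concentrate the \emph{four solutions along two different geodesics}. Pick maximal geodesics $\gamma_1, \gamma_2$ in $M_0$ that intersect (essentially only) at a point $x_0$. Take $v_\lambda, v_{-\lambda}$ with $v_\lambda \ol{v}_{-\lambda}$ concentrating on $\mR \times \gamma_1$, and $w_\lambda, w_{-\lambda}$ with $w_\lambda \ol{w}_{-\lambda}$ concentrating on $\mR \times \gamma_2$, as in Proposition~\ref{prop_elliptic_concentrating}. The product $v_\lambda \ol{v}_{-\lambda} w_\lambda \ol{w}_{-\lambda}$ then concentrates on the \emph{intersection} $\mR \times \{x_0\}$, and passing to the limit in the integral identity yields
\[
\int_{-\infty}^{\infty} e^{-i\sigma t}\,(q_1-q_2)(t,x_0)\,dt = 0
\]
for all $\sigma$ and all $x_0$. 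This recovers $q_1-q_2$ pointwise in $M_0$ directly, with no X-ray inversion at all. (The case where two geodesics through $x_0$ have further intersections is handled by introducing an additional parameter into the CGOs; see \cite{LLLS}.) Without this two-geodesic trick, your argument cannot reach the theorem as stated.
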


Question \ref{question_partialdata} has also been solved in the nonlinear case independently in \cite{KrupchykUhlmann, LLLS_partial}.

The rest of this section is organized as follows. In Section \ref{boundary_determination} we show that the Taylor series of the conductivity at the boundary is determined by the DN map. Interior determination is discussed in Section \ref{subseq_calderon_interior}, where we prove Theorem \ref{thm_calderon} by using complex geometrical optics solutions that are constructed in Section \ref{sec_cgo}. Finally, the case of nonlinear equations and the proof of Theorem \ref{thm_calderon_semilinear} is considered in Section \ref{subseq_calderon_nonlinear}.

\subsection{Boundary determination -- DN map as a $\Psi$DO} \label{boundary_determination}

We will first prove that the DN map $\Lambda_{\gamma}$ determines a scalar conductivity $\gamma \in C^{\infty}(\ol{\Omega})$ and all of its derivatives on $\p \Omega$. For simplicity we work in a Euclidean domain. The treatment in this section follows \cite{FSU}.

\begin{theorem}[Boundary determination] \label{thm_calderon_boundary_determination}
Let $\gamma_1, \gamma_2 \in C^{\infty}(\ol{\Omega})$ be positive. If 
\[
\Lambda_{\gamma_1} = \Lambda_{\gamma_2},
\]
then the Taylor series of $\gamma_1$ and $\gamma_2$ coincide at any point of $\p \Omega$.
\end{theorem}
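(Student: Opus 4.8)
The plan is to show that $\Lambda_\gamma$ is a classical pseudodifferential operator of order $1$ on $\p\Omega$ and that its full symbol, computed in boundary normal coordinates, determines the Taylor series of $\gamma$ at $\p\Omega$ jet by jet; since $\Lambda_{\gamma_1}=\Lambda_{\gamma_2}$, the two conductivities must then have the same boundary jets. First I would localize near a point $x_0\in\p\Omega$ and choose coordinates $x=(x',x_n)$ so that near $x_0$ the domain is $\{x_n>0\}$, the boundary is $\{x_n=0\}$, and $\p_\nu=-\p_n$ there (the boundary may be curved, so a Jacobian/second fundamental form will appear in the lower-order computations, but this is known geometry). Then, for each $\lambda\geq 1$ and $\xi'\in\mR^{n-1}\setminus\{0\}$, I would build a WKB \emph{approximate solution} of $\nabla\cdot\gamma\nabla u=0$ concentrated in a layer of width $\sim 1/\lambda$ near the boundary,
\[
u_\lambda(x) = e^{i\lambda(x'\cdot\xi' + i x_n|\xi'|)}\Big( b_0(x) + \lambda^{-1} b_1(x) + \dots + \lambda^{-N} b_N(x)\Big)\chi(x),
\]
where $\chi$ is a cutoff near $x_0$ and $b_0(x',0)\equiv 1$. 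Inserting this ansatz into the equation and collecting powers of $\lambda$ produces transport equations (first-order ODEs in $x_n$) for the $b_j$, solvable to any order; the factor $e^{-\lambda x_n|\xi'|}$ makes $\nabla\cdot\gamma\nabla u_\lambda$ small in the relevant norms after integrating in $x_n$, modulo contributions supported where $\chi$ varies, which are negligible near $x_0$. Correcting $u_\lambda$ to an exact solution by solving a Dirichlet problem with data $f_\lambda=u_\lambda|_{x_n=0}$ changes $\gamma\p_\nu u|_{\p\Omega}$ only by a term of lower order in $\lambda$.

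Next I would evaluate $\Lambda_\gamma f_\lambda=\gamma\,\p_\nu u|_{\p\Omega}=-\gamma\,\p_n u|_{x_n=0}$ and expand in powers of $\lambda$: the leading term is $\lambda\,\gamma(x',0)|\xi'|$, and the coefficient of $\lambda^{1-j}$ has the form $c_j(x',\xi')\,\p_n^j\gamma(x',0)$ with $c_j\neq 0$, plus an expression depending on $\gamma$ and its derivatives only through order $j-1$ in the normal direction (and on the known boundary geometry). This is precisely the assertion that $\Lambda_\gamma$ is a classical $\Psi$DO of order $1$ with full symbol $a(x',\xi')\sim a_1+a_0+a_{-1}+\dots$, where $a_1=\gamma|_{\p\Omega}\,|\xi'|$ and each $a_{1-j}$ is determined by the boundary $j$-jet of $\gamma$. (Equivalently one may use the Lee--Uhlmann factorization $\nabla\cdot\gamma\nabla\equiv\gamma^{-1}(\p_n+a(x,D')-\dots)(\p_n+b(x,D'))\gamma$ modulo a smoothing operator, with $\Lambda_\gamma\equiv b(x,D')|_{x_n=0}$ and the symbol of $b$ solving a Riccati equation whose successive homogeneous components involve the boundary jets of $\gamma$; this route trades the quasimode error estimates for symbol-calculus bookkeeping.)

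Finally, since $\Lambda_{\gamma_1}=\Lambda_{\gamma_2}$ their full symbols agree, hence $a_{1-j}$ is the same for $\gamma_1$ and $\gamma_2$ for every $j$. I argue by induction on $j$: the $j=0$ identity gives $\gamma_1=\gamma_2$ on $\p\Omega$, and assuming $\p_n^k\gamma_1=\p_n^k\gamma_2$ on $\p\Omega$ for all $k<j$, the $\lambda^{1-j}$ identity forces $\p_n^j\gamma_1=\p_n^j\gamma_2$ on $\p\Omega$ because the remaining terms already coincide and $c_j\neq 0$. Thus all normal derivatives of $\gamma_1$ and $\gamma_2$ match on $\p\Omega$; combined with all tangential derivatives (determined by the common boundary values) this yields that the Taylor series of $\gamma_1$ and $\gamma_2$ agree at every point of $\p\Omega$. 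The main obstacle is exactly the recursion bookkeeping: one must verify that $\p_n^j\gamma$ enters the $\lambda^{1-j}$ coefficient with a nonvanishing, explicitly computable factor $c_j$ and that every other contribution depends only on strictly lower-order normal jets (plus tangential derivatives and boundary curvature), since this structure is what drives the induction; the error control turning the WKB quasimode into an exact solution is routine but must be carried out to enough orders in $\lambda$ not to contaminate the symbol at the order being read off. Further details of this computation may be found in \cite{FSU}.
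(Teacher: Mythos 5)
Your proposal is correct, but it follows a genuinely different route from the one in the paper. You take the Sylvester--Uhlmann/Lee--Uhlmann route: show that $\Lambda_\gamma$ is a classical elliptic $\Psi$DO of order $1$ on $\p\Omega$, compute its full symbol (either by reading off the $\lambda$-expansion of $-\gamma\,\p_n u_\lambda|_{x_n=0}$ for WKB data, or by the factorization of $\nabla\cdot\gamma\nabla$ into first-order $\Psi$DO factors with a Riccati equation for the symbol), and then induct on the homogeneity degree to extract the normal jets of $\gamma$. The paper explicitly avoids this: after the integral identity $((\Lambda_{\gamma_1}-\Lambda_{\gamma_2})f_1,f_2)_{L^2(\p\Omega)}=\int_\Omega(\gamma_1-\gamma_2)\nabla u_1\cdot\nabla\bar u_2\,dx$ (Lemma \ref{lemma_calderon_integral_identity}), it builds \emph{two} concentrating WKB quasimodes with the same phase and pairs them inside the bilinear form (Proposition \ref{prop_calderon_oscillating_solutions}); the induction is then run on the factor $\mathrm{dist}(x,\p\Omega)^k$ in \eqref{boundary_determination_limit}, not on homogeneous symbol components.

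The two routes buy different things. The paper's bilinear approach never needs to show $\Lambda_\gamma$ is a $\Psi$DO, and -- crucially -- the correction terms $r_j$ only enter via interior $L^2$ pairings, so plain energy estimates $\|r_j\|_{H^1(\Omega)}=O(\lambda^{-N})$ suffice. In your approach you must control the \emph{normal derivative on the boundary} of the correction $r$; the throwaway sentence ``changes $\gamma\p_\nu u|_{\p\Omega}$ only by a term of lower order in $\lambda$'' hides a genuine (solvable, but nontrivial) step: you need an elliptic regularity / trace estimate like $\|\p_\nu r\|_{L^2(\p\Omega)}\lesssim\|r\|_{H^2(\Omega)}\lesssim\|\nabla\cdot\gamma\nabla u_\lambda\|_{L^2(\Omega)}$, and you have to verify that the quasimode error is small in a strong enough norm that this trace estimate does not wash out the $\lambda^{1-j}$ coefficient you are trying to read off at step $j$. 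The factorization variant you mention sidesteps this by staying entirely within the $\Psi$DO calculus, at the cost of the ``symbol bookkeeping'' you correctly flag as the main labor. Either way, you do need to verify the nondegeneracy $c_j\neq0$ (and the fact that $a_{1-j}$ depends only on the $j$-jet plus boundary geometry), which in the paper's scheme is replaced by the explicit limit formula \eqref{boundary_determination_limit} with its nonvanishing constant $c_k=2\int_0^\infty x_n^k e^{-2x_n}\,dx_n$. Both proofs are valid; yours is the classical one from \cite{SylvesterUhlmann_boundary}, the paper's is designed to parallel the wave-equation argument and to avoid $\Psi$DO machinery on the boundary.
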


This result was proved in \cite{KohnVogelius}, and it in particular implies that any real-analytic conductivity is uniquely determined by the DN map. The argument extends to piecewise real-analytic conductivities. A different proof was given in \cite{SylvesterUhlmann_boundary}, based on two facts:

\begin{enumerate}
\item[1.]
The DN map $\Lambda_{\gamma}$ is an elliptic $\Psi$DO of order $1$ on $\p \Omega$.
\item[2.] 
The Taylor series of $\gamma$ at a boundary point can be read off from the symbol of $\Lambda_{\gamma}$ computed in suitable coordinates. The symbol of $\Lambda_{\gamma}$ can be computed by testing against highly oscillatory boundary data (compare with \eqref{principal_symbol_oscillatory}).
\end{enumerate}

\begin{remark}
The above argument is based on studying the singularities of the integral kernel of the DN map, and it only determines the Taylor series of the conductivity at the boundary. The values of the conductivity in the interior are encoded in the $C^{\infty}$ part of the kernel, and different methods (based on \emph{complex geometrical optics solutions}) are required for interior determination as discussed in Section \ref{subseq_calderon_interior}.
\end{remark}

Let us start with a simple example.

\begin{example}[DN map in half space is a $\Psi$DO]
Let $\Omega = \mR^n_+ = \{ x_n > 0 \}$, so $\p \Omega = \mR^{n-1} = \{ x_n = 0 \}$. We wish to compute the DN map for the Laplace equation (i.e.\ $\gamma \equiv 1$) in $\Omega$. Consider 
\[
\left\{ \begin{array}{rll}
\Delta u &\!\!\!= 0 & \quad \text{in } \mR^n_+, \\
u &\!\!\!= f & \quad \text{on } \{ x_n = 0 \}.
\end{array} \right.
\]
Writing $x = (x',x_n)$ and taking Fourier transforms in $x'$ gives 
\[
\left\{ \begin{array}{rll}
(\p_n^2 - \abs{\xi'}^2) \hat{u}(\xi',x_n) &\!\!\!= 0 & \quad \text{in } \mR^n_+, \\
\hat{u}(\xi',0) &\!\!\!= \hat{f}(\xi'). &
\end{array} \right.
\]
Solving this ODE for fixed $\xi'$ and choosing the solution that decays for $x_n > 0$ gives 
\begin{align*}
 &\hat{u}(\xi',x_n) = e^{-x_n \abs{\xi'}} \hat{f}(\xi') \\
 &\implies u(x',x_n) = \mF_{\xi'}^{-1} \left\{ e^{-x_n \abs{\xi'}} \hat{f}(\xi') \right\}.
\end{align*}
We may now compute the DN map:
\[
\Lambda_1 f = -\p_n u|_{x_n = 0} =  \mF_{\xi'}^{-1} \left\{ \abs{\xi'} \hat{f}(\xi') \right\}.
\]
Thus the DN map on the boundary $\p \Omega = \mR^{n-1}$ is just $\Lambda_1 = \abs{D_{x'}}$ corresponding to the Fourier multiplier $\abs{\xi'}$. This shows that at least in this simple case, the DN map is an elliptic $\Psi$DO of order $1$.
\end{example}

We will now prove Theorem \ref{thm_calderon_boundary_determination} by an argument that avoids showing that the DN map is a $\Psi$DO, but is rather based on directly testing the DN map against oscillatory boundary data. The first step is a basic integral identity (sometimes called Alessandrini identity) for the DN map.

\begin{lemma}[Integral identity] \label{lemma_calderon_integral_identity}
Let $\gamma_1, \gamma_2 \in C^{\infty}(\ol{\Omega})$. If $f_1, f_2 \in C^{\infty}(\p \Omega)$, then 
\[
((\Lambda_{\gamma_1} - \Lambda_{\gamma_2}) f_1, f_2)_{L^2(\p \Omega)} = \int_{\Omega} (\gamma_1 - \gamma_2) \nabla u_1 \cdot \nabla \bar{u}_2 \,dx
\]
where $u_j \in C^{\infty}(\ol{\Omega})$ solves $\mdiv(\gamma_j \nabla u_j) = 0$ in $\Omega$ with $u_j|_{\p \Omega} = f_j$.
\end{lemma}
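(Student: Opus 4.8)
The plan is to derive the identity by a straightforward double application of the divergence theorem (Green's second identity), exploiting the symmetry of the bilinear form $(\gamma_j \nabla u_j, \nabla v)$. This is the elliptic analogue of Lemma~\ref{lemma_gelfand_integral_identity}, and the argument is essentially the same but cleaner because there is no time variable.

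First I would recall that, by definition of the DN map, $(\Lambda_{\gamma_1} f_1, f_2)_{L^2(\p\Omega)} = \int_{\p\Omega} (\gamma_1 \p_\nu u_1)\, \bar f_2 \, dS$, and then integrate by parts once. Since $\bar f_2 = \bar u_2|_{\p\Omega}$ and $\mdiv(\gamma_1 \nabla u_1) = 0$, Green's formula gives
\[
(\Lambda_{\gamma_1} f_1, f_2)_{L^2(\p\Omega)} = \int_\Omega \gamma_1 \nabla u_1 \cdot \nabla \bar u_2 \, dx .
\]
By exactly the same computation applied to $\gamma_2$, using that $\mdiv(\gamma_2 \nabla u_2) = 0$ and that the roles of $u_1$ and $u_2$ can be swapped because the integrand $\gamma_2 \nabla u_1 \cdot \nabla \bar u_2$ is symmetric in its two slots (one integrates $\gamma_2 \nabla \bar u_2$ against $\nabla u_1$, moves the divergence onto $\bar u_2$, and picks up the boundary term $\int_{\p\Omega} u_1 \, \gamma_2 \p_\nu \bar u_2 \, dS = \overline{(\Lambda_{\gamma_2} f_2, f_1)}_{L^2(\p\Omega)}$), one gets
\[
(\Lambda_{\gamma_2} f_1, f_2)_{L^2(\p\Omega)} = \int_\Omega \gamma_2 \nabla u_1 \cdot \nabla \bar u_2 \, dx ,
\]
where I also use that $\Lambda_{\gamma_2}$ is self-adjoint (its kernel is real and symmetric, or equivalently the quadratic form is symmetric) so that $\overline{(\Lambda_{\gamma_2} f_2, f_1)} = (\Lambda_{\gamma_2} f_1, f_2)$. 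Subtracting the two identities yields the claim.

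The only mild subtlety — the main thing to be careful about rather than a genuine obstacle — is the self-adjointness step: one must check that $(\Lambda_{\gamma_2} f_1, f_2)_{L^2(\p\Omega)} = (f_1, \Lambda_{\gamma_2} f_2)_{L^2(\p\Omega)}$, which again follows from Green's identity applied to the two solutions with data $f_1$ and $f_2$ for the \emph{same} conductivity $\gamma_2$. Equivalently, one can avoid invoking self-adjointness altogether by writing the first identity for $\gamma_2$ directly as $(\Lambda_{\gamma_2} f_1, f_2)_{L^2(\p\Omega)} = \int_\Omega \gamma_2 \nabla u_1' \cdot \nabla \bar u_2 \, dx$ where $u_1'$ solves the $\gamma_2$-equation with data $f_1$, and then noting that the right-hand side equals $\int_\Omega \gamma_2 \nabla u_1 \cdot \nabla \bar u_2\,dx$ because $\mdiv(\gamma_2 \nabla \bar u_2)=0$ forces $\int_\Omega \gamma_2 \nabla(u_1-u_1')\cdot\nabla\bar u_2\,dx = \int_{\p\Omega}(u_1-u_1')\gamma_2\p_\nu\bar u_2\,dS = 0$ since $u_1 - u_1' = 0$ on $\p\Omega$. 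Smoothness of all functions up to the boundary (hence validity of the divergence theorem) is guaranteed by elliptic regularity, as already noted in the text. No step here is hard; the content is entirely the bookkeeping of which function solves which equation.
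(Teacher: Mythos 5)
Your proof is correct and follows essentially the same route as the paper: establish self-adjointness of $\Lambda_{\gamma}$ via Green's identity, compute $(\Lambda_{\gamma_1} f_1, f_2)$ and $(\Lambda_{\gamma_2} f_1, f_2) = (f_1, \Lambda_{\gamma_2} f_2)$ by a single integration by parts each, and subtract. The alternative you sketch at the end (comparing the $\gamma_2$-solution $u_1'$ with data $f_1$ against $u_1$ and showing the difference contributes nothing) is a harmless repackaging of the same Green's-identity computation, not a genuinely different argument.
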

\begin{proof}
We first observe that the DN map is symmetric: if $\gamma \in C^{\infty}(\ol{\Omega})$ is positive and if $u_f$ solves $\nabla \cdot (\gamma \nabla u_f) = 0$ in $\Omega$ with $u_f|_{\p \Omega} = f$, then an integration by parts shows that 
\begin{align*}
(\Lambda_{\gamma} f, g)_{L^2(\p \Omega)} &= \int_{\p \Omega} (\gamma \p_{\nu} u_f) \ol{u}_g \,dS = \int_{\Omega} \gamma \nabla u_f \cdot \nabla \ol{u}_g \,dx \\
 &= \int_{\p \Omega} u_f (\ol{\gamma \p_{\nu} u_g}) \,dS = (f, \Lambda_{\gamma} g)_{L^2(\p \Omega)}.
\end{align*}
Thus 
\begin{align*}
(\Lambda_{\gamma_1} f_1, f_2)_{L^2(\p \Omega)} &= \int_{\Omega} \gamma_1 \nabla u_1 \cdot \nabla \ol{u}_2 \,dx, \\
(\Lambda_{\gamma_2} f_1, f_2)_{L^2(\p \Omega)} &= (f_1, \Lambda_{\gamma_2} f_2)_{L^2(\p \Omega)} = \int_{\Omega} \gamma_2 \nabla u_1 \cdot \nabla \ol{u}_2 \,dx.
\end{align*}
The result follows by subtracting the above two identities.
\end{proof}

Next we show that if $x_0$ is a boundary point, there is an approximate solution of the conductivity equation that concentrates near $x_0$, has highly oscillatory boundary data, and decays exponentially in the interior. As a simple example, the solution of 
\[
\left\{ \begin{array}{rll}
\Delta u &\!\!\!= 0 & \quad \text{in } \mR^n_+, \\
u(x',0) &\!\!\!= e^{i\lambda x' \cdot \xi'} &
\end{array} \right.
\]
that decays for $x_n > 0$ is given by $u = e^{-\lambda x_n} e^{i\lambda x' \cdot \xi'}$, which concentrates near $\{ x_n = 0 \}$ and decays exponentially when $x_n > 0$ if $\lambda$ is large. Roughly, this means that the solution of a Laplace type equation with highly oscillatory boundary data concentrates near the boundary. Note also that in a region like $\{ x_n > \abs{x'}^2 \}$, the function $u$ is harmonic and concentrates near the origin.

The following proposition makes these statements precise. Notice the similarity with Proposition \ref{prop_gelfand_concentrating_solutions} concerning solutions for wave equations that focus near geodesics.

\begin{proposition} \label{prop_calderon_oscillating_solutions}
(Concentrating approximate solutions) Let $\gamma \in C^{\infty}(\ol{\Omega})$ be positive, let $x_0 \in \p \Omega$, let $\xi_0$ be a unit tangent vector to $\p \Omega$ at $x_0$, and let $\chi \in C^{\infty}_c(\p \Omega)$ be supported near $x_0$. Let also $N \geq 1$. For any $\lambda \geq 1$ there exists $v = v_{\lambda} \in C^{\infty}(\ol{\Omega})$ having the form 
\[
v = \lambda^{-1/2} e^{i\lambda \Phi} a
\]
such that 
\begin{gather*}
\nabla \Phi(x_0) = \xi_0 -i \nu(x_0), \\
\text{$a$ is supported near $x_0$ with $a|_{\p \Omega} = \chi$},
\end{gather*}
and as $\lambda \to \infty$ 
\[
\norm{v}_{H^1(\Omega)} \sim 1, \qquad \norm{\mdiv(\gamma \nabla v)}_{L^2(\Omega)} = O(\lambda^{-N}).
\]
Moreover, if $\tilde{\gamma} \in C^{\infty}(\ol{\Omega})$ is positive and $\tilde{v} = \tilde{v}_{\lambda}$ is the corresponding approximate solution constructed for $\tilde{\gamma}$, then for any $f \in C(\ol{\Omega})$ and $k \geq 0$ one has 
%$\Phi$ and each $a_{-j} \in C^{\infty}(\ol{\Omega})$ are independent of $\gamma$ and $\lambda$, and for any $f \in C(\ol{\Omega})$ supported near $x_0$ one has 
\begin{equation} \label{boundary_determination_limit}
\lim_{\lambda \to \infty} \lambda^k \int_{\Omega} \mathrm{dist}(x,\p \Omega)^k f \nabla v \cdot \ol{\nabla \tilde{v}} \,dx = c_k \int_{\p \Omega} f \abs{\chi}^2 \,dS.
\end{equation}
for some $c_k \neq 0$.
\end{proposition}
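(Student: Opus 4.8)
The plan is to build $v$ by a boundary geometric-optics (WKB) ansatz attached to $x_0$, using a \emph{complex} phase whose imaginary part grows linearly into $\Omega$; this imaginary part is what replaces the ``choose the solution of the $x_n$-ODE that decays'' step from the half-space example. First I would fix boundary normal coordinates $(x',x_n)$ in a neighborhood of $x_0$ in $\ol\Omega$, with $x_n = \mathrm{dist}(\,\cdot\,,\p\Omega)\geq 0$, $x_0\leftrightarrow 0$, $\xi_0\leftrightarrow\p_{x_1}$, in which the Euclidean metric takes the form $e = dx_n^2\oplus e'(x',x_n)$, $\p_{x_n}$ is the inward unit normal, and $\mdiv(\gamma\nabla\,\cdot\,) = \mdiv_e(\gamma\nabla_e\,\cdot\,)$. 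A direct conjugation gives, with $\mathcal{T}a := 2\gamma\langle\nabla_e\Phi,\nabla_e a\rangle_e + \mdiv_e(\gamma\nabla_e\Phi)\,a$,
\[
e^{-i\lambda\Phi}\mdiv_e\big(\gamma\nabla_e(e^{i\lambda\Phi}a)\big) = -\lambda^2\gamma\abs{\nabla_e\Phi}_e^2\,a + i\lambda\,\mathcal{T}a + \mdiv_e(\gamma\nabla_e a),
\]
so the task is to choose $\Phi$ killing the $\lambda^2$ term to high order near $x_0$ and $a$ killing the $\lambda^1,\lambda^0,\dots$ terms to high order.

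For the phase I would take $\Phi|_{x_n=0} = \rho(x')$ to be a real solution near $x_0$ of the \emph{tangential} eikonal equation $\abs{\nabla'\rho}_{e'}^2 = 1$ with $\nabla'\rho(0) = e_1$ (for instance a signed distance function within $\p\Omega$), and then determine the Taylor expansion $\Phi = \sum_{j\leq M}\phi_j(x')x_n^j$ recursively so that $\p_{x_n}^j(\abs{\nabla_e\Phi}_e^2)|_{x_n=0} = 0$ for $0\leq j\leq M$; the order-zero condition forces $\phi_1^2 = -1$, and choosing $\phi_1 = i$ gives $\mathrm{Im}\,\Phi = x_n + O(x_n^2)$, hence $\mathrm{Im}\,\Phi\geq c\,x_n$ after shrinking the neighborhood. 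One checks $\nabla\Phi(x_0) = \xi_0 - i\nu(x_0)$, and crucially $\Phi$ does not involve $\gamma$, so the same phase will serve for $\tilde\gamma$. For the amplitude I would write $a = a_0 + \lambda^{-1}a_1 + \dots + \lambda^{-K}a_K$ and solve $\mathcal{T}a_0 = 0$, $\mathcal{T}a_j = i\,\mdiv_e(\gamma\nabla_e a_{j-1})$ to order $M$ on $\{x_n=0\}$; since the $\p_{x_n}$-coefficient of $\mathcal{T}$ is $2\gamma\,\p_{x_n}\Phi = 2i\gamma(x_0) + O(x_n)\neq 0$, each is a linear ODE in $x_n$ with $x'$ as parameter, integrated from $a_0|_{x_n=0} = \chi$, $a_j|_{x_n=0} = 0$ for $j\geq1$. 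Finally one multiplies $a$ by a smooth cutoff equal to $1$ near $x_0$ and supported in the coordinate patch; the commutator errors it produces are supported in $\{x_n\gtrsim\delta\}$ and hence are $O(\lambda^{-\infty})$ because $\abs{e^{i\lambda\Phi}}\leq e^{-c\lambda x_n}$.

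Set $v := \lambda^{-1/2}e^{i\lambda\Phi}a$, extended by zero. By construction $\mdiv(\gamma\nabla v)$ equals $\lambda^{-1/2}e^{i\lambda\Phi}$ times a term $O(x_n^{M+1})$ (the eikonal residual) plus a term $\lambda^{-K}O(1)$, and using $\abs{e^{i\lambda\Phi}}\leq e^{-c\lambda x_n}$ together with $\int_0^\infty x_n^{2M+2}e^{-2c\lambda x_n}\,dx_n\lesssim\lambda^{-2M-3}$ one gets $\norm{\mdiv(\gamma\nabla v)}_{L^2(\Omega)}\lesssim\lambda^{-M} + \lambda^{-1-K}$, which is $O(\lambda^{-N})$ once $M,K\geq N$. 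The same weighted estimates applied to $\nabla_e v = \lambda^{-1/2}e^{i\lambda\Phi}(i\lambda\,\nabla_e\Phi\,a + \nabla_e a)$ give $\norm{v}_{L^2} = O(\lambda^{-1})$, and $\norm{\nabla v}_{L^2(\Omega)}^2\to c_0\norm{\chi}_{L^2(\p\Omega)}^2$ is the $f\equiv1$, $\tilde\gamma=\gamma$, $k=0$ instance of the identity below, so $\norm{v}_{H^1(\Omega)}\sim1$. For the identity itself I build $\tilde v$ for $\tilde\gamma$ with the \emph{same} $\Phi$ and the same $\chi$; in boundary normal coordinates,
\[
\lambda^k\!\int_\Omega x_n^k\, f\,\nabla v\cdot\ol{\nabla\tilde v}\,dx = \lambda^{k+1}\!\int x_n^k\, f\, e^{i\lambda(\Phi-\ol\Phi)}\langle\nabla_e\Phi,\ol{\nabla_e\Phi}\rangle_e\, a\,\ol{\tilde a}\,\sqrt{\det e}\;dx'\,dx_n + O(\lambda^{-1}),
\]
where the error comes from the $\nabla_e a$, $\nabla_e\tilde a$ pieces of the gradients. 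Substituting $x_n = s/\lambda$, using $\mathrm{Im}\,\Phi = x_n + O(x_n^2)$, $\langle\nabla_e\Phi,\ol{\nabla_e\Phi}\rangle_e|_{x_n=0} = \abs{\p_{x_n}\Phi}^2 + \abs{\nabla'\rho}_{e'}^2 = 2$, $a\ol{\tilde a}|_{x_n=0} = \abs{\chi}^2$, $\sqrt{\det e}\,dx'|_{x_n=0} = dS$, and dominated convergence (the rescaled integrand is bounded by $C s^k e^{-cs}$ times an $L^1(\p\Omega)$ function), the limit is $2\big(\int_0^\infty s^k e^{-2s}\,ds\big)\int_{\p\Omega}f\abs{\chi}^2\,dS = \tfrac{k!}{2^k}\int_{\p\Omega}f\abs{\chi}^2\,dS$, so $c_k = k!/2^k\neq 0$.

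I expect the only genuinely delicate part to be the error bookkeeping: because $v$ lives at the scale $x_n\sim\lambda^{-1}$, each power of $x_n$ only buys a factor $\lambda^{-1}$ in a weighted $L^2$-norm while each derivative costs $\lambda^{1/2}$, so one must solve the eikonal and transport equations to order $\sim N$ on $\{x_n=0\}$, keep careful track of the cutoff errors, and check that the subprincipal pieces of the gradients really drop out of the limit. The one structural point, rather than computation, is the choice of the branch $\phi_1 = i$ giving $\mathrm{Im}\,\Phi > 0$ in $\Omega$; with that in hand everything else is the routine WKB scheme, parallel to Proposition~\ref{prop_gelfand_concentrating_solutions}.
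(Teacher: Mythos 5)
Your proof is correct and follows essentially the same approach as the paper's: a complex-phase WKB construction in boundary normal coordinates, with the eikonal and transport equations solved to high finite order on $\{x_n=0\}$, the branch $\p_n\Phi|_{x_n=0}=+i$ chosen so that $\mathrm{Im}\,\Phi\gtrsim x_n$, and the limit \eqref{boundary_determination_limit} obtained by rescaling $x_n\mapsto x_n/\lambda$ and dominated convergence with $c_k=k!/2^k$. The only cosmetic differences are that you truncate the Taylor expansions at finite order $M$ instead of invoking Borel summation, and you prescribe $\Phi|_{x_n=0}$ as a real solution of the tangential eikonal equation rather than the linear function $x'\cdot\xi_0'$ used in the paper's flat model case—your choice is in fact the cleaner one in the curved setting, since it makes both $\langle\nabla_e\Phi,\overline{\nabla_e\Phi}\rangle_e|_{x_n=0}$ and $\p_n(\mathrm{Im}\,\Phi)|_{x_n=0}$ identically constant, so that $c_k$ comes out independent of $x'$ without further normalization.
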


We can now give the proof of the boundary determination result.

\begin{proof}[Proof of Theorem \ref{thm_calderon_boundary_determination}]
Using the assumption that $\Lambda_{\gamma_1} = \Lambda_{\gamma_2}$ together with the integral identity in Lemma \ref{lemma_calderon_integral_identity}, we have that 
\begin{equation} \label{calderon_integral_identity_vanishing}
\int_{\Omega} (\gamma_1 - \gamma_2) \nabla u_1 \cdot \nabla \bar{u}_2 \,dx = 0
\end{equation}
whenever $u_j$ solves $\mdiv(\gamma_j \nabla u_j) = 0$ in $\Omega$.

Let $x_0 \in \p \Omega$, let $\xi_0$ be a unit tangent vector to $\p \Omega$ at $x_0$, and choose $\chi \in C^{\infty}_c(\p \Omega)$ supported near $x_0$. We use Proposition \ref{prop_calderon_oscillating_solutions} to construct functions 
\[
v_j = v_{j,\lambda} = \lambda^{-1/2} e^{i\lambda \Phi} a_j
\]
so that $\nabla \Phi(x_0) = \xi_0 -i \nu(x_0)$, $a_j|_{\p \Omega} = \chi$ and 
\begin{equation} \label{vj_estimates}
\norm{v_j}_{H^1(\Omega)} \sim 1, \qquad \norm{\mdiv(\gamma_j \nabla v_j)}_{L^2(\Omega)} = O(\lambda^{-N}).
\end{equation}
We obtain exact solutions $u_j$ of $\mdiv(\gamma_j \nabla u_j) = 0$ by setting 
\[
u_j := v_j + r_j,
\]
where the correction terms $r_j$ are the unique solutions of 
\[
\mdiv(\gamma_j \nabla r_j) = -\mdiv(\gamma_j \nabla v_j) \text{ in $\Omega$}, \qquad r_j|_{\p \Omega} = 0.
\]
By standard energy estimates \cite[Section 6.2]{Evans} and by \eqref{vj_estimates}, the solutions $r_j$ satisfy 
\begin{equation} \label{calderon_correction_term_estimate}
\norm{r_j}_{H^1(\Omega)} \lesssim \norm{\mdiv(\gamma_j \nabla v_j)}_{H^{-1}(\Omega)} = O(\lambda^{-N}).
\end{equation}

We now insert the solutions $u_j = v_j + r_j$ into \eqref{calderon_integral_identity_vanishing}. Using \eqref{calderon_correction_term_estimate} and \eqref{vj_estimates}, it follows that 
\begin{equation} \label{gamma_difference_approximate}
\int_{\Omega} (\gamma_1 - \gamma_2) \nabla v_1 \cdot \nabla \bar{v}_2 \,dx = O(\lambda^{-N})
\end{equation}
as $\lambda \to \infty$. Letting $\lambda \to \infty$, the formula \eqref{boundary_determination_limit} yields 
\[
\int_{\p \Omega} (\gamma_1 - \gamma_2) \abs{\chi}^2 \,dS = 0.
\]
By varying $\chi$ we obtain $\gamma_1(x_0) = \gamma_2(x_0)$.

We will prove by induction that 
\begin{equation} \label{pnuj_induction_claim}
\p_{\nu}^j \gamma_1|_{\p \Omega} = \p_{\nu}^j \gamma_2|_{\p \Omega} \text{ near $x_0$ for any $j \geq 0$.}
\end{equation}
The case $j=0$ was proved above (here we may vary $x_0$ slightly). We make the induction hypothesis that \eqref{pnuj_induction_claim} holds for $j \leq k-1$. Let $(x', x_n)$ be boundary normal coordinates so that $x_0$ corresponds to $0$, and $\p \Omega$ near $x_0$ corresponds to $\{ x_n = 0 \}$. The induction hypothesis states that 
\[
\p_n^j \gamma_1(x',0) = \p_n^j \gamma_2(x',0), \qquad j \leq k-1.
\]
Considering the Taylor expansion of $(\gamma_1-\gamma_2)(x',x_n)$ with respect to $x_n$ gives that 
\[
(\gamma_1-\gamma_2)(x',x_n) = x_n^k f(x',x_n) \text{ near $0$ in $\{ x_n \geq 0 \}$},
\]
for some smooth function $f$ with $f(x',0) = \frac{\p_n^k (\gamma_1-\gamma_2)(x',0)}{k!}$. Inserting this formula in \eqref{gamma_difference_approximate}, we obtain that 
\[
\lambda^k \int_{\Omega} x_n^k f \nabla v_1 \cdot \nabla \bar{v}_2 \,dx = O(\lambda^{k-N}).
\]
Now $x_n = \mathrm{dist}(x,\p \Omega)$ in boundary normal coordinates. Assuming that $N$ was chosen larger than $k$, we may take the limit as $\lambda \to \infty$ and use \eqref{boundary_determination_limit} to obtain that 
\[
\int_{\p \Omega} f(x',0) \abs{\chi(x',0)}^2 \,dS(x') = 0.
\]
By varying $\chi$ we see that $\p_n^k (\gamma_1-\gamma_2)(x',0) = 0$ for $x'$ near $0$, which concludes the induction.
\end{proof}

It remains to prove Proposition \ref{prop_calderon_oscillating_solutions}, which constructs approximate solutions (also called \emph{quasimodes}) concentrating near a boundary point. This is a typical geometrical optics / WKB type construction for quasimodes with complex phase. The proof is elementary, although a bit long. The argument is simplified slightly by using the Borel summation lemma, which is used frequently in microlocal analysis in various different forms.

\begin{lemma}[Borel summation, {{\cite[Theorem 1.2.6]{Hormander}}}] \label{lemma_borel_summation}
Let $f_j \in C^{\infty}_c(\mR^{n-1})$ for $j = 0, 1, 2, \ldots$. There exists $f \in C^{\infty}_c(\mR^n)$ such that 
\[
\p_n^j f(x',0) = f_j(x'), \qquad j=0,1,2,\ldots.
\]
\end{lemma}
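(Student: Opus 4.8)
The plan is to carry out the classical Borel summation construction. Fix once and for all a cutoff $\chi \in C^{\infty}_c(\mR)$ with $\chi \equiv 1$ on $[-1,1]$ and $\supp(\chi) \subset [-2,2]$. I would first record the preliminary observation that the conclusion $f \in C^{\infty}_c(\mR^n)$ can only hold if the supports $\supp(f_j)$ lie in a common compact set: if they do not, then a point $(x'_j,0)$ with $f_j(x'_j)\neq 0$ and $x'_j \to \infty$ would force $\p_n^j f(x'_j,0)\neq 0$ arbitrarily far out, contradicting compact support. In the intended application all $f_j$ are built from a single cutoff supported near $x_0$, so we may and do assume $K := \overline{\bigcup_j \supp(f_j)} \subset \mR^{n-1}$ is compact. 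I would then set
\[
f(x',x_n) := \sum_{j=0}^{\infty} \frac{f_j(x')}{j!}\, x_n^j\, \chi(x_n/\eps_j), \qquad g_j := \frac{f_j(x')}{j!}\, x_n^j\, \chi(x_n/\eps_j),
\]
for a sequence $\eps_j \in (0,1]$ decreasing to $0$ that will be chosen below. Each $g_j$ is smooth and supported in $K \times [-2\eps_j,2\eps_j] \subset K \times [-2,2]$, so any locally uniform limit of partial sums is supported in the fixed compact set $K \times [-2,2]$.

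The prescribed Taylor coefficients come out correctly regardless of the choice of $\eps_j$: near $\{x_n=0\}$ one has $\chi(x_n/\eps_j)\equiv 1$, so $g_j = \frac{f_j(x')}{j!} x_n^j$ there, whence $\p_n^k g_j(x',0) = \delta_{jk} f_k(x')$; once term-by-term differentiation is licensed this gives $\p_n^k f(x',0)=f_k(x')$ for every $k$. To license it, I would prove the elementary bound that for every multi-index $\alpha$ in the $n$ variables with $|\alpha|\le j-1$ one has $\abs{\p^{\alpha} g_j} \le M_j\,\eps_j$ on $\mR^n$, where $M_j$ depends only on $j$, $\norm{\chi}_{C^{j-1}}$ and $\norm{f_j}_{C^{j-1}(\mR^{n-1})}$. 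Indeed, by the Leibniz rule $\p_{x_n}^{\alpha_n}\big(x_n^j\chi(x_n/\eps_j)\big)$ is a sum of terms $\binom{\alpha_n}{i}\frac{j!}{(j-i)!}\,x_n^{j-i}\,\eps_j^{-(\alpha_n-i)}\,\chi^{(\alpha_n-i)}(x_n/\eps_j)$ with $0\le i\le\alpha_n$; dividing by $j!$ kills the large factorial, on the support one has $\abs{x_n^{j-i}\eps_j^{-(\alpha_n-i)}}\le 2^{j-i}\eps_j^{\,j-\alpha_n}\le 2^{j}\eps_j$ because $j-\alpha_n\ge 1$ and $\eps_j\le 1$, and the $x'$-derivatives contribute only $\norm{f_j}_{C^{j-1}}$. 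Then I would choose $\eps_j \le \min\{1,\ 2^{-j}/(1+M_j)\}$, so that for each fixed $\alpha$ the tail $\sum_{j>|\alpha|}\p^{\alpha}g_j$ is dominated by $\sum_{j>|\alpha|}2^{-j}<\infty$ uniformly; hence $\sum_j g_j$ converges in $C^{\infty}$ to some $f\in C^{\infty}_c(\mR^n)$ and differentiation under the sum is valid.

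There is no genuine obstacle here: this is the standard Borel lemma (cf.\ \cite[Theorem 1.2.6]{Hormander}). The only points requiring care are the bookkeeping in the estimate above — specifically keeping the exponent $j-\alpha_n \ge 1$ so that each high-index term is genuinely $O(\eps_j)$ while the rapid decay of $\eps_j$ absorbs the $j$-dependent but $\alpha$-independent constants $M_j$ — and the preliminary remark that a common compact support of the $f_j$ is exactly what upgrades the $C^{\infty}$ conclusion of Hörmander's statement to the $C^{\infty}_c$ conclusion needed here.
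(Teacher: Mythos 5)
Your proof is correct and is the classical Borel summation construction; the paper itself gives no proof of the lemma, only the citation to H\"ormander, whose Theorem 1.2.6 uses essentially the same scaled-cutoff argument. Your preliminary remark is a genuine catch worth recording: as literally written the lemma cannot yield $f \in C^{\infty}_c(\mR^n)$ unless the $\supp(f_j)$ lie in a single compact set (H\"ormander's version only gives $f \in C^{\infty}$), and this extra hypothesis does hold in the paper's application since each $f_j = \p_n^j b_0(\cdot,0)$ is built from the single cutoff $\chi$ near $x_0$; your derivative bound $\abs{\p^\alpha g_j} \leq M_j \eps_j$ for $\abs{\alpha} \leq j-1$ and the subsequent choice $\eps_j \leq 2^{-j}/(1+M_j)$ are the right bookkeeping to make the tail summable uniformly for each fixed $\alpha$.
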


\begin{proof}[Proof of Proposition \ref{prop_calderon_oscillating_solutions}]
We will first carry out the proof in the case where $x_0 = 0$ and $\p \Omega$ is flat near $0$, i.e.\ $\Omega \cap B(0,r) = \{ x_n > 0 \} \cap B(0,r)$ for some $r > 0$ (the general case will be considered in the end of the proof). We also assume $\xi_0 = (\xi_0',0)$ where $\abs{\xi_0'} = 1$.

We look for $v$ in the form 
\[
v = e^{i\lambda \Phi} b.
\]
Write $Pu = D \cdot (\gamma D u) = \gamma D^2 u + D\gamma \cdot Du$ where $D = \frac{1}{i} \nabla$. The principal symbol of $P$ is 
\begin{equation} \label{ptwo_formula_simple}
p_2(x,\xi) := \gamma(x) \xi \cdot \xi.
\end{equation}
Since $e^{-i\lambda \Phi}D_j(e^{i\lambda \Phi} b) = (D_j + \lambda \p_j \Phi)b$, we compute 
\begin{align}
P(e^{i\lambda \Phi} b) &= e^{i\lambda \Phi}(D + \lambda \nabla \Phi) \cdot (\gamma (D + \lambda \nabla \Phi) b) \notag \\
 &= e^{i\lambda \Phi}\left[ \lambda^2 p_2(x,\nabla \Phi) b + \lambda \frac{1}{i} \left[ \underbrace{2 \gamma \nabla \Phi \cdot \nabla b + \nabla \cdot (\gamma \nabla \Phi) b}_{=: Lb} \right] + Pb \right]. \label{p_eilambdaphi_formula}
\end{align}

We want to choose $\Phi$ and $b$ so that $P(e^{i\lambda \Phi} b) = O_{L^2(\Omega)}(\lambda^{-N})$. Looking at the $\lambda^2$ term in \eqref{p_eilambdaphi_formula}, we first wish to choose $\Phi$ so that 
\begin{equation} \label{ptwo_phi_equation}
p_2(x,\nabla \Phi) = 0 \text{ in $\Omega$}.
\end{equation}
We additionally want that $\Phi(x',0) = x' \cdot \xi_0'$ and $\p_n \Phi(x',0) = i$ (this will imply that $\nabla \Phi(0) = \xi_0 + i e_n$). In fact, using \eqref{ptwo_formula_simple} we can just choose 
\[
\Phi(x',x_n) := x' \cdot \xi_0' + i x_n.
\]
Then $p_2(x,\nabla \Phi) = \gamma(\xi_0+ie_n) \cdot (\xi_0 + ie_n) \equiv 0$ in $\Omega$.

We next look for $b$ in the form 
\[
b = \sum_{j=0}^N \lambda^{-j} b_{-j}.
\]
Since $p_2(x,\nabla \Phi) \equiv 0$, \eqref{p_eilambdaphi_formula} implies that 
\begin{align}
P(e^{i\lambda \Phi} b) &= e^{i\lambda \Phi} \Big[ \lambda [ \frac{1}{i} Lb_0 ] + [ \frac{1}{i} Lb_{-1} + Pb_0] + \lambda^{-1} [ \frac{1}{i} Lb_{-2} + Pb_{-1}] + \ldots \notag \\
 &\hspace{30pt} + \lambda^{-(N-1)} [ \frac{1}{i} Lb_{-N} + Pb_{-(N-1)}] + \lambda^{-N} Pb_{-N }\Big] \label{p_eilambdaphi_formulatwo}.
\end{align}
We will choose the functions $b_{-j}$ so that 
\begin{align} \label{lbone} 
\left\{ \begin{array}{rl}
Lb_0 &\!\!\!= 0 \text{ to infinite order at $\{ x_n = 0 \}$}, \\[5pt]
\frac{1}{i} Lb_{-1} + P b_0 &\!\!\!= 0 \text{ to infinite order at $\{ x_n = 0 \}$}, \\[5pt]
&\!\vdots \\[5pt]
\frac{1}{i} Lb_{-N} + P b_{-(N-1)} &\!\!\!= 0 \text{ to infinite order at $\{ x_n = 0 \}$}. 
\end{array} \right.
\end{align}
We will additionally arrange that 
\begin{align} \label{bzeroone}
\left\{ \begin{array}{rl}
b_0(x',0) &\!\!\!= \chi(x'), \\[5pt]
b_{-j}(x',0) &\!\!\!= 0 \text{ for $1 \leq j \leq N$},
\end{array} \right.
\end{align}
and that each $b_{-j}$ is compactly supported so that 
\begin{equation} \label{bsupport}
\mathrm{supp}(b_{-j}) \subset Q_{\eps} := \{ \abs{x'} < \eps, \ 0 \leq x_n < \eps \}
\end{equation}
for some fixed $\eps > 0$.

To find $b_0$, we prescribe $b_0(x',0), \p_n b_0(x',0), \p_n^2 b_0(x',0)$, $\ldots$ successively and use the Borel summation lemma to construct $b_0$ with this Taylor series at $\{ x_n = 0 \}$. We first set $b_0(x',0) = \chi(x')$. Writing $\eta := \nabla \cdot (\gamma \nabla \Phi)$, we observe that 
\[
Lb_0|_{x_n=0} = 2 \gamma (\xi_0' \cdot \nabla_{x'} b_0 + i\p_n b_0) + \eta b_0|_{x_n=0}. 
\]
Thus, in order to have $Lb_0|_{x_n=0} = 0$ we must have 
\[
\p_n b_0(x',0) = -\frac{1}{2i \gamma(x',0)} \left[ 2 \gamma(x',0) \xi_0' \cdot \nabla_{x'} b_0 + \eta b_0 \right] \Big|_{x_n=0}.
\]
We prescribe $\p_n b_0(x',0)$ to have the above value (which depends on the already prescribed quantity $b_0(x',0)$). Next we compute 
\[
\p_n (Lb_0)|_{x_n=0} = 2 \gamma i \p_n^2 b_0 + Q(x', b_0(x',0), \p_n b_0(x',0))
\]
where $Q$ depends on the already prescribed quantities $b_0(x',0)$ and $\p_n b_0(x',0)$. We thus set 
\[
\p_n^2 b_0(x',0) = -\frac{1}{2i\gamma(x',0)} Q(x', b_0(x',0), \p_n b_0(x',0)),
\]
which ensures that $\p_n (Lb_0)|_{x_n=0} = 0$. Continuing in this way and using Borel summation, we obtain a function $b_0$ so that $Lb_0 = 0$ to infinite order at $\{ x_n = 0 \}$. The other equations in \eqref{lbone} are solved in a similar way, which gives the required functions $b_{-1}, \ldots, b_{-N}$. In the construction, we may arrange so that \eqref{bzeroone} and \eqref{bsupport} are valid.

If $\Phi$ and $b_{-j}$ are chosen in the above way, then \eqref{p_eilambdaphi_formulatwo} implies that 
\[
P(e^{i\lambda \Phi} b) = e^{i\lambda \Phi}\left[ \lambda q_1(x)   + \sum_{j=0}^{N} \lambda^{-j} q_{-j}(x) + \lambda^{-N} Pb_{-N} \right]
\]
where each $q_j(x)$ vanishes to infinite order at $x_n = 0$ and is compactly supported in  $Q_{\eps}$. Thus, for any $k \geq 0$ there is $C_k > 0$ so that $\abs{q_j} \leq C_k x_n^k$ in $Q_{\eps}$, and consequently 
\[
\abs{P(e^{i\lambda \Phi} b)} \leq e^{-\lambda \im(\Phi)} \left[ \lambda C_{k,N} x_n^k + C_N \lambda^{-N} \right].
\]
Since $\im(\Phi) = x_n$ in $Q_{\eps}$ we have 
\begin{align*}
\norm{P(e^{i\lambda \Phi} b)}_{L^2(\Omega)}^2 &\leq C_{k,N} \int_{Q_{\eps}} e^{-2\lambda x_n} \left[ \lambda^2 x_n^{2k} + \lambda^{-2N} \right] \,dx \\
 &\leq C_{k,N} \int_{\abs{x'} < \eps} \int_0^{\infty} e^{-2 x_n} \left[ \lambda^{1-2k} x_n^{2k} + \lambda^{-1-2N} \right] \,dx_n \,dx'.
\end{align*}
Choosing $k=N+1$ and computing the integrals over $x_n$, we get that 
\[
\norm{P(e^{i\lambda \Phi} b)}_{L^2(\Omega)}^2 \leq C_N \lambda^{-2N-1}.
\]
It is also easy to compute that 
\[
\norm{e^{i\lambda \Phi} b}_{H^1(\Omega)} \sim \lambda^{1/2}.
\]
Thus, choosing $a = \lambda^{-1/2} b$, we have proved all the claims except \eqref{boundary_determination_limit}.

To show \eqref{boundary_determination_limit}, we observe that 
\[
\nabla v = e^{i\lambda \Phi} \left[ i\lambda (\nabla \Phi) a + \nabla a \right].
\]
Using a similar formula for $\tilde{v} = e^{i\lambda \Phi} \tilde{a}$ (where $\Phi$ is independent of the conductivity), we have 
\[
\mathrm{dist}(x,\p \Omega)^k f \nabla v \cdot \ol{\nabla \tilde{v}} = x_n^k f e^{-2\lambda x_n} \left[ \lambda^2 \abs{\nabla \Phi}^2 a \ol{\tilde{a}} + \lambda^1 [ \cdots ] + \lambda^0 [ \cdots ]  \right].
\]
Now $\abs{\nabla \Phi}^2 = 2$ and $a = \lambda^{-1/2} b$ where $\abs{b} \lesssim 1$, and similarly for $\tilde{a}$. Hence 
\begin{align*}
 &\lambda^k \int_{\Omega} \mathrm{dist}(x,\p \Omega)^k f \nabla v \cdot \ol{\nabla \tilde{v}} \,dx \\
 &= \lambda^{k+1} \int_{\mR^{n-1}} \int_0^{\infty} x_n^k e^{-2\lambda x_n} f \left[ 2 b \ol{\tilde{b}} + O_{L^{\infty}}(\lambda^{-1}) \right] \,dx_n \,dx'.
\end{align*}
We can change variables $x_n \to x_n/\lambda$ and use dominated convergence to take the limit as $\lambda \to \infty$. The limit is 
\[
c_k \int_{\mR^{n-1}} f(x',0) b_0(x',0) \ol{\tilde{b}_0(x',0)} \,dx' = c_k \int_{\mR^{n-1}} f(x',0) \abs{\chi(x')}^2 \,dx'
\]
where $c_k = 2 \int_0^{\infty} x_n^k e^{-2x_n} \,dx_n \neq 0$.

The proof is complete in the case when $x_0 = 0$ and $\p \Omega$ is flat near $0$. In the general case, we choose boundary normal coordinates $(x',x_n)$ so that $x_0$ corresponds to $0$ and $\Omega$ near $x_0$ locally corresponds to $\{ x_n > 0 \}$. The equation $\nabla \cdot (\gamma \nabla u) = 0$ in the new coordinates becomes an equation 
\[
\nabla \cdot (\gamma A \nabla u) = 0 \text{ in $\{ x_n > 0 \}$}
\]
where $A$ is a smooth positive matrix only depending on the geometry of $\Omega$ near $x_0$. The construction of $v$ now proceeds in a similar way as above, except that the equation \eqref{ptwo_phi_equation} for the phase function $\Phi$ can only be solved to infinite order on $\{ x_n = 0 \}$ instead of solving it globally in $\Omega$.
\end{proof}

\subsection{Interior determination} \label{subseq_calderon_interior}

Assume that $(M,g)$ is a compact manifold with smooth boundary, and let $q \in C^{\infty}(M)$ be a potential. Consider the Dirichlet problem 
\begin{equation} \label{conductivity_dp}
\left\{ \begin{array}{rll}
(-\Delta_g + q) u &\!\!\!= 0 & \quad \text{in } M, \\
u &\!\!\!= f & \quad \text{on } \partial M.
\end{array} \right.
\end{equation}
We assume that $0$ is not a Dirichlet eigenvalue. Then for any $f \in C^{\infty}(\p M)$ there is a unique solution $u \in C^{\infty}(M)$. The boundary measurements are given by the (elliptic) DN map 
\[
\Lambda_q: C^{\infty}(\p M) \to C^{\infty}(\p M), \ \ \Lambda_q f = \p_{\nu} u|_{\p M}.
\]
The Calder\'on problem in this setting is to determine the potential $q$ from the knowledge of the DN map $\Lambda_q$, when the metric $g$ is known.

Let us now sketch the proof of Theorem \ref{thm_calderon}. The general scheme will be exactly the same as in the proof of Theorem \ref{thm_gelfand_xray} in the wave equation case, but with a few important differences. The proof proceeds in four steps:

\begin{enumerate}
\item[1.] 
Derivation of an integral identity showing that if $\Lambda_{q_1}=\Lambda_{q_2}$, then $q_1-q_2$ is $L^2$-orthogonal to certain products of solutions.
\item[2.] 
Construction of special solutions that concentrate near \emph{two-dimensional manifolds} $\mR \times \gamma$ where $\gamma$ is a maximal geodesic in $M_0$.
\item[3.] 
Inserting the special solutions in the integral identity and taking a limit, in order to recover integrals over geodesics.
\item[4.]
Inversion of the geodesic X-ray transform to prove that $q_1=q_2$.
\end{enumerate}

%\subsection{Integral identity}

The first step, the integral identity, is completely analogous to the wave equation case.

\begin{lemma}[Integral identity] \label{lemma_calderon_integral_identity_riem}
Let $(M,g)$ be a compact manifold with boundary and let $q_1, q_2 \in C^{\infty}(M)$. If $f_1, f_2 \in C^{\infty}(\p M)$, then 
\[
((\Lambda_{q_1} - \Lambda_{q_2}) f_1, f_2)_{L^2(\p M)} = \int_{M} (q_1 - q_2)  u_1  \bar{u}_2 \,dV
\]
where $u_j \in C^{\infty}(M)$ solves $(-\Delta+q_j)u_j = 0$ in $M$ with $u_j|_{\p M} = f_j$.
\end{lemma}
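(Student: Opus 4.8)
The plan is to mimic exactly the proofs of Lemmas \ref{lemma_gelfand_integral_identity} and \ref{lemma_calderon_integral_identity}: first establish that the elliptic DN map $\Lambda_q$ is (formally) self-adjoint, then write both sides of the claimed identity as interior integrals and subtract, so that the Dirichlet-energy terms cancel.

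First I would record the symmetry of the DN map. Let $q \in C^{\infty}(M)$ with $0$ not a Dirichlet eigenvalue of $-\Delta_g + q$, so that for $f, h \in C^{\infty}(\p M)$ there are unique solutions $u, v \in C^{\infty}(M)$ of $(-\Delta_g+q)u = 0$, $(-\Delta_g+q)v = 0$ with $u|_{\p M} = f$, $v|_{\p M} = h$. Applying the Green formula from the Notation section to $u$ and $\bar v$ gives
\[
(\Lambda_q f, h)_{L^2(\p M)} = \int_{\p M} (\p_{\nu} u)\bar v\,dS_g = \int_M \big( (\Delta_g u)\bar v + \langle \nabla_g u, \nabla_g \bar v\rangle_g\big)\,dV_g = \int_M \big( \langle \nabla_g u, \nabla_g \bar v\rangle_g + q u \bar v\big)\,dV_g,
\]
using $\Delta_g u = q u$ in the last step. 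Since $q$ is real valued, the right-hand side is conjugate-symmetric under swapping $(u,f) \leftrightarrow (v,h)$, whence $(\Lambda_q f, h)_{L^2(\p M)} = (f, \Lambda_q h)_{L^2(\p M)}$.

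Then I would apply this twice. With $u_1, u_2$ as in the statement, the computation above with $q = q_1$, $u = u_1$, $v = u_2$ gives $(\Lambda_{q_1} f_1, f_2)_{L^2(\p M)} = \int_M (\langle \nabla_g u_1, \nabla_g \bar u_2\rangle_g + q_1 u_1 \bar u_2)\,dV_g$. For the other term, self-adjointness gives $(\Lambda_{q_2} f_1, f_2)_{L^2(\p M)} = (f_1, \Lambda_{q_2} f_2)_{L^2(\p M)}$, and running the same computation with $q = q_2$ but with the roles of the two solutions interchanged (test the equation for $u_2$ against $\bar u_1$ and take conjugates, using that $q_2$ is real) yields $(\Lambda_{q_2} f_1, f_2)_{L^2(\p M)} = \int_M (\langle \nabla_g u_1, \nabla_g \bar u_2\rangle_g + q_2 u_1 \bar u_2)\,dV_g$. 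Subtracting the two identities cancels the Dirichlet-energy terms and leaves $((\Lambda_{q_1} - \Lambda_{q_2})f_1, f_2)_{L^2(\p M)} = \int_M (q_1 - q_2) u_1 \bar u_2\,dV_g$, as claimed.

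There is essentially no hard step here; the only points requiring care are the bookkeeping of complex conjugates — harmless because $q_1, q_2$ are real — and the standing assumption that $0$ is not a Dirichlet eigenvalue for $-\Delta_g + q_j$, which guarantees that the solutions $u_j$ in the statement exist and are smooth up to the boundary. If one prefers to bypass the self-adjointness detour, one can instead integrate by parts directly in $\int_M (q_1 - q_2) u_1 \bar u_2\,dV_g = \int_M \big((\Delta_g u_1)\bar u_2 - u_1\,\overline{\Delta_g u_2}\big)\,dV_g$ and apply the Green formula twice, which reproduces $(\Lambda_{q_1} f_1, f_2)_{L^2(\p M)} - (f_1, \Lambda_{q_2} f_2)_{L^2(\p M)}$ and hence the same conclusion once self-adjointness of $\Lambda_{q_2}$ is used once more.
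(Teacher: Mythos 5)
Your proof is correct and follows the same route as the paper: establish symmetry of $\Lambda_q$ via the Green formula, rewrite $(\Lambda_{q_1}f_1,f_2)$ and $(\Lambda_{q_2}f_1,f_2)=(f_1,\Lambda_{q_2}f_2)$ as interior Dirichlet-energy integrals, and subtract so that the gradient terms cancel. Your closing remark about the standing assumption that $0$ is not a Dirichlet eigenvalue is a reasonable point of care, and the alternative direct integration-by-parts route you sketch is just the same computation reorganized.
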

\begin{proof}
We first observe that the DN map is symmetric: if $q \in C^{\infty}(M)$ is real valued and if $u_f$ solves $(-\Delta+q)u_f = 0$ in $M$ with $u_f|_{\p M} = f$, then an integration by parts shows that 
\begin{align*}
(\Lambda_{q} f, g)_{L^2(\p M)} &= \int_{\p M} (\p_{\nu} u_f) \ol{u}_g \,dS = \int_{M} (\langle \nabla u_f, \nabla \ol{u}_g \rangle + (\Delta u_f) \ol{u}_g) \,dV \\
 &= \int_{M} (\langle \nabla u_f, \nabla \ol{u}_g \rangle + q u_f \ol{u}_g) \,dV \\
 &= \int_{\p M} u_f \ol{ \p_{\nu} u_g} \,dS = (f, \Lambda_{q} g)_{L^2(\p M)}.
\end{align*}
Thus 
\begin{align*}
(\Lambda_{q_1} f_1, f_2)_{L^2(\p M)} &= \int_{M} (\langle \nabla u_1, \nabla \ol{u}_2 \rangle + q_1 u_1 \ol{u}_2) \,dV, \\
(\Lambda_{q_2} f_1, f_2)_{L^2(\p M)} &= (f_1, \Lambda_{q_2} f_2)_{L^2(\p M)} = \int_{M} (\langle \nabla u_1, \nabla \ol{u}_2 \rangle + q_2 u_1 \ol{u}_2) \,dV.
\end{align*}
The result follows by subtracting these two identities.
\end{proof}

By Lemma \ref{lemma_calderon_integral_identity_riem}, if $\Lambda_{q_1} = \Lambda_{q_2}$, then 
\begin{equation} \label{elliptic_orthogonality}
\int_{M} (q_1 - q_2)  u_1  \bar{u}_2 \,dV = 0
\end{equation}
for any solutions $u_j \in C^{\infty}(M)$ with $(-\Delta + q_j) u_j = 0$ in $M$.

\subsection{Special solutions -- complex geometrical optics} \label{sec_cgo}
We will next construct special solutions to the equation $(-\Delta + q) u = 0$. For simplicity we will do this assuming that the transversal manifold $(M_0,g_0)$ is \emph{simple}. This will make it possible to solve the eikonal equation globally, as in Section \ref{sec_wave_simple}. In the general case one can instead use a Gaussian beam construction as in Section \ref{subseq_gaussian_beam}.

Just like for the wave equation, we start with the geometric optics ansatz 
\begin{equation} \label{cgo_ansatz}
v(t,x) = e^{i\lambda \Phi(t,x)} a(t,x)
\end{equation}
where $\lambda \in \mR$ is a large parameter, $\Phi$ is a \emph{complex valued} phase function, and $a$ is an amplitude. The fact that the equation is elliptic requires us to use complex phase functions, and the corresponding solutions are called \emph{complex geometrical optics solutions}.

The construction of special solutions is similar to the wave equation case as in Proposition \ref{prop_gelfand_concentrating_solutions}. However, it has the following important differences which are consistent with the Wick rotation $t \mapsto it$:
\begin{itemize}
\item 
The phase function $\Phi$ solves the \emph{complex eikonal equation} 
\[
\langle \nabla_x \Phi, \nabla_x \Phi \rangle_{g_0} + (\p_t \Phi)^2 = 0,
\]
instead of $\abs{\nabla_x \varphi}_g^2 - (\p_t \varphi)^2 = 0$. The phase function $\Phi(x,t) = it - r$ is complex valued, instead of being real valued as in $\varphi(x,t) = t- r$.
\item 
The amplitude solves a \emph{complex transport equation} 
\[
2(\p_r + i \p_t)a + ba = 0,
\]
which has solutions concentrating near two-manifolds, instead of solving a real transport equation $2(\p_r + \p_t)a_0 + ba_0 = 0$ which has solutions concentrating near curves.
\item 
The solutions concentrate near two-dimensional manifolds $\mR \times \gamma$ where $\gamma$ is a maximal geodesic in $M_0$, instead of concentrating near curves $t \mapsto (\gamma(t), t+\sigma)$.
\item 
The approximate solutions $v = e^{i \lambda \Phi} a$ may grow exponentially in $\lambda$. Thus the exact solution $u = v + R$ cannot be constructed by solving a Dirichlet problem for $R$, but one must use a different solvability result (Carleman estimate).
\end{itemize}

\begin{remark}
Let us give a microlocal explanation of the argument above, following \cite{Salo_normal_forms}. As discussed in Remark \ref{rem_wave_sing},  propagation of singularities was responsible for the fact that one could construct special solutions for wave equations that concentrate near light rays. However, the equation $(-\Delta + q) u = 0$ in elliptic, so that singularities do not propagate (in fact any solution is $C^{\infty}$ by elliptic regularity) and one does not obtain special solutions in this way.

This can be remedied in the special case of transversally anisotropic manifolds, where $g = dt^2 + g_0$ has product form with coordinates $(t,x)$ and the Laplacian splits as $\Delta_g = \p_t^2 + \Delta_{x}$. We can introduce an (artificial) parameter $h = 1/\lambda$ and consider the exponentially conjugated \emph{semiclassical} operator 
\[
P := e^{t/h} h^2 (-\Delta_g + q) e^{-t/h}.
\]
This operator is not anymore elliptic when considered as a semiclassical operator. It is a so called semiclassical \emph{complex principal type} operator, and singularities for such operators propagate along two-dimensional surfaces in phase space called \emph{bicharacteristic leaves}. This implies that one can construct special solutions $v$ for $P$ concentrating along two-dimensional manifolds. One then obtains solutions $u = e^{-t/h} v$ to the elliptic equation $(-\Delta_g + q) u = 0$, and these are precisely the complex geometrical optics solutions mentioned above.
\end{remark}

We now discuss the argument in more detail. After applying the operator $-\Delta+q = -\p_t^2 - \Delta_x + q(t,x)$ to the ansatz \eqref{cgo_ansatz}, we obtain a direct analogue of the wave equation computation \eqref{wave_equation_geometric_optics}:
\begin{multline}
(-\p_t^2-\Delta_x+q)(e^{i\lambda \Phi} a) = e^{i\lambda \Phi} \big[ \lambda^2 \left[ \langle \nabla_x \Phi, \nabla_x \Phi \rangle_{g_0} + (\p_t \Phi)^2 \right] a \\
 - i \lambda \left[2 \p_t \Phi \p_t a + 2 \langle \nabla_x \Phi, \nabla_x a \rangle + (\Delta_{t,x} \Phi)a \right] + (-\Delta_{t,x} + q)a \big]. \label{elliptic_geometric_optics}
\end{multline}
Recall from \eqref{eikonal1} that in the wave equation case, the eikonal equation was 
\[
\abs{\nabla_x \varphi}_g^2 - (\p_t \varphi)^2 = 0
\]
and we used the solution 
\[
\varphi = t - r
\]
where $(r, \omega)$ were Riemannian polar coordinates in a neighborhood $U$ of the simple manifold $M_0$, with center outside $M_0$. Recall also that we were interested in solutions that concentrate near the geodesic $\gamma: r \mapsto (r, \omega_0)$ in $M_0$, where $\omega_0$ is fixed. In the elliptic case, the eikonal equation appearing in the $\lambda^2$ term in \eqref{elliptic_geometric_optics} is 
\[
\langle \nabla_x \Phi, \nabla_x \Phi \rangle_{g_0} + (\p_t \Phi)^2 = 0 \text{ in $M$}.
\]
We obtain a solution by choosing 
\[
\Phi(t,x) := it - r.
\]
This is consistent with the Wick rotation $t \mapsto it$.

Having solved the eikonal equation, \eqref{elliptic_geometric_optics} becomes 
\[
(-\Delta+q)(e^{i\lambda \Phi} a) = e^{i\lambda \Phi}(-i\lambda La + (-\Delta+q)a),
\]
where $L$ is the complex first order operator 
\[
La := 2 (\p_r + i \p_t) a + b a
\]
with $b := \Delta_{t,x} \Phi$. Here $\p_r + i \p_t$ is a Cauchy-Riemann, or $\dbar$, operator. We wish to find an amplitude solving 
\[
La = 0 \text{ in $M$}.
\]
Using coordinates $(t, r, \omega)$ where $(r,\omega)$ are polar coordinates as above, we choose the solution 
\[
a(t,r,\omega) = c(t,r,\omega)^{-1} \chi(\omega)
\]
where $c$ is an integrating factor solving $2 (\p_r + i \p_t) c = bc$ (this amounts to solving a $\dbar$ equation in $\mR^2$), and $\chi \in C^{\infty}_c(S^{n-2})$ is supported near $\omega_0$.

We have produced a function $v = e^{i\lambda \Phi} a$ satisfying 
\[
(-\Delta + q)v = e^{i\lambda \Phi}(-\Delta+q)a.
\]
Moreover, $a$ is supported near the two-dimensional manifold $\{ (t, r, \omega_0) \}$, which corresponds to the set $\mR \times \gamma$ where $\gamma$ is a geodesic in $M_0$. As in Section \ref{sec_gelfand} one could try to find an exact solution $u = v + R$ of $(-\Delta+q) u = 0$ in $M$ by solving the Dirichlet problem 
\begin{equation} \label{elliptic_correction_term}
\left\{ \begin{array}{rll}
(-\Delta + q)R &\!\!\!= -e^{i\lambda \Phi}(-\Delta+q)a & \quad \text{in } M, \\
R &\!\!\!= 0 & \quad \text{on } \partial M.
\end{array} \right.
\end{equation}
Now \emph{if} $\Phi$ were real valued, the right hand side would be $O_{L^2(M)}(1)$ as $\lambda \to \infty$ and at least one would get a correction term $R = O_{L^2(M)}(1)$. This could be converted to an estimate $R = O_{L^2(M)}(\lambda^{-1})$ by working with an amplitude $a = a_0 + \lambda^{-1} a_{-1}$ as in Section \ref{sec_gelfand}.

However, the phase function is \emph{not} real valued and in fact $e^{i\lambda \Phi} = e^{-\lambda t} e^{-i\lambda r}$. Thus the right hand side above is in general only $O(e^{C \lambda})$, which is not good since we wish to take the limit $\lambda \to \infty$. Instead of solving the Dirichlet problem, we need to use a different solvability result.

\begin{proposition}[Carleman estimate] \label{prop_carleman}
Let $(M,g)$ be transversally aniso\-tropic and let $q \in C^{\infty}(M)$. There are $C, \lambda_0 > 0$ so that whenever $\abs{\lambda} \geq \lambda_0$ and $f \in L^2(M)$, there is a function $R \in H^1(M)$ satisfying 
\[
(-\Delta+q)(e^{i \lambda \Phi} R) = e^{i\lambda \Phi} f \text{ in $M$}
\]
such that 
\[
\norm{R}_{L^2(M)} \leq \frac{C}{\abs{\lambda}} \norm{f}_{L^2(M)}.
\]
\end{proposition}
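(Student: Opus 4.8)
The plan is to deduce the solvability statement from an \emph{a priori Carleman estimate} for the exponentially conjugated operator, via a standard duality argument, and to prove that estimate by a positive commutator computation. We work in the situation set up above, where the transversal manifold $(M_0,g_0)$ is simple, so that $r$ is the radial variable of global polar normal coordinates on a manifold containing $M_0$; in particular $\abs{\nabla_x r}_{g_0} = 1$ by Lemma \ref{lemma_polar}, and $r$ does not depend on $t$. Write $\lambda = 1/h$ (the case $\lambda < 0$ being symmetric, with $t$ replaced by $-t$) and set $\Psi := t + ir$, so that $\Phi = i\Psi$ and $e^{i\lambda\Phi} = e^{-\Psi/h}$. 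Since
\[
\langle \nabla\Psi, \nabla\Psi \rangle_g = \abs{\nabla t}_g^2 - \abs{\nabla_x r}_{g_0}^2 + 2i\langle \nabla t, \nabla_x r \rangle_g = 1 - 1 + 0 = 0,
\]
the function $\Psi$ solves the complex eikonal equation exactly, so the $h^0$ term in the conjugation cancels and a direct computation gives $e^{\Psi/h}\, h^2(-\Delta_g + q)\, e^{-\Psi/h} = P_\Psi$, where
\[
P_\Psi := -h^2\Delta_g + 2h\langle \nabla\Psi, \nabla\,\cdot\,\rangle_g + h(\Delta_g\Psi) + h^2 q .
\]
The desired equation $(-\Delta_g+q)(e^{i\lambda\Phi}R) = e^{i\lambda\Phi}f$ is thus equivalent to $P_\Psi R = h^2 f$, and the required bound becomes $\norm{R}_{L^2} \leq C h^{-1}\norm{h^2 f}_{L^2}$.

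By Hahn--Banach, applied to the functional $P_\Psi^* v \mapsto (v, h^2 f)_{L^2(M)}$ on the range of $P_\Psi^*$ acting on $C^\infty_c(M^{\mathrm{int}})$, it suffices to prove the a priori estimate
\[
\tfrac{h}{C}\,\norm{v}_{L^2(M)} \leq \norm{P_\Psi^* v}_{L^2(M)}, \qquad v \in C^\infty_c(M^{\mathrm{int}}), \quad 0 < h \leq h_0 .
\]
This yields $R \in L^2(M)$ with $P_\Psi R = h^2 f$ and the stated estimate; interior elliptic regularity (working, if needed, in a slightly larger manifold, or using a version of the Carleman estimate that also controls $\norm{h\nabla v}_{L^2}$) gives $R \in H^1(M)$. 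The adjoint $P_\Psi^*$ corresponds to the conjugate weight $\bar\Psi = t - ir$ and has the same structure, so it is enough to prove the estimate for $P_\Psi$. The zeroth order term is harmless, since $\norm{h^2 q v}_{L^2} \leq h^2\norm{q}_{L^\infty}\norm{v}_{L^2}$ is absorbed into $\tfrac{h}{C}\norm{v}_{L^2}$ once $h$ is small; so we may work with $P_0 := P_\Psi - h^2 q$.

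To prove the Carleman estimate I would split $P_0 = A + iB$ into its self- and skew-adjoint parts. Writing $\Psi = \varphi + ir$ with $\varphi(t,x) := t$, one finds $A = e^{ir/h}(-h^2\Delta_g)e^{-ir/h} - 1$ and $B = -2ih\,\p_t$, whence
\[
\norm{P_0 v}_{L^2}^2 = \norm{Av}_{L^2}^2 + \norm{Bv}_{L^2}^2 + (i[A,B]v, v)_{L^2} .
\]
\textbf{Here lies the main obstacle.} Because the metric $g = e \oplus g_0$ is independent of $t$, every term of $A$ commutes with $\p_t$, so $[A,B] = 0$ and the commutator contributes nothing; moreover $\norm{Av}^2 + \norm{Bv}^2$ by itself does not control $\norm{v}^2$, since $A$ is not elliptic and $A$, $B$ have a common characteristic set. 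This is exactly the borderline behaviour of the \emph{limiting Carleman weight} $\varphi(t,x) = t$, and the standard remedy is to \emph{convexify}: replace $\varphi$ by $\varphi_\eps := \varphi + \tfrac{h}{2\eps}\varphi^2$ for a small fixed $\eps > 0$. Then $\abs{\nabla\varphi_\eps}_g^2 = 1 + O(h)$ and $\langle \nabla\varphi_\eps, \nabla_x r\rangle_g = 0$, so the new $h^0$ term is only $O(h)$, while the second derivative of $\varphi_\eps$ produces a commutator term that is positive and, for $\eps$ chosen small depending only on the geometry, dominates the $O(h)$ error; this yields $\tfrac{h}{C}\norm{v}_{L^2} \leq \norm{P_{\Psi_\eps}v}_{L^2}$ for the operator conjugated by $\Psi_\eps = \varphi_\eps + ir$ (this is the Carleman estimate of \cite{DKSU}). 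Since $M$ is compact, $e^{\pm\varphi^2/(2\eps)}$ is bounded above and below on $M$, so this estimate transfers back, with a larger constant, to the weight $\Psi$, which completes the proof. Finally, I would remark that when $(M_0,g_0)$ is not simple the same scheme applies --- $\varphi = t$ remains a limiting Carleman weight --- except that the imaginary part $r$ of the phase must be replaced by a Gaussian beam phase solving the transversal eikonal equation to infinite order along a geodesic of $M_0$, as in Section \ref{subseq_gaussian_beam}.
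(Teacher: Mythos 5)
The paper does not prove this proposition but simply refers to \cite{DKSU}, and your sketch correctly reproduces the core of that argument: reduce via $\Psi = t + ir$ to the semiclassically conjugated operator (so the complex eikonal equation kills the $h^0$ term exactly), split into self- and skew-adjoint parts, note that $\varphi = t$ is a limiting Carleman weight in the product geometry so the commutator $[A,B]$ vanishes identically while $A$ and $B$ share a characteristic set, convexify $\varphi \mapsto \varphi + \tfrac{h}{2\eps}\varphi^2$ to produce a positive commutator, and deduce solvability by duality. The one cosmetic difference from \cite{DKSU} is that you include the unitary factor $e^{ir/h}$ inside the Carleman conjugation rather than treating $e^{ir/h}$ as part of the WKB amplitude and running the estimate with the real weight $\varphi$ alone; since unitary conjugation preserves $L^2$ norms, the two are equivalent, and your $A = e^{ir/h}(-h^2\Delta_g)e^{-ir/h} - 1$ is exactly the conjugate of the usual $A_\varphi = -h^2\Delta_g - \abs{\nabla\varphi}^2_g$. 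Your remarks on the $H^1$ upgrade, the $\lambda < 0$ case, and the non-simple transversal manifold are also correct.
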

\begin{proof}
See e.g.\ \cite{DKSU}.
\end{proof}

We can now use Proposition \ref{prop_carleman} to convert the approximate solution $v = e^{i\lambda \Phi} a$ to an exact solution 
\[
u = e^{i\lambda \Phi} (a + R)
\]
of $(-\Delta+q)u=0$ in $M$, so that $\norm{R}_{L^2(M)} \to 0$ as $\abs{\lambda} \to \infty$. When $\abs{\lambda}$ is large, the solution $u$ is concentrated near the two-dimensional manifold 
\[
(\mR \times \gamma) \cap M
\]
but may grow exponentially in $\lambda$. However, the integral identity in Lemma \ref{lemma_calderon_integral_identity_riem} involves the product of two solutions, and we may take another solution of the type $e^{-i\lambda \Phi}(\tilde{a} + \tilde{R})$ so that the exponential growth will be cancelled in the product. By choosing such solutions and letting $\lambda \to \infty$ in \eqref{elliptic_orthogonality}, we obtain that the integral of $q_1-q_2$ (extended by zero outside $M$) over the two-dimensional manifold $\mR \times \gamma$ vanishes:
\[
\int_{-\infty}^{\infty} \int_0^{\ell} (q_1-q_2)(t, \gamma(r)) \,dr \,dt = 0.
\]
This is true for any maximal geodesic $\gamma$ in $(M_0,g_0)$, and hence using the injectivity of the geodesic X-ray transform on $(M_0,g_0)$ would give that 
\[
\int_{-\infty}^{\infty} (q_1-q_2)(t,x) \,dt = 0 \qquad \text{for all $x \in M_0$.}
\]
This is not quite enough to conclude that $q_1=q_2$. However, we can introduce an additional parameter $\sigma \in \mR$, which is analogous to the time delay parameter in the wave equation case. This can be done by performing the above construction with \emph{slightly complex frequency} $\lambda + i \sigma$. One obtains the following result analogous to Proposition \ref{prop_gelfand_concentrating_solutions}:

\begin{proposition}[Concentrating solutions] \label{prop_elliptic_concentrating}
Let $(M,g)$ be a transversally anisotropic manifold and let $q_1, q_2 \in C^{\infty}(M)$. Assume that the transversal manifold $(M_0,g_0)$ is simple, and that $\gamma: [0,\ell] \to M_0$ is a maximal geodesic. There is $\lambda_0 > 0$ so that whenever $\abs{\lambda} \geq \lambda_0$ and $\sigma \in \mR$, there are solutions $u_1 = u_{1,\lambda}$ of $(-\Delta+q_1)u_1 = 0$ in $M$ and $u_2 = u_{2,-\lambda}$ of $(-\Delta+q_2)u_2 = 0$ in $M$ such that for any $\psi \in C^{\infty}_c(M^{\mathrm{int}})$ one has 
\begin{equation} \label{orthogonality_calderon}
\lim_{\lambda \to \infty} \int_M \psi u_1 \ol{u}_2 \,dV = \int_{-\infty}^{\infty} \int_0^{\ell} e^{-i\sigma(t+ir)} \psi(t, \gamma(r)) \,dr \,dt.
\end{equation}
\end{proposition}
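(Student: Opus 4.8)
The plan is to run the complex geometrical optics construction of Section~\ref{sec_cgo} with the \emph{slightly complex frequency} $\lambda + i\sigma$ in place of $\lambda$, and to keep track of the extra factor this produces in the limit. First I would set up the geometry: since $(M_0,g_0)$ is simple, extend $\gamma$ to a geodesic of a slightly larger open manifold $U$ with $M_0 \subset\subset U$ and put the centre of polar normal coordinates at a point $p \in U \setminus M_0$ lying on this extension, a small distance before $\gamma(0)$; by Lemma~\ref{lemma_simple_exp} the coordinates $(\omega,r)$ are then global on $U$, by Lemma~\ref{lemma_polar} the metric there is $g_0 = dr^2 \oplus h(\omega,r)$, and $\gamma$ corresponds to the coordinate ray $\{\omega = \omega_0\}$. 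On $M \subset \mR \times M_0$ we use $(t,\omega,r)$, in which $g = dt^2 \oplus dr^2 \oplus h$ and $dV_g = (\det h)^{1/2}\, dt\,dr\,d\omega$.

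Next I would construct two quasimodes concentrating on $\mR\times\gamma$. For $u_1$, take the phase $\Phi = it - r$: it solves the complex eikonal equation $\langle \nabla_x\Phi,\nabla_x\Phi\rangle_{g_0} + (\p_t\Phi)^2 = 0$ by Lemma~\ref{lemma_polar}, and by \eqref{elliptic_geometric_optics}, with $\lambda$ replaced by $\mu := \lambda + i\sigma$, the amplitude must solve the transport equation $La = 0$ with $L$ the $\dbar$-type operator of Section~\ref{sec_cgo} and $b = \Delta_{t,x}\Phi$ independent of $t$. I would take $a = c^{-1}\chi_\lambda(\omega)$, where $c$ is a \emph{real positive} integrating factor depending only on $(\omega,r)$ (an explicit power of $\det h$, so only an ODE in $r$ is needed, no genuine $\dbar$-equation) and $\chi_\lambda \in C^\infty_c(S^{n-2})$ is a $\lambda$-dependent cutoff, normalised in $L^2(d\omega)$ and concentrating at $\omega_0$ on scale $\eps(\lambda) = \lambda^{-\kappa}$ for a small $\kappa > 0$. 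For $u_2$ I would make the mirror choice, phase $\Psi = it + r$ and frequency $-\lambda$ (this is the construction ``run with $-\lambda$''); the point of this choice is that in the product $u_1 \bar u_2$ the $\lambda$-dependent exponential weights cancel \emph{exactly}, leaving $e^{i\mu\Phi}\,\overline{e^{-i\lambda\Psi}} = e^{-i\sigma(t+ir)}$ (up to a constant absorbed into normalisation), whose modulus $e^{\sigma r}$ is bounded on the compact set $M$. A direct computation as in Section~\ref{sec_wave_simple} then gives $(-\Delta+q_j)(e^{i(\cdots)\Phi_j}a_j) = e^{i(\cdots)\Phi_j} f_j$ with $\norm{f_j}_{L^2(M)} = O(\norm{\chi_\lambda}_{W^{2,\infty}}) = O(\lambda^{\kappa(n+2)/2})$.

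I would then upgrade the $v_j = e^{i(\cdots)\Phi_j}a_j$ to exact solutions via the Carleman estimate of Proposition~\ref{prop_carleman}. The only point to verify is that this estimate still holds with $\lambda$ replaced by the slightly complex $\lambda + i\sigma$ (for fixed $\sigma$, large $\lambda$): this is immediate since $e^{i\mu\Phi} = e^{i\lambda\Phi}e^{-\sigma\Phi}$ and $e^{-\sigma\Phi}$ is a fixed bounded smooth factor, so the conjugated operator changes only by terms of lower order in $\lambda$ (cf.\ \cite{DKSU}). This produces $R_j \in H^1(M)$ with $(-\Delta+q_j)(e^{i(\cdots)\Phi_j}(a_j+R_j)) = 0$ and $\norm{R_j}_{L^2(M)} \le \tfrac{C}{\abs{\lambda}}\norm{f_j}_{L^2(M)} = O(\lambda^{\kappa(n+2)/2 - 1})$, which tends to $0$ once $\kappa < 2/(n+2)$; set $u_1 := e^{i\mu\Phi}(a_1+R_1)$ and $u_2 := e^{-i\lambda\Psi}(a_2+R_2)$. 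To identify the limit \eqref{orthogonality_calderon}, expand $u_1\bar u_2 = e^{-i\sigma(t+ir)}(a_1+R_1)\overline{(a_2+R_2)}$; since the prefactor is bounded on $M$ and $\norm{a_j}_{L^2(M)} = O(1)$, every term involving some $R_j$ contributes $o(1)$, and the main term is $\int_M \psi\, e^{-i\sigma(t+ir)} a_1\bar a_2 \, dV_g$. A short computation (the metric densities occurring in $a_1\bar a_2$ and in $dV_g$ cancel, exactly as in Section~\ref{sec_wave_simple}) rewrites this as $\int\!\!\int\!\!\int \psi(t,r,\omega)\, e^{-i\sigma(t+ir)}\,\abs{\chi_\lambda(\omega)}^2\, dt\,dr\,d\omega$; letting $\lambda\to\infty$, $\abs{\chi_\lambda}^2\,d\omega$ converges weakly to the Dirac mass at $\omega_0$, and since $\psi$ is compactly supported and $\gamma$ corresponds to $\{\omega=\omega_0\}$ (leaving $M_0$ after arclength $\ell$, whence the range $[0,\ell]$ for $r$), the limit equals $\int_{-\infty}^{\infty}\int_0^{\ell} e^{-i\sigma(t+ir)}\psi(t,\gamma(r))\,dr\,dt$.

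The main obstacle I anticipate is the interplay between the cutoff scale and the remainder estimate: $\chi_\lambda$ must be narrow enough to localise onto $\mR\times\gamma$ in the limit, yet flat enough that the Carleman remainders $R_j$ — controlled by several $\omega$-derivatives of $\chi_\lambda$ — still vanish as $\lambda\to\infty$; and one must have checked that the exponential weights in $u_1$ and $\bar u_2$ cancel \emph{exactly}, so that these remainders are weighed against a bounded, not exponentially large, factor. Apart from this, and the elementary verification that the eikonal equation, the transport equation, and the Carleman estimate are all insensitive to the $i\sigma$-shift of the frequency, everything is parallel to the constructions already carried out in Sections~\ref{sec_wave_simple} and~\ref{sec_cgo}.
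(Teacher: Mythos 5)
Your proposal is essentially the construction the paper has in mind; the paper only sketches the argument in Section~\ref{sec_cgo} and does not write out a formal proof of Proposition~\ref{prop_elliptic_concentrating}, so your main contribution is to pin down the precise pairing of the two CGO solutions. Your key observation — take $u_1 = e^{i(\lambda+i\sigma)\Phi}(a_1+R_1)$ with $\Phi = it - r$ and $u_2 = e^{-i\lambda\Psi}(a_2+R_2)$ with $\Psi = it + r = -\ol{\Phi}$, so that $e^{i(\lambda+i\sigma)\Phi}\cdot\ol{e^{-i\lambda\Psi}} = e^{-\sigma\Phi} = e^{\sigma r - i\sigma t}$ has no $\lambda$-dependence at all — is exactly right, and it is actually slightly more careful than the paper's one-line suggestion to take ``another solution of the type $e^{-i\lambda\Phi}(\tilde a + \tilde R)$'': if one takes that literally and pairs it with the sesquilinear identity of Lemma~\ref{lemma_calderon_integral_identity_riem}, the product $u_1\ol{u}_2$ retains a factor $e^{i\lambda(\Phi+\ol{\Phi})} = e^{-2i\lambda r}$, which oscillates in $\lambda$ and would wash out the limit. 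Your choice makes both the modulus \emph{and} the oscillation cancel, which is what the stated limit \eqref{orthogonality_calderon} requires. The remaining ingredients — Lemma~\ref{lemma_simple_exp} and Lemma~\ref{lemma_polar} to get global polar coordinates, the integrating factor $c = (\det h)^{1/4}$ that solves the $t$-independent transport ODE and exactly cancels the volume density $(\det h)^{1/2}$ in $dV_g$, the $L^2(d\omega)$-normalised $\lambda$-dependent cutoff $\chi_\lambda$ converging weakly to $\delta_{\omega_0}$, and the application of the Carleman estimate (insensitive to the bounded perturbation $e^{-\sigma\Phi}$) to convert quasimodes to exact solutions — all match what the paper intends.

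Two small remarks. First, your bound $\norm{f_j}_{L^2(M)} = O(\norm{\chi_\lambda}_{W^{2,\infty}}) = O(\lambda^{\kappa(n+2)/2})$ does not exploit that $\chi_\lambda$ is supported on an $\omega$-set of measure $\sim\eps^{n-2}$; using this gives the sharper $\norm{f_j}_{L^2} = O(\eps^{-2}) = O(\lambda^{2\kappa})$ and the weaker constraint $\kappa<1/2$, but your $\kappa < 2/(n+2)$ is of course also sufficient. Second, there is a harmless constant-shift caused by placing the centre $p$ of polar coordinates at $\eta(-\delta)$ with $\delta > 0$: the limit you compute is $e^{\sigma\delta}$ times the right-hand side of \eqref{orthogonality_calderon}, which can be absorbed by rescaling $u_1$, and you should mention this normalisation if you want the identity to hold exactly as stated.
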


Theorem \ref{thm_calderon} now follows by inserting the solutions in Proposition \ref{prop_elliptic_concentrating} to the identity \eqref{elliptic_orthogonality}, taking the limit $\lambda \to \infty$, and using the Fourier transform in $t$ and injectivity of the geodesic X-ray transform in $(M_0,g_0)$ as in Exercise \ref{ex_wave}.

\subsection{Nonlinear equations} \label{subseq_calderon_nonlinear}

We will now consider the nonlinear equation 
\begin{equation} \label{semilinear_dp2}
\left\{ \begin{array}{rll}
-\Delta_g u + q u^3 &\!\!\!= 0 & \quad \text{in } M, \\
u &\!\!\!= f & \quad \text{on } \partial M
\end{array} \right.
\end{equation}
and the corresponding \emph{nonlinear DN map for small data} , 
\[
\Lambda_q^{\mathrm{NL}}: \{ f \in C^{\infty}(\p M) \;;\, \norm{f}_{C^{2,\alpha}(\p M)} < \delta \} \to C^{\infty}(\p M), \ \ \Lambda_q f = \p_{\nu} u|_{\p M}.
\]
We will prove Theorem \ref{thm_calderon_semilinear} which states that on transversally anisotropic manifolds, $\Lambda_{q_1}^{\mathrm{NL}} = \Lambda_{q_2}^{\mathrm{NL}}$ implies $q_1=q_2$.

A standard method for dealing with inverse problems for nonlinear equations is \emph{linearization}. Namely, if one knows the nonlinear DN map $\Lambda_q^{\mathrm{NL}}(f)$ for small $f$, then one also knows its linearization or Fr{\'e}chet derivative 
\[
(D\Lambda_q^{\mathrm{NL}})_0(h) = \p_{\eps} \Lambda_q^{\mathrm{NL}}(\eps h)|_{\eps=0}, \qquad h \in C^{\infty}(\p M).
\]
Let $u_{\eps}$ be the small solution of \eqref{semilinear_dp2} with boundary value $f = \eps h$, i.e.\ 
\begin{equation} \label{semilinear_dp3}
\left\{ \begin{array}{rll}
-\Delta_g u_{\eps} + q u_{\eps}^3 &\!\!\!= 0 & \quad \text{in } M, \\
u_{\eps} &\!\!\!= \eps h & \quad \text{on } \partial M.
\end{array} \right.
\end{equation}
Note that $u_0 = 0$, since $u = 0$ is the unique small solution with boundary value $0$. Formally differentiating \eqref{semilinear_dp3} in $\eps$ gives that 
\[
-\Delta_g (\p_{\eps} u_{\eps}) + 3 q u_{\eps}^2 \p_{\eps} u_{\eps} = 0.
\]
Setting $\eps = 0$ and using that $u_0 = 0$, we see that 
\[
v_h := \p_{\eps} u_{\eps}|_{\eps = 0}
\]
solves the linear equation 
\begin{equation} \label{linearized_dp}
\left\{ \begin{array}{rll}
-\Delta_g v_h &\!\!\!= 0 & \quad \text{in } M, \\
v_h &\!\!\!= h & \quad \text{on } \partial M.
\end{array} \right.
\end{equation}
Thus the linearized solution $v_h$ is just the harmonic function in $(M,g)$ with boundary value $h$. This formal computation can be justified. Since 
\[
(D\Lambda_q^{\mathrm{NL}})_0(h) = \p_{\eps} \Lambda_q^{\mathrm{NL}}(\eps h)|_{\eps=0} = \p_{\eps} \p_{\nu} u_{\eps}|_{\eps=0} = \p_{\nu} v_h,
\]
this leads to the following:

\begin{lemma}[Linearization of nonlinear DN map]
\[
(D\Lambda_q^{\mathrm{NL}})_0(h) = \Lambda_g h
\]
where $\Lambda_g$ is the DN map for the Laplace equation \eqref{linearized_dp}.
\end{lemma}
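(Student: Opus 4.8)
The plan is to justify the formal differentiation carried out above by working in Hölder spaces and exploiting the quantitative control on small solutions that comes with the Banach fixed point argument defining $\Lambda_q^{\mathrm{NL}}$. Fix $h \in C^\infty(\p M)$ and, for $\abs{\eps}$ small, let $u_\eps \in C^\infty(M)$ be the unique small solution of \eqref{semilinear_dp3}. The first thing I would record is the bound $\norm{u_\eps}_{C^{2,\alpha}(M)} \leq C\eps\norm{h}_{C^{2,\alpha}(\p M)}$ for $\abs{\eps}$ small; this is built into the contraction argument producing $u_\eps$, since the solution operator for the linear Dirichlet problem $-\Delta_g$ is bounded $C^{0,\alpha}(M)\times C^{2,\alpha}(\p M)\to C^{2,\alpha}(M)$ (the Dirichlet Laplacian has positive spectrum, so $0$ is not an eigenvalue) and $u\mapsto qu^3$ is locally Lipschitz on $C^{2,\alpha}(M)$ with small constant near $0$.

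Next I would compare $u_\eps$ with $\eps v_h$, where $v_h\in C^\infty(M)$ is the harmonic extension of $h$, i.e.\ the solution of \eqref{linearized_dp}. The difference $w_\eps := u_\eps - \eps v_h$ vanishes on $\p M$ and satisfies $-\Delta_g w_\eps = -qu_\eps^3$ in $M$. Since $C^{0,\alpha}(M)$ is a Banach algebra, $\norm{qu_\eps^3}_{C^{0,\alpha}(M)}\lesssim\norm{u_\eps}_{C^{2,\alpha}(M)}^3\lesssim\eps^3$, and the elliptic estimate for the Dirichlet problem gives $\norm{w_\eps}_{C^{2,\alpha}(M)}\lesssim\eps^3$. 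Hence $u_\eps = \eps v_h + O_{C^{2,\alpha}(M)}(\eps^3)$; in particular $\eps\mapsto u_\eps$ is differentiable at $\eps=0$ with $\p_\eps u_\eps|_{\eps=0}=v_h$.

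Finally, the normal-derivative trace $u\mapsto\p_\nu u|_{\p M}$ is bounded linear $C^{2,\alpha}(M)\to C^{1,\alpha}(\p M)$, so $\Lambda_q^{\mathrm{NL}}(\eps h)=\p_\nu u_\eps|_{\p M}=\eps\,\p_\nu v_h|_{\p M}+O(\eps^3)$, and differentiating at $\eps=0$ gives $(D\Lambda_q^{\mathrm{NL}})_0(h)=\p_\nu v_h|_{\p M}=\Lambda_g h$ by definition of the DN map for \eqref{linearized_dp}. As an alternative to this direct argument, one can set up $F(f,u):=(-\Delta_g u+qu^3,\,u|_{\p M}-f)$ as a smooth map between the relevant Hölder spaces, check that $D_uF(0,0)\colon v\mapsto(-\Delta_g v,\,v|_{\p M})$ is an isomorphism, and invoke the implicit function theorem to obtain a $C^1$ solution map $f\mapsto u_f$ with $u_0=0$ and linearization $h\mapsto v_h$; the chain rule applied to $\Lambda_q^{\mathrm{NL}}=(\,u\mapsto\p_\nu u|_{\p M}\,)\circ(\,f\mapsto u_f\,)$ then yields the same conclusion.

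I do not expect a serious obstacle here: the only technical ingredients are the small-solution bound $\norm{u_\eps}_{C^{2,\alpha}(M)}\lesssim\eps$ and the elliptic estimate for $w_\eps$, both routine once the function-space framework is fixed. The substance of the lemma is simply that a cubic nonlinearity vanishes to second order at $u=0$ and therefore contributes nothing to the first-order term of the DN map.
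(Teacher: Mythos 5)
Your argument is correct and essentially the same as the paper's: the paper carries out the formal differentiation of \eqref{semilinear_dp3} at $\eps=0$ and states that ``This formal computation can be justified,'' and you have supplied exactly that justification via the Schauder estimate for $w_\eps := u_\eps - \eps v_h$ (showing $u_\eps = \eps v_h + O_{C^{2,\alpha}(M)}(\eps^3)$) together with continuity of the normal-derivative trace. Your direct comparison with $\eps v_h$, rather than first proving abstract differentiability of the solution map, is a clean and self-contained way to obtain the needed Fr\'echet derivative.
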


This shows that from the knowledge of $\Lambda_q^{\mathrm{NL}}$, we can recover its linearization $(D\Lambda_q^{\mathrm{NL}})_0 = \Lambda_g$. However, this \emph{first linearization} does not contain any information about the unknown potential $q$. It turns out that for the nonlinearity $q u^3$, the right thing to do is to look at the \emph{third linearization}, i.e.\ the \emph{third order Fr{\'e}chet derivative}, $(D^3 \Lambda_q^{\mathrm{NL}})_0$.

The third linearization can be computed by considering Dirichlet data of the form $f = \eps_1 h_1 + \eps_2 h_2 + \eps_3 h_3$ where $h_j \in C^{\infty}(\p M)$ and $\eps_j > 0$ are small. Writing $\eps = (\eps_1,\eps_2,\eps_3)$, let $u_{\eps}$ be the solution of 
\begin{equation} \label{semilinear_dp4}
\left\{ \begin{array}{rll}
-\Delta_g u_{\eps} + q u_{\eps}^3 &\!\!\!= 0 & \quad \text{in } M, \\
u_{\eps} &\!\!\!= \eps_1 h_1 + \eps_2 h_2 + \eps_3 h_3 & \quad \text{on } \partial M.
\end{array} \right.
\end{equation}
We formally apply the derivative $\p_{\eps_1 \eps_2 \eps_3}$ to this equation to obtain 
\begin{align*}
0&=-\Delta_g (\p_{\eps_1 \eps_2 \eps_3} u_{\eps}) + q \p_{\eps_1 \eps_2 \eps_3}(u_{\eps}^3) \\
 &= -\Delta_g (\p_{\eps_1 \eps_2 \eps_3} u_{\eps}) + q \p_{\eps_1 \eps_2}(3 u_{\eps}^2 \p_{\eps_3} u_{\eps}) \\
 &= -\Delta_g (\p_{\eps_1 \eps_2 \eps_3} u_{\eps}) + q \p_{\eps_1}(6 u_{\eps} \p_{\eps_2} u_{\eps} \p_{\eps_3} u_{\eps} + 3 u_{\eps}^2 \p_{\eps_2 \eps_3} u_{\eps}) \\
 &= -\Delta_g (\p_{\eps_1 \eps_2 \eps_3} u_{\eps}) + 6q \p_{\eps_1} u_{\eps} \p_{\eps_2} u_{\eps} \p_{\eps_3} u_{\eps} + \ldots
\end{align*}
where $\ldots$ consists of terms that contain a power of $u_{\eps}$. Since $u_0 = 0$, when we set $\eps = 0$ all the terms in $\ldots$ will vanish. Thus 
\[
w := \p_{\eps_1 \eps_2 \eps_3} u_{\eps}|_{\eps=0}
\]
will solve the equation (recall that $v_{h_j} = \p_{\eps_j} u_{\eps}|_{\eps=0}$) 
\begin{equation} \label{third_linearized_dp}
\left\{ \begin{array}{rll}
-\Delta_g w &\!\!\!= - 6 q v_{h_1} v_{h_2} v_{h_3} & \quad \text{in } M, \\
w &\!\!\!= 0 & \quad \text{on } \partial M.
\end{array} \right.
\end{equation}
Now if the know the nonlinear DN map $\Lambda_q^{\mathrm{\mathrm{NL}}} (\eps_1 h_1 + \eps_2 h_2 + \eps_3 h_3) = \p_{\nu} u_{\eps}$, then we also know $\p_{\nu} w = \p_{\nu}  \p_{\eps_1 \eps_2 \eps_3} u_{\eps}|_{\eps = 0}$. Thus for any $h_4 \in C^{\infty}(\p M)$, we also know 
\[
\int_{\p M} (\p_{\nu} w) h_4 \,dS = \int_M ((\Delta_g w) v_{h_4} + \langle \nabla w, \nabla v_{h_4} \rangle_g) \,dV.
\]
Integrating by parts in the last term and using $w|_{\p M} = 0$ and $\Delta_g v_{h_4} = 0$, we obtain that 
\[
\int_{\p M} (\p_{\nu} w) h_4 \,dS = 6 \int_M q v_{h_1} v_{h_2} v_{h_3} v_{h_4} \,dV.
\]
Since $\p_{\nu} w$ is determined by $\Lambda_q^{\mathrm{\mathrm{NL}}}$, also the right hand side is determined by the map $\Lambda_q^{\mathrm{\mathrm{NL}}}$. (In fact one can verify that the left hand side is equal to $((D^3 \Lambda_q^{\mathrm{NL}})_0(h_1,h_2,h_3), h_4)_{L^2(\p M)}$, where $(D^3 \Lambda_q^{\mathrm{NL}})_0$ is the third Fr{\'e}chet derivative of $\Lambda_q^{\mathrm{NL}}$ considered as a trilinear form.) This formal argument can be justified and it leads to the following identity.

\begin{lemma}[Integral identity in nonlinear case] \label{lemma_identity_nonlinear}
If $\Lambda_{q_1}^{\mathrm{\mathrm{NL}}} = \Lambda_{q_2}^{\mathrm{\mathrm{NL}}}$, then 
\[
\int_M (q_1-q_2) v_{1} v_{2} v_{3} v_{4} \,dV = 0
\]
for any $v_j \in C^{\infty}(M)$ solving $\Delta_g v_j = 0$ in $M$.
\end{lemma}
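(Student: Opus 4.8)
The plan is to make rigorous the formal multilinearization argument sketched above. The first task is well-posedness and smooth dependence on the data for the nonlinear Dirichlet problem \eqref{semilinear_dp2}. Fix $\alpha \in (0,1)$ and consider the map $F : C^{2,\alpha}(\p M) \times C^{2,\alpha}(M) \to C^{\alpha}(M) \times C^{2,\alpha}(\p M)$, $F(f,u) = (-\Delta_g u + q u^3,\ u|_{\p M} - f)$. Since $0$ is not a Dirichlet eigenvalue of $-\Delta_g$, the partial differential $\p_u F(0,0) = (-\Delta_g,\ \cdot\,|_{\p M})$ is an isomorphism between the relevant Hölder spaces, so the implicit function theorem produces a neighborhood of $0$ and a solution operator $f \mapsto u_f$ which is smooth (in fact real-analytic, since the nonlinearity is polynomial) for $\norm{f}_{C^{2,\alpha}(\p M)} < \delta$. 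Elliptic regularity upgrades $u_f$ to $C^{\infty}(M)$ when $f$ is smooth. This justifies that $\eps = (\eps_1,\eps_2,\eps_3) \mapsto u_{\eps}$, with boundary data $\eps_1 h_1 + \eps_2 h_2 + \eps_3 h_3$, is a smooth $C^{2,\alpha}(M)$-valued map near $\eps = 0$, so all Fréchet derivatives used below exist and the term-by-term differentiation of the interior equation is legitimate.

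Next I would differentiate \eqref{semilinear_dp4} in $\eps$. Setting $v_{h_j} := \p_{\eps_j} u_{\eps}|_{\eps = 0}$ and using $u_0 = 0$, a single differentiation gives $-\Delta_g v_{h_j} = 0$ with $v_{h_j}|_{\p M} = h_j$, i.e.\ $v_{h_j}$ is the $g$-harmonic extension of $h_j$ (the linearization lemma). Differentiating three times in the distinct parameters $\eps_1,\eps_2,\eps_3$ and evaluating at $\eps = 0$, every term carrying a positive power of $u_{\eps}$ drops out, and $w := \p_{\eps_1 \eps_2 \eps_3} u_{\eps}|_{\eps = 0}$ solves \eqref{third_linearized_dp}, namely $-\Delta_g w = -6q\, v_{h_1} v_{h_2} v_{h_3}$ with $w|_{\p M} = 0$. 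The key point is that $\p_{\nu} w = \p_{\nu} \p_{\eps_1 \eps_2 \eps_3} u_{\eps}|_{\eps = 0} = \p_{\eps_1 \eps_2 \eps_3}\big(\Lambda_q^{\mathrm{NL}}(\eps_1 h_1 + \eps_2 h_2 + \eps_3 h_3)\big)\big|_{\eps = 0}$, which is determined by the map $\Lambda_q^{\mathrm{NL}}$ (it equals, up to a permutation-symmetrization, the third Fréchet derivative $(D^3 \Lambda_q^{\mathrm{NL}})_0(h_1,h_2,h_3)$).

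Then I would pair with a fourth harmonic function. For $h_4 \in C^{\infty}(\p M)$ with $g$-harmonic extension $v_{h_4}$, Green's formula together with $w|_{\p M} = 0$ and $\Delta_g v_{h_4} = 0$ gives
\[
\int_{\p M} (\p_{\nu} w) h_4 \,dS = \int_M (\Delta_g w)\, v_{h_4} \,dV = 6 \int_M q\, v_{h_1} v_{h_2} v_{h_3} v_{h_4} \,dV .
\]
The left-hand side depends only on $\Lambda_q^{\mathrm{NL}}$. Applying this with $q = q_1$ and $q = q_2$, and using that the harmonic extensions $v_{h_j}$ depend only on $(M,g)$ and not on the potential, the hypothesis $\Lambda_{q_1}^{\mathrm{NL}} = \Lambda_{q_2}^{\mathrm{NL}}$ forces $\int_M (q_1 - q_2)\, v_{h_1} v_{h_2} v_{h_3} v_{h_4} \,dV = 0$. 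Finally, any $v_j \in C^{\infty}(M)$ with $\Delta_g v_j = 0$ arises in this way upon taking $h_j := v_j|_{\p M}$, which yields the claimed identity.

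\textbf{Main obstacle.} The substantive part is the first paragraph: rigorously setting up the nonlinear solution operator, proving it is smooth, and justifying that mixed $\eps$-differentiation commutes with restriction to the boundary, so that $w$ genuinely solves \eqref{third_linearized_dp} and $\p_{\nu} w$ really equals the mixed derivative of $\Lambda_q^{\mathrm{NL}}$. Once this functional-analytic framework (inverse/implicit function theorem in Hölder spaces, plus elliptic regularity to pass to $C^{\infty}$) is in place, the remaining steps are the short integrations by parts indicated above.
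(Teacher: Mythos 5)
Your proof is correct and follows the same multilinearization argument that the paper sketches in the paragraphs immediately preceding the lemma: differentiate the nonlinear Dirichlet problem three times in the $\eps_j$, pair with a fourth harmonic function, and integrate by parts. The only difference is that you make explicit the step the paper dismisses with ``this formal argument can be justified,'' by invoking the implicit function theorem in H\"older spaces (plus Schauder theory and elliptic regularity) to establish smooth dependence of $u_{\eps}$ on $\eps$ and hence the legitimacy of the mixed Fr\'echet differentiation.
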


This integral identity related to the nonlinear equation $-\Delta_g u + q u^3 = 0$ has two benefits over the identity for the linear equation $-\Delta_g u + qu = 0$:
\begin{itemize}
\item 
$q_1-q_2$ is $L^2$-orthogonal to products of \emph{four} solutions, instead of products of two solutions;
\item 
the solutions $v_{j}$ are solutions of the Laplace equation $\Delta_g v_{j} = 0$, which does not contain the unknown potential $q$.
\end{itemize}

Let us finally sketch how one proves Theorem \ref{thm_calderon_semilinear} based on the integral identity in Lemma \ref{lemma_identity_nonlinear} and the construction of special solutions in Proposition \ref{prop_elliptic_concentrating}. The main point is that instead of considering a fixed geodesic $\gamma$ in $(M_0,g_0)$, one can consider \emph{two intersecting geodesics}.

Suppose that $\gamma_1$ and $\gamma_2$ are two maximal geodesics in $(M_0,g_0)$ that intersect only at one point $x_0 \in M_0$. We use Proposition \ref{prop_elliptic_concentrating} to find two harmonic functions $v_{\lambda}$ and $v_{-\lambda}$ in $M$ so that the product $v_{\lambda} \ol{v}_{-\lambda}$ is concentrated near $\mR \times \gamma_1$. We similarly choose two harmonic functions $w_{\lambda}$ and $w_{-\lambda}$ in $M$ so that the product $w_{\lambda} \ol{w}_{-\lambda}$ is concentrated near $\mR \times \gamma_2$. Then the product 
\[
v_{\lambda} \ol{v}_{-\lambda} w_{\lambda} \ol{w}_{-\lambda}
\]
is concentrated near the \emph{one-dimensional manifold} $\mR \times \{ x_0 \}$, and one has 
\[
0 = \lim_{\lambda \to \infty} \int_M (q_1-q_2) v_{\lambda} \ol{v}_{-\lambda} w_{\lambda} \ol{w}_{-\lambda} \,dV = \int_{-\infty}^{\infty} e^{-i \sigma t} (q_1-q_2)(t, x_0) \,dt.
\]
The point is that one has concentration at a \emph{single point} $x_0$ in $M_0$, instead of concentration near a fixed geodesic in $M_0$. It follows that the Fourier transform of $(q_1-q_2)(\,\cdot\,, x_0)$ vanishes identically for every $x_0 \in M_0$. This implies that $q_1=q_2$.

In general, given $x_0 \in M_0$ it may not be possible to find two finite length geodesics that only intersect at $x_0$. The possibility of multiple intersection points can be handled by introducing another extra parameter in the solutions, see \cite{LLLS} for details. This proves Theorem \ref{thm_calderon_semilinear} in general.

\begin{exercise}
Let $(M,g)$ be a compact Riemannian manifold with smooth boundary and let $\gamma \in C^{\infty}(M)$ be positive. Show that setting $v = \gamma^{-1/2} u$ reduces the conductivity equation $\mathrm{div}_g(\gamma \nabla_g v) = 0$ to the Schr\"odinger equation $(-\Delta_g + q)u = 0$ where $q = \frac{\Delta_g(\gamma^{1/2})}{\gamma^{1/2}}$.
\end{exercise}

\begin{exercise}
Compute the DN map for the Laplace equation in the unit disk $\mathbb{D} \subset \mR^2$ in terms of the Fourier expansion of the Dirichlet data $f$.
\end{exercise}

\begin{exercise}
Let $(M,g)$ be a simply connected two-dimensional manifold with boundary. Characterize all complex functions $\Phi \in C^{\infty}(M)$ that solve the complex eikonal equation $\langle \nabla_g \Phi, \nabla_g \Phi \rangle_g = 0$ in $M$, where $\langle \,\cdot\,, \,\cdot\, \rangle_g$ is understood as a complex bilinear (instead of sesquilinear) form. (Hint: Theorem \ref{thm_isothermal} may be helpful if you are not familiar with Riemannian geometry.)
\end{exercise}

\begin{exercise}
Let $(M,g)$ be transversally anisotropic with simple transversal manifold $(M_0,g_0)$. Let $\psi \in C^{\infty}_c(M^{\mathrm{int}})$, and assume that the right hand side of \eqref{orthogonality_calderon} vanishes for any $\sigma \in \mR$ and any maximal geodesic $\gamma$ in $(M_0,g_0)$. Prove that $\psi = 0$.
\end{exercise}

\addtocontents{toc}{\SkipTocEntry}

\end{document}